\documentclass[11pt]{amsart}
\usepackage{verbatim,amssymb,amsmath,graphicx,hyperref}
\setlength{\textwidth}{16cm} \setlength{\voffset}{1.25cm}
\addtolength{\headheight}{3.5pt} \frenchspacing \scrollmode

\addtolength{\hoffset}{-1.75cm} \addtolength{\voffset}{-0.75cm}

\newcommand{\R}{{\mathbb R}}
\newcommand{\N}{{\mathbb N}}

\newcommand{\EE}{{\mathbb E}}
\newcommand{\PP}{{\mathbb P}}

\newcommand{\sgn}{\operatorname{sgn}}
\newcommand{\eps}{\varepsilon}
\newcommand{\mmu}{\mu}

\newcommand{\F}{{\mathcal F}}

\theoremstyle{plain}
\newtheorem{theorem}{Theorem}
\newtheorem{prop}{Proposition}
\newtheorem{lemma}{Lemma}

\theoremstyle{definition}

\begin{document}

\title[Lower error bounds for strong approximation of SDEs with discontinuous drift]{Sharp lower error bounds for strong approximation of SDEs with discontinuous drift coefficient by coupling of noise}

\author[M\"uller-Gronbach]
{Thomas M\"uller-Gronbach}
\address{
Faculty of Computer Science and Mathematics\\
University of Passau\\
Innstrasse 33 \\
94032 Passau\\
Germany} \email{thomas.mueller-gronbach@uni-passau.de}

\author[Yaroslavtseva]
{Larisa Yaroslavtseva}
\address{
Faculty of Mathematics and Economics \\
University of Ulm\\
Helmholzstrasse 18 \\
89069 Ulm\\
Germany} \email{larisa.yaroslavtseva@uni-ulm.de}

\begin{abstract}
In the past decade, an intensive study of strong approximation of stochastic differential equations (SDEs) with  a drift coefficient that has discontinuities in space has begun. In the majority of these results it is assumed that the drift coefficient satisfies piecewise regularity conditions and that the diffusion coefficient  is globally Lipschitz continuous and non-degenerate at the discontinuities of the drift coefficient. Under this type of assumptions the best $L_p$-error rate obtained so far for approximation of scalar 
SDEs at the final time  is $3/4$ in terms of the number of evaluations of the  
driving Brownian motion. 
In the present article we  
prove for the first time in the literature sharp lower error bounds
for such SDEs. 
We show that for a huge class of 
additive noise driven
SDEs of this type the $L_p$-error rate  $3/4$ can not be improved.

For the proof of this result we employ a novel technique
by studying equations with coupled noise:
we reduce the analysis of the $L_p$-error of an arbitrary approximation based on evaluation of the driving Brownian motion at finitely many times  to the analysis of the $L_p$-distance of two solutions of the same equation that are driven by Brownian motions that are coupled at the given time-points and independent, conditioned on their values at these points.
To obtain lower bounds for the latter quantity, we prove a new quantitative version of positive association for bivariate normal 
random variables $(Y,Z)$ by providing explict lower bounds for the covariance $\text{Cov}(f(Y),g(Z))$ in case  of piecewise Lipschitz continuous functions $f$ and $g$. 
In addition it turns out that our proof technique also leads to  lower error bounds for estimating occupation time functionals $\int_0^1 f(W_t)\, dt$ of a Brownian motion $W$, which substantially extends known results for the case of $f$ being an indicator function. 
\end{abstract}
\maketitle

\section{Introduction and main results}\label{s3}

Consider a scalar autonomous stochastic differential equation (SDE)
\begin{equation}\label{sde0}
\begin{aligned}
dX_t & = \mu(X_t) \, dt + \sigma(X_t) \, dW_t, \quad t\in [0,1],\\
X_0 & = x_0
\end{aligned}
\end{equation}
with deterministic initial value $x_0\in\R$,  drift coefficient $\mu\colon\R\to\R$,  diffusion coefficient $\sigma\colon \R\to\R$ and $1$-dimensional 
standard
Brownian motion $W$. Assume that the SDE \eqref{sde0} has a unique strong solution $X$. In this article we  study 
$L_p$-approximation of $X_1$ by means of methods that  use finitely many evaluations of the driving Brownian motion $W$  in the case when the drift coefficient $\mu$  may have discontinuity points.

SDEs with a  drift coefficient that has discontinuities in space arise e.g. in mathematical finance, insurance and stochastic control problems.
In the past decade, an intensive study of strong approximation of such SDEs has begun. All investigations carried out so far study the performance of classical numerical methods for such equations or present new numerical methods and provide corresponding upper error bounds. See~\cite{ g98b, gk96b} for results on convergence in probability and  almost sure convergence  of the Euler-Maruyama scheme  and~\cite{DG18, GLN17, HalidiasKloeden2008,  LS16,  LS15b, LS18, MGY19b, MGY20, NS19, NSS19,  Tag16, Tag2017b, Tag2017a, PS19}
for results on $L_p$-approximation. In the present article we provide for the first time lower error bounds that are valid for any approximation of $X_1$ based on a finite number of evaluations of $W$ and are sharp for a huge class of 
additive noise driven
SDEs of this type.

To be more precise, consider the following conditions on the coefficients $\mu$ and $\sigma$. 
\begin{itemize}
\item[($\mu$1)] There exist $k\in\N$ 
and 
$-\infty=\xi_0<\xi_1<\ldots < \xi_k <\xi_{k+1}=\infty$ such that
 $\mu$ is Lipschitz continuous on  $(\xi_{i-1}, \xi_i)$ for every $i\in\{1, \ldots, k+1\}$, \vspace{0.1cm}
\item[($\sigma$1)] $\sigma$ is Lipschitz continuous on $\R$ and $\sigma(\xi_i) \neq 0$ for every $i\in\{1,\ldots,k\}$. \vspace{0.1cm}
\end{itemize}

If $\mu$ and $\sigma$ satisfy ($\mu$1) and ($\sigma$1), respectively, then 
the SDE~\eqref{sde0} has a unique strong solution $X$, see~\cite{LS16}.  $L_p$-approximation of $X_1$ under the assumptions 
 ($\mu$1) and ($\sigma$1)
has been studied in~\cite{ LS16,  LS15b, LS18, MGY20,  NSS19}. 
In particular, in~\cite{LS16, LS15b} the first numerical 
method
has been  
constructed  which achieves, under ($\mu$1) and ($\sigma$1), an $L_2$-error rate of at least $1/2$  in terms of the number of evaluations of $W$.   
This method is based on a suitable transformation of the strong solution $X$ into a strong solution
of an SDE with Lipschitz continuous coefficients. Thereafter,
 in~\cite{NSS19} an adaptive Euler-Maruyama scheme
has been
constructed, which achieves, under ($\mu$1) and ($\sigma$1), 
an $L_2$-error rate of at least $1/2-$ in terms of the average number of evaluations of $W$ used by this method. Finally, in~\cite{MGY20} it
has been 
proven that, 
under ($\mu$1) and ($\sigma$1), the standard  Euler-Maruyama 
scheme with $n$ equidistant steps
 in fact
achieves for all $p\in [1,\infty)$ an $L_p$-error rate of at least $1/2$  
in terms of the number $n$ of evaluations of $W$
as in the classical case of SDEs with 
globally Lipschitz continuous coefficients.

Recently in~\cite{MGY19b} the first higher-order method has been constructed for such SDEs, which
 achieves for all $p\in[1, \infty)$ an $L_p$-error rate $3/4$ if $\mu$ and $\sigma$ satisfy ($\mu$1) and ($\sigma$1) and additionally the following  piecewise regularity assumptions \vspace{0.1cm}
\begin{itemize}
\item[($\mu$2)]  $\mu$ has a Lipschitz continuous derivative on   $(\xi_{i-1}, \xi_i)$  for every $i\in\{1, \ldots, k+1\}$,\vspace{0.1cm}
\item[($\sigma$2)]  $\sigma$ has a Lipschitz continuous derivative on  $(\xi_{i-1}, \xi_i)$  for every $i\in\{1, \ldots, k+1\}$. \vspace{0.1cm}
\end{itemize}
More precisely, in~\cite{MGY19b} the following theorem has been proven.
\begin{theorem}\label{Thm0}
Assume that $\mu$ satisfies ($\mu$1) and ($\mu$2) and that $\sigma$ satisfies ($\sigma$1) and ($\sigma$2). Then there exist a sequence of measurable functions $g_n\colon \R^n\to \R$, $n\in\N$, such that for every $p\in [1,\infty)$
 there exists $c\in (0,\infty)$ such that for every $n\in\N$,
\[
\EE\bigl[|X_1-g_n(W_{1/n},W_{2/n},\dots,W_1)|^p\bigr]^{1/p} \le c/n^{3/4}.
\]
\end{theorem}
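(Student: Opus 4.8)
\medskip
\noindent\textbf{Proof proposal.}
The plan is to remove the discontinuities of $\mu$ by a pathwise transformation, apply a Milstein-type scheme to the resulting (globally Lipschitz, piecewise-smooth) SDE, and transfer the error estimate back to $X_1$ via Lipschitz continuity of the inverse transformation; the rate $3/4$ should come from balancing the Milstein rate $1$ away from the images of the discontinuities of $\mu$ against a crude local error of order $1/n$ on the $\asymp\sqrt n$ steps on which the transformed solution comes within one Brownian step of such a point, whose number is controlled by an occupation-time estimate. Concretely, using ($\mu$1), ($\mu$2), ($\sigma$1), ($\sigma$2) and following the transformation approach of~\cite{LS16,LS15b}, I would construct a $C^1$-diffeomorphism $G\colon\R\to\R$ equal to the identity outside a bounded neighbourhood of $\{\xi_1,\dots,\xi_k\}$, with $G'$ Lipschitz continuous and bounded away from $0$, chosen so that $G'\mu+\tfrac12 G''\sigma^2$ composed with $G^{-1}$ is Lipschitz continuous; by It\^o's formula $Z:=G(X)$ then solves
\begin{equation*}
dZ_t=\widetilde\mu(Z_t)\,dt+\widetilde\sigma(Z_t)\,dW_t,\qquad Z_0=G(x_0),
\end{equation*}
with $\widetilde\mu=(G'\mu+\tfrac12 G''\sigma^2)\circ G^{-1}$ and $\widetilde\sigma=(G'\sigma)\circ G^{-1}$ globally Lipschitz continuous, $C^1$ with Lipschitz derivatives on each interval $(\eta_{i-1},\eta_i)$ where $\eta_i:=G(\xi_i)$, and $\widetilde\sigma$ continuous with $\widetilde\sigma(\eta_i)=G'(\xi_i)\sigma(\xi_i)\neq0$. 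With $\eultr$ the Milstein scheme for this equation, $\eultr_0=G(x_0)$ and
\begin{equation*}
\eultr_{(j+1)/n}=\eultr_{j/n}+\tfrac1n\widetilde\mu(\eultr_{j/n})+\widetilde\sigma(\eultr_{j/n})\,\Delta_jW+\tfrac12(\widetilde\sigma\,\widetilde\sigma')(\eultr_{j/n})\bigl((\Delta_jW)^2-\tfrac1n\bigr),
\end{equation*}
$\Delta_jW:=W_{(j+1)/n}-W_{j/n}$ and $\widetilde\sigma'$ at the $\eta_i$ fixed arbitrarily, I set $g_n(W_{1/n},\dots,W_1):=G^{-1}(\eultr_1)$; since $G^{-1}$ is Lipschitz continuous it then suffices to show $\EE[|Z_1-\eultr_1|^p]^{1/p}\le c\,n^{-3/4}$ for all $p\in[1,\infty)$.

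\emph{Error analysis.}
I would first record the a priori bounds $\sup_{t\in[0,1]}\EE[|Z_t|^q]<\infty$ and $\sup_n\max_{0\le j\le n}\EE[|\eultr_{j/n}|^q]<\infty$ for all $q\in[1,\infty)$ (Gronwall and Burkholder--Davis--Gundy, using the linear growth of $\widetilde\mu$, $\widetilde\sigma$, $\widetilde\sigma\,\widetilde\sigma'$), the occupation-time bound $\EE[\int_0^1\ind_{\{|Z_t-\eta_i|\le\delta\}}\,dt]\le c\,\delta$ (from the occupation-times formula, $d\langle Z\rangle_t=\widetilde\sigma(Z_t)^2\,dt$ with $\widetilde\sigma$ bounded away from $0$ near $\eta_i$, and $\sup_{a\in\R}\EE[L_1^a(Z)]<\infty$, which follows from It\^o--Tanaka and the linear growth of $\widetilde\mu$), and, via a transition-density bound for a single Milstein step, $\PP(\min_i|\eultr_{j/n}-\eta_i|\le\delta)\le c\,\delta$ uniformly in $n$ and $j$. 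Writing $Z_1-\eultr_1$ as a sum of propagated one-step errors $e_j$, I would split each $e_j$ according to whether or not the exact solution started at time $j/n$ from $\eultr_{j/n}$ crosses a point of $\{\eta_1,\dots,\eta_k\}$ on $[j/n,(j+1)/n]$. On the no-crossing event the coefficients are $C^1$ with Lipschitz derivatives along the whole arc, so a Taylor expansion together with the identity $\int_{j/n}^{(j+1)/n}(W_s-W_{j/n})\,dW_s=\tfrac12((\Delta_jW)^2-\tfrac1n)$ shows that the leading $O(1/n)$ contribution of $e_j$ cancels, leaving an $L_2$-size $O(n^{-3/2})$ with a summably small conditional mean; summed over all $n$ steps this yields a contribution $O(1/n)$. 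On the crossing event a crude bound gives an $L_2$-size $O(1/n)$, and its probability given $\F_{j/n}$ is controlled, via the density bound and a dyadic decomposition of $\mathrm{dist}(\eultr_{j/n},\{\eta_1,\dots,\eta_k\})$ down to the Brownian step scale $n^{-1/2}$, so that the expected number of effectively crossing steps is $O(\sqrt n)$ and their total contribution is $O((\sqrt n\cdot n^{-2})^{1/2})=O(n^{-3/4})$. A Gronwall argument absorbs the stability terms, and the Burkholder--Davis--Gundy inequality upgrades from $p=2$ to general $p\in[1,\infty)$.

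\emph{Main obstacle.}
The delicate point is the sharp treatment of the crossing events, precisely because the relevant scale $n^{-1/2}$ equals the typical Brownian increment over one step: a plain union bound over the $\asymp n$ steps for ``$Z$ crosses some $\eta_i$ during step $j$'' loses a logarithmic factor, so the clean rate $n^{-3/4}$ requires the dyadic refinement above (or an occupation-time bound for the interpolated scheme) together with a genuine transition-density bound for $\eultr$ itself, not merely for the exact solution. A secondary nuisance is that $\widetilde\mu$, $\widetilde\sigma$ and the Milstein weight $\widetilde\sigma\,\widetilde\sigma'$ are only of linear growth and $\widetilde\sigma\,\widetilde\sigma'$ is itself discontinuous at the $\eta_i$, so the moment estimates and the error splitting must be organised consistently around the finite exceptional set.
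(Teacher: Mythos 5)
Your proposal follows essentially the same route the paper takes: the paper does not prove Theorem~\ref{Thm0} itself but cites~\cite{MGY19b}, describing exactly the strategy you outline --- a transformation $G$ removing the drift discontinuities, a Milstein-type (quasi-Milstein) scheme for the transformed SDE with globally Lipschitz, piecewise smooth coefficients, and back-transformation via the Lipschitz inverse $G^{-1}$. Your heuristic for the $3/4$ rate (full Milstein order away from the images $\eta_i$ of the discontinuities, balanced against occupation-time control of the steps near them) is consistent with the analysis in that reference, so the proposal is correct in approach and adds no genuinely different ideas.
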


  The approximations $g_n(W_{1/n},W_{2/n},\dots,W_1)$ in Theorem~\ref{Thm0} are obtained by applying a suitable transformation $G\colon\R\to\R$ to the strong solution $X$ of the SDE \eqref{sde0} such that the transformed solution $Y=(G(X_t))_{t\in[0,1]}$ is a strong solution of a new SDE with sufficiently regular coefficients. A Milstein-type scheme $\widehat Y_n$ with $n$ equidistant steps is then used to approximate $Y_1$ and $G^{-1}(\widehat Y_n)$ yields an approximation of $X_1$, which satisfies the upper error bound in Theorem~\ref{Thm0}. See~\cite[Section 4]{MGY19b} for details. We add that in~\cite{NS19} it has been proven that if  $\sigma=1$ and
$\mu$ is  bounded and piecewise $C^2_b$ then the standard Euler-Maruyama scheme in fact achieves an $L_2$-error rate of  at least $3/4-$ in terms of the number of evaluations of $W$. Note that in the latter case the Euler-Maruayama scheme coincides with the Milstein scheme.

It is well known that in the classical case of globally Lipschitz continuous coefficients $\mu$ and $\sigma$, the Milstein scheme achieves  
      for all $p\in[1, \infty)$ an $L_p$-error rate of at least $1$ 
in terms of the number of evaluations of $W$ under the additional regularity assumption that $\mu$ and $\sigma$ have  bounded and Lipschitz continuous derivatives,
see e.g.~\cite{HMGR01}. 
 It is therefore natural to ask    whether 
 there exists a method based on finitely many evaluations of $W$
 that achieves under the assumptions ($\mu$1), ($\mu$2) and ($\sigma$1), ($\sigma$2) a better $L_p$ error rate than the rate $3/4$ guaranteed by Theorem~\ref{Thm0}.
  To the best of our knowledge the answer to this question
was not known  in the literature up to now. In the present article we answer this question in the negative. More precisely, we show that no numerical method based on $n$ evaluations of $W$
can achieve an $L_p$-error rate better than $3/4$ in terms of $n$ if $\sigma=1$ and $\mu$ satisfies, additionally to ($\mu$1) and ($\mu$2), the conditions
 \begin{itemize}
\item[($\mu$3)]  $\exists i\in\{1,\dots,k\}\colon\, \mu(\xi_i+)\not=\mmu(\xi_i-)$, \vspace{0.1cm}
\item[($\mu$4)]  $\mu$ is increasing,\vspace{0.1cm}
\item[($\mu$5)]  $\mu$ is bounded. \vspace{0.1cm}
\end{itemize}
More formally, the main result of this article is the following theorem.

\begin{theorem}\label{Thm1}
Assume that $\mu$ satisfies ($\mu$1) to ($\mu$5) and that $\sigma=1$. Then 
 there exists $c\in(0, \infty)$ such that for all $n\in\N$, 
\begin{equation}\label{l3}
\inf_{\substack{
       t_1,\dots ,t_n \in [0,1]\\
        g \colon \R^n \to \R \text{ measurable} \\
       }}	 \EE\bigl[|X_1-g(W_{t_1}, \ldots, W_{t_n})|\bigr]\geq \frac{c}{n^{3/4}}. 
\end{equation}
\end{theorem}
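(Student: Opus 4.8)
The plan is to reduce the lower bound for an arbitrary approximation $g(W_{t_1},\dots,W_{t_n})$ to a lower bound on the $L_1$-distance between two solutions of the SDE \eqref{sde0} driven by \emph{coupled} Brownian motions. Fix time points $0\le t_1<\dots<t_n\le 1$ (we may assume $t_n=1$, otherwise add the point $1$ at the cost of a constant factor). Let $W$ and $\widetilde W$ be two Brownian motions with $W_{t_i}=\widetilde W_{t_i}$ for all $i$, and such that, conditioned on the common values $W_{t_1},\dots,W_{t_n}$, the bridges of $W$ and $\widetilde W$ on the intervals $(t_{i-1},t_i)$ are independent. Denote by $X$ and $\widetilde X$ the corresponding strong solutions. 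Since $g(W_{t_1},\dots,W_{t_n})=g(\widetilde W_{t_1},\dots,\widetilde W_{t_n})$ almost surely, the triangle inequality gives
\[
\EE\bigl[|X_1-g(W_{t_1},\dots,W_{t_n})|\bigr]\ge \tfrac12\,\EE\bigl[|X_1-\widetilde X_1|\bigr],
\]
so it suffices to bound $\EE[|X_1-\widetilde X_1|]$ from below by $c\,n^{-3/4}$, uniformly in the choice of $t_1,\dots,t_n$.

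The next step is to analyze $X_1-\widetilde X_1$ via the integrated form of the SDE. Because $\sigma=1$ and $W,\widetilde W$ agree on the grid, on each subinterval $[t_{i-1},t_i]$ the difference of the noise increments telescopes and one is left with
\[
X_1-\widetilde X_1=\int_0^1\bigl(\mu(X_s)-\mu(\widetilde X_s)\bigr)\,ds.
\]
Here the key difficulty is that $\mu$ is discontinuous, so the integrand is not small just because $X_s-\widetilde X_s$ is small; rather it is precisely the discontinuities that generate a nontrivial contribution. The strategy is to localize near a discontinuity $\xi_i$ with $\mu(\xi_i+)\neq\mu(\xi_i-)$ (which exists by ($\mu$3)) and, using the Itô–Tanaka / occupation-time heuristic, to compare $\int_0^1(\mu(X_s)-\mu(\widetilde X_s))\,ds$ with the jump height times the difference of occupation times of $X$ and $\widetilde X$ near $\xi_i$. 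Since $\mu$ is increasing (($\mu$4)), the comparison principle forces $X$ and $\widetilde X$ to stay ordered relative to the solution of the Lipschitz part, and one obtains, up to lower-order corrections from the Lipschitz pieces of $\mu$ (controlled by Gronwall) and from the $\sigma$-transformation being trivial here,
\[
\EE\bigl[|X_1-\widetilde X_1|\bigr]\gtrsim (\mu(\xi_i+)-\mu(\xi_i-))\cdot\EE\Bigl[\Bigl|\int_0^1\bigl(\mathbf 1_{[\xi_i,\infty)}(X_s)-\mathbf 1_{[\xi_i,\infty)}(\widetilde X_s)\bigr)ds\Bigr|\Bigr].
\]
One then localizes further: restricting to the event that $X$ and $\widetilde X$ are close to $\xi_i$ on a subinterval $[t_{i_0-1},t_{i_0}]$ of length $\approx 1/n$ (possible with probability bounded away from $0$, uniformly, after choosing the subinterval adaptively since $\mu$ is bounded so $X$ has a density bounded below near $\xi_i$ on time intervals of order $1/n$), the comparison reduces to the analogous quantity for a Brownian motion on an interval of length $h\approx 1/n$, which scales like $h^{3/4}$ — giving a single term of size $n^{-3/4}$.

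The main obstacle — and the technical heart of the argument — is the lower bound on $\EE[|\int_0^h(\mathbf 1_{[0,\infty)}(B_s)-\mathbf 1_{[0,\infty)}(\widetilde B_s))ds|]$ for two coupled Brownian motions $B,\widetilde B$ that agree at the endpoints of $[0,h]$ and are conditionally independent bridges in between. This is exactly the "coupling of noise" estimate advertised in the abstract. The plan is to expand the $L_1$-norm via Jensen in the reverse direction: lower bound $\EE[|U-\widetilde U|]$ where $U=\int_0^h\mathbf 1_{[0,\infty)}(B_s)ds$, by controlling $\mathrm{Var}(U)$ from below and the anti-correlation $\mathrm{Cov}(U,\widetilde U)$ from above. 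Since $U$ and $\widetilde U$ share only their (Gaussian) endpoint values, conditioning on those reduces everything to computing $\mathrm{Cov}(f(Y),g(Z))$-type quantities for jointly Gaussian $(Y,Z)$ with $f,g$ occupation-type (piecewise Lipschitz) functionals; the new quantitative positive-association inequality mentioned in the abstract provides the explicit two-sided control needed to conclude that $\EE[(U-\widetilde U)^2]\gtrsim h^{3/2}$ while $\EE[|U-\widetilde U|]^2$ cannot be much smaller (using hypercontractivity / a reverse Hölder estimate for the bounded functional $U/h$ to pass from $L_2$ to $L_1$). Assembling the subinterval contribution, the reduction to $\widetilde X$, and the Gronwall control of the Lipschitz remainder then yields \eqref{l3}. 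I expect the delicate points to be (i) making the localization near $\xi_i$ uniform in $t_1,\dots,t_n$ and in $n$, and (ii) the sign/monotonicity bookkeeping that lets ($\mu$4) turn the occupation-time difference into a genuine lower bound rather than a cancellation.
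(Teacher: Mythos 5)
Your opening reduction (coupling $W$ and $\widetilde W$ at the grid points with conditionally independent bridges, and bounding the error from below by $\tfrac12\EE[|X_1-\widetilde X_1|]$) is exactly the paper's first step, and your identification of the quantitative positive-association inequality as the technical core is also on target. However, the middle of the argument contains a decisive quantitative error: you claim that after localizing to a single subinterval of length $h\approx 1/n$ one obtains ``a single term of size $n^{-3/4}$'' because the relevant occupation-time quantity ``scales like $h^{3/4}$''. This is impossible. Since $\mu$ is bounded, the contribution of any one subinterval satisfies $\bigl|\int_{t_{i-1}}^{t_i}(\mu(X_s)-\mu(\widetilde X_s))\,ds\bigr|\le 2\|\mu\|_\infty h$, so a single interval can never produce more than $O(1/n)$; likewise your claim $\EE[(U-\widetilde U)^2]\gtrsim h^{3/2}$ contradicts the trivial bound $|U-\widetilde U|\le h$, i.e.\ $\EE[(U-\widetilde U)^2]\le h^2\ll h^{3/2}$. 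The rate $3/4$ does not come from one interval: in the paper each interval contributes $\approx (t_i-t_{i-1})^{5/2}$ to $\EE[|X_1-\widetilde X_1|^2]$ (jump-size squared, times $(t_i-t_{i-1})^2$ from the two time integrals, times the probability $\approx\sqrt{t_i-t_{i-1}}$ of being near the discontinuity), and only the sum over $\sim n$ intervals yields $n^{-3/2}$ in $L_2$, hence $n^{-3/4}$. To make that summation legitimate one must show that the cross terms $\EE[(X_{t_{i-1}}-\widetilde X_{t_{i-1}})\int_{t_{i-1}}^{t_i}(\mu(X_s)-\mu(\widetilde X_s))\,ds]$ do not cancel the diagonal terms; this is precisely where ($\mu$4) enters, via conditioning on $(X_{t_{i-1}},\widetilde X_{t_{i-1}})$ and applying the comparison theorem to two solutions started at $y,\tilde y$ and driven by the \emph{same} Brownian motion. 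Your proposal never confronts this accumulation/non-cancellation issue.

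Two further gaps. First, your assertion that ``the comparison principle forces $X$ and $\widetilde X$ to stay ordered'' is false as stated: $X$ and $\widetilde X$ are driven by different (merely coupled) Brownian motions, so no pathwise ordering holds; and the Lipschitz part of $\mu$ cannot be discarded as a ``lower-order correction controlled by Gronwall'', because $|X_s-\widetilde X_s|$ is itself of order $n^{-3/4}$, making the Lipschitz contribution to $\int_0^1(\mu(X_s)-\mu(\widetilde X_s))\,ds$ of the \emph{same} order as the target lower bound; the paper avoids this by working with the squared $L_2$ quantity, where the covariance inequality (Lemma~\ref{TongIn}) handles the Lipschitz and jump parts simultaneously with the correct sign. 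Second, the passage from the $L_2$ lower bound to the $L_1$ statement is not a matter of hypercontractivity of a single bounded occupation functional: the paper interpolates $\EE[Z^2]\le \EE[|Z|]^{2/3}\EE[Z^4]^{1/3}$ for $Z=X_1-\widetilde X_1$ and needs the nontrivial upper bound $\EE[Z^4]\lesssim n^{-3}$, which is obtained from the known rate-$3/4$ \emph{upper} error bound for the Milstein-type scheme (Lemma~\ref{lab2}); some such global fourth-moment control is indispensable and is absent from your outline.
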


Note that Theorem~\ref{Thm1} also shows that the $L_p$-error rate $3/4$ can in general not be improved even then when additionally to the assumptions ($\mu$1), ($\mu$2) and ($\sigma$1), ($\sigma$2) further piecewise regularity assumptions are imposed on $\mu$ and $\sigma$. Not even the property of being piecewise infinitely often differentiable with uniformly bounded derivatives may help.

As an example consider the strong solution $X$ of~\eqref{sde0} with $\sigma = 1$ and $\mu=1_{[0,\infty)}$. We then have by Theorem~\ref{Thm0} and  Theorem~\ref{Thm1} that for all 
$n\in\N$,
\[
\frac{c_1}{n^{3/4}}\le 
\inf_{\substack{
       t_1,\dots ,t_n \in [0,1]\\
        g \colon \R^n \to \R \text{ measurable} \\
       }}	 \EE\Bigl[\Bigl|\int_0^11_{[0,\infty)}(X_s)\, ds-g(W_{t_1}, \ldots, W_{t_n})\Bigr|^p\Bigr]^{1/p} \le \frac{c_2}{n^{3/4}},
\]
where $c_1,c_2\in (0,\infty)$ depend only on 
$p$.

We briefly discuss the additional conditions ($\mu$3) to ($\mu$5) used in Theorem~\ref{Thm1}. First note that property ($\mu$1) implies that the limits $\mu(\xi_i-) = \lim_{x\uparrow \xi_i}\mu(x)$ and $\mu(\xi_i+) = \lim_{x\downarrow \xi_i}\mu(x)$ exist for all $i\in\{1,\dots,k\}$, see Lemma~\ref{basics}.
In the presence of ($\mu$1) and ($\mu$2), the condition  ($\mu$3) can not be waived in Theorem~\ref{Thm1}: if $\mu$ satisfies ($\mu$1) and ($\mu$2) and $\mu(\xi_i+)=\mu(\xi_i-)$ for every $i\in\{1, \ldots, k\}$
then $\mu_{|_{\R\setminus\{\xi_1, \ldots, \xi_k\}}}$ has a Lipschitz continuous extension
 $\tilde\mu\colon\R\to\R$, 
  which has a Lipschitz continuous derivative on   $(\xi_{i-1}, \xi_i)$  for every $i\in\{1, \ldots, k+1\}$,                       
 and $X$ is also a strong solution of the SDE \eqref{sde0} with  $\mu$ replaced by $\tilde \mu$ and  $\sigma = 1$. By~\cite[Theorem 2]{MGY19b} it then follows that the Milstein scheme achieves at least an $L_p$-error rate $1$ for approximation of $X_1$, which is in contradiction to the lower bound~\eqref{l3}. The condition ($\mu$4) is of major importance for our  proof of Theorem~\ref{Thm1}, see the discussion of our proof strategy below, and it is unclear to us whether this condition could be weakened or even dropped.
  With respect to condition ($\mu$5) we believe that its use in the proof of Theorem~\ref{Thm1} could be avoided by fully exploiting the fact that under the condition ($\mu$1) the drift coefficient $\mu$ satisfies a linear growth condition, see Lemma~\ref{basics}, which in turn implies that $X_1$ has finite moments of any order.  

We add that lower error bounds for strong approximation of scalar SDEs at a single time are already  provided in~\cite{hhmg2019} and~\cite{m04}, but in the setting of Theorem~\ref{Thm1} these bounds turn out to be much too small. In fact, if $\sigma=1$, $\mu$ satisfies ($\mu$1) and if there exists an open interval $I\subset \R$ and a time $t_0\in[0,1)$ such that $\mu$ is three times continuously differentiable on $I$, $\mu'\neq 0$ on $I$ and $\PP(X_{t_0}\in I)>0$ then~\cite[Theorem 6]{hhmg2019} implies only that~\eqref{l3} holds with $c/n^{3/4}$ 
replaced by $c/n$. Note, however, that the  lower bound $c/n$ in~\cite[Theorem 6]{hhmg2019} is also valid for approximations of $X_1$ that may use $n$ sequential evaluations of $W$ on average, while Theorem~\ref{Thm1} only covers approximations that are based on evaluation of $W$ at $n$ fixed discretization sites.
We conjecture that the lower bound in Theorem~\ref{Thm1} does not hold anymore if one allows for sequential evaluation of $W$ and that 
one can achieve under the assumptions ($\mu$1), ($\mu$2) and ($\sigma$1), ($\sigma$2) an $L_p$-error rate $1$ by a method based on adaptive step-size control. The proof of this conjecture will be the subject of future work.

We turn to a sketch of our proof strategy for Theorem~\ref{Thm1}, which heavily differs from the  techniques known from the literature that have been employed so far for establishing lower error bounds in the context of approximation of SDEs. To avoid technical details we restrict to the analysis of the $L_2$-error and we only study approximations of $X_1$ that are based on equidistant evaluations of $W$. Fix  $n\geq 2$  and put $t_i=i/n$ for $i\in\{0,1,\dots,n\}$.  

The central  idea 
of our proof
is to consider a second Brownian motion $\widetilde W$ such that  $W$ and $\widetilde W$ are coupled at the points $t_1,\dots,t_n$ but independent, conditioned on $W_{t_1},\dots,W_{t_n}$, and to study the mean square distance of the two corresponding strong solutions $X$ and $\widetilde X$ of the SDE~\eqref{sde0} at time $1$. Formally, let $\overline W$ denote the piecewise linear interpolation of $W$ at the points $t_0,\dots,t_n$, let $B=W-\overline W$ denote the corresponding piecewise Brownian bridge process, and define
\[
\widetilde W = \overline W + \widetilde B,
\]
where $\PP^B = \PP^{\widetilde B}$ and 
 $W,\widetilde B$ 
are independent. Then for all $t\in[0,1]$,
\[
X_t = x_0 +\int_0^t\mu(X_s)\, ds + W_t,\quad \widetilde X_t = x_0 +\int_0^t\mu(\widetilde X_s)\, ds + \widetilde W_t,
\]
and for every measurable function $g\colon\R^n\to\R$ one has
\begin{equation}\label{i0}
\EE\bigl[|X_1-g(W_{t_1},\dots,W_{t_n})|^2\bigr]^{1/2}\ge \frac{1}{2}\,\EE\bigl[|X_1-\widetilde X_1|^2\bigr]^{1/2},
\end{equation}
see Lemma~\ref{lemmanew02}.
 By the coupling of $W$ and $\widetilde W$ we have
\[
X_{t_i}-\widetilde X_{t_i} = X_{t_{i-1}}-\widetilde X_{t_{i-1}} + \int_{t_{i-1}}^{t_i} (\mu(X_s)-\mu(\widetilde X_s))\, ds
\]
for all $i\in\{1,\dots,n\}$, which yields
\begin{equation}\label{i1}
\begin{aligned}
\EE\bigl[|X_1-\widetilde X_1|^2\bigr] & = 2\sum_{i=1}^n \underbrace{\EE\Bigl[(X_{t_{i-1}}-\widetilde X_{t_{i-1}})\, \int_{t_{i-1}}^{t_i} (\mu(X_s)-\mu(\widetilde X_s))\, ds\Bigr]}_{=:\,m_i} \\
& \qquad\qquad + \sum_{i=1}^n \underbrace{\EE\Bigl[\Bigr(\int_{t_{i-1}}^{t_i} (\mu(X_s)-\mu(\widetilde X_s))\, ds\Bigr)^2\Bigr]}_{=:\,d_i}.
\end{aligned}
\end{equation}

Using the assumption that $\mu$ is increasing and the fact that pathwise uniqueness holds for equation~\eqref{sde0} we obtain by a comparison theorem for SDEs that 
\begin{equation}\label{i2}
m_i\ge 0
\end{equation}
 for all $i\in\{1,\dots,n\}$, see Lemma~\ref{mixed}.
 
 For the analysis of the terms  $d_i$ we first show that for all 
$i\ge n/2+1$ 
 the solutions $X$ and $\widetilde X$ may be replaced on
  $[t_{i-1},t_i]$ 
  by the processes
\[
(X_{t_{i-1}} + W_s-W_{t_{i-1}})_{ s\in[t_{i-1},t_i]}\,\text{ and } \,( X_{t_{i-1}} + \widetilde W_s-\widetilde W_{t_{i-1}})_{ s\in[t_{i-1},t_i]},
\] 
respectively, in the sense that
\begin{equation}\label{i3xx}
d_i \ge \frac{1}{4}
\EE\Bigl[\underbrace{\Bigr(\int_{t_{i-1}}^{t_i} \bigl(\mu(X_{t_{i-1}} + W_s-W_{t_{i-1}})-\mu( X_{t_{i-1}} + \widetilde W_s-\widetilde W_{t_{i-1}})\bigr)\, ds\Bigr)^2}_{R_i}\Bigr] -\frac{c}{n^{5/2+1/16}},
\end{equation}
see Lemma~\ref{diagonal1}.
 To obtain~\eqref{i3xx} we establish 
 appropriate $L_p$-estimates for the differences $X_{t_{i-1}}-\widetilde X_{t_{i-1}}$,
 see Lemma~\ref{lab2}, 
 and 
  $L_2$-estimates
 for the total time of $(X_s)_{s\in [t_{i-1},t_i]}$ and $(X_{t_{i-1}} + W_s-W_{t_{i-1}})_{s\in [t_{i-1},t_i]}$ lying on different sides of a fixed horizontal line
 in order to cope with the discontinuities of the drift coefficient $\mu$, see Lemma~\ref{Yproc}.

It remains to provide lower bounds for the terms $\EE[R_i]$ for $i\ge n/2+1$.
 Note that $W_s-W_{t_{i-1}} =n(s-t_{i-1})(W_{t_i}-W_{t_{i-1}}) + B_s$ and $\widetilde W_s-\widetilde W_{t_{i-1}} =n(s-t_{i-1})(W_{t_i}-W_{t_{i-1}}) + \widetilde B_s $, and therefore 
 for $\PP^{(X_{t_{i-1}},W_{t_i}-W_{t_{i-1}})}$-almost all $(x,\delta)\in\R^2$,
\begin{equation}\label{i3aaa}
\EE[R_i\,|\,X_{t_{i-1}} = x, W_{t_i}-W_{t_{i-1}}=\delta] =2\int_{t_{i-1}}^{t_i} \int_{t_{i-1}}^{t_i} \text{Cov}(\mu(x+a_s\delta+B_s), \mu(x+a_t\delta+B_t))\,ds\, dt, 
\end{equation}
where $a_s=n(s-t_{i-1})$ for all $s\in [t_{i-1},t_i]$. 
Note further that $B_s $ and $B_t $ are 
nonnegatively
correlated for all $s,t\in [t_{i-1},t_i]$.
 It is well known that  
 nonnegatively correlated, jointly normally distributed 
random
variables are positively associated so that $\text{Cov}(\mu(x+a_s\delta+B_s), \mu(x+a_t\delta+B_t))\ge 0$ for all $s,t\in [t_{i-1},t_i]$ because $\mu$ is increasing.  
 In Lemma~\ref{TongIn} in the appendix we 
establish for bivariate 
normal random
variables $(Z_1,Z_2)$ with 
nonnegative  correlation and increasing, piecewise Lipschitz continuous functions $f_1,f_2\colon \R\to\R$ a lower bound for  the covariance of $f_1(Z_1)$ and $f_2(Z_2)$ in terms of the jump sizes of $f_1$ and $f_2$ at their discontinuity points. 
We then apply these covariance bounds to the integrand in the right hand side of~\eqref{i3aaa} and take expectations to obtain
\begin{equation}\label{i4}
\EE[R_i] \ge  (\mu(\xi_\ell+)-\mu(\xi_\ell-))^2 \,
\frac{c}{n^{5/2}}
\end{equation} 
for all $i\ge n/2+1$ and $\ell\in\{1,\dots,k\}$, where $c\in (0,\infty)$ does not depend on $n$,
see Lemma~\ref{BrBr}  and Lemma~\ref{rest}. 

Combining~\eqref{i0},~\eqref{i1},~\eqref{i2},~\eqref{i3xx} and~\eqref{i4} yields the claimed lower  bound in Theorem~\ref{Thm1} for the $L_2$-error in place of the $L_1$-error.

Our proof technique also applies to obtain lower error bounds for estimating occupation time functionals of the Brownian motion $W$. 
\begin{theorem}\label{Thm2}
Assume that $\mu$ satisfies the assumptions ($\mu$1) and ($\mu$3) and is increasing or decreasing. Then for every $\varepsilon \in (0,\infty)$ there exists $c\in(0, \infty)$ such that for all $n\in\N$, 
\begin{equation}\label{l4}
\inf_{\substack{
       t_1,\dots ,t_n \in [0,1]\\
        g \colon \R^n \to \R \text{ measurable} \\
       }}	 \EE\Bigl[\Bigl|\int_0^1 \mmu(W_s) \, ds-g(W_{t_1}, \ldots, W_{t_n})\Bigr|^p\Bigr]^{1/p}\geq \begin{cases} \frac{c}{n^{3/4}}, & \text{ if }p=2,\\  \frac{c}{n^{3/4+\varepsilon}}, & \text{ if }p=1. \end{cases}
\end{equation}
\end{theorem}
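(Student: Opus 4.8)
\textbf{Proof plan for Theorem~\ref{Thm2}.}
Throughout write $F(w)=\int_0^1\mu(w_s)\,ds$. Fix $n\in\N$ and time points in $[0,1]$; sorting them and inserting $t_0=0$, $t_{n+1}=1$ we may assume $0=t_0\le t_1\le\dots\le t_n\le t_{n+1}=1$, and set $h_i=t_i-t_{i-1}$. If $\mu$ is decreasing we replace it by $-\mu$, which alters neither side of~\eqref{l4} nor the hypotheses, so we may assume $\mu$ increasing. As in the strategy for Theorem~\ref{Thm1} the first step is to introduce a Brownian motion $\widetilde W$ coupled to $W$: put $\widetilde W_{t_i}=W_{t_i}$ for $i=0,\dots,n$, on each $[t_{i-1},t_i]$ with $i\le n$ replace the Brownian bridge increment $W-\overline W$ by an independent copy, and on $[t_n,1]$ replace the increment $(W_s-W_{t_n})_{s\in[t_n,1]}$ by an independent copy. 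Then, conditionally on $\mathcal G:=\sigma(W_{t_1},\dots,W_{t_n})$, the processes $W$ and $\widetilde W$ are independent with the same distribution, so $F(W)$ and $F(\widetilde W)$ are conditionally i.i.d.\ given $\mathcal G$, and since $g(W_{t_1},\dots,W_{t_n})$ is $\mathcal G$-measurable the argument used for Lemma~\ref{lemmanew02} yields
\[
\EE\bigl[|F(W)-g(W_{t_1},\dots,W_{t_n})|^p\bigr]\ge 2^{-p}\,\EE\bigl[|F(W)-F(\widetilde W)|^p\bigr]
\]
for every measurable $g$. It thus suffices to bound $\EE[|F(W)-F(\widetilde W)|^p]$ from below, uniformly in the choice of $t_1,\dots,t_n$.

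Write $F(W)-F(\widetilde W)=\sum_{i=1}^{n+1}D_i$ with $D_i=\int_{t_{i-1}}^{t_i}(\mu(W_s)-\mu(\widetilde W_s))\,ds$. Since on each interval $W$ and $\widetilde W$ have the same $\mathcal G$-conditional law while the bridge increments of distinct intervals are independent of each other and of $\mathcal G$, the $D_i$ are, conditionally on $\mathcal G$, independent with conditional mean zero; hence all cross terms vanish and
\[
\EE\bigl[(F(W)-F(\widetilde W))^2\bigr]=\sum_{i=1}^{n+1}\EE[D_i^2],\qquad \EE[D_i^2]=2\,\EE\Bigl[\int_{t_{i-1}}^{t_i}\!\int_{t_{i-1}}^{t_i}\text{Cov}\bigl(\mu(W_s),\mu(W_t)\mid\mathcal G\bigr)\,ds\,dt\Bigr],
\]
exactly as in~\eqref{i3aaa}. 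Conditionally on $\mathcal G$, $(W_s,W_t)$ is bivariate normal with nonnegatively correlated coordinates, so by monotonicity of $\mu$ each $\EE[D_i^2]$ is nonnegative and we may freely discard unwanted intervals.

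The crux is a dichotomy on the mesh. If $h_i\ge 1/8$ for some $i$, then $\EE[D_i^2]=2\,\EE\bigl[\text{Var}\bigl(\int_{t_{i-1}}^{t_i}\mu(W_s)\,ds\mid W_{t_{i-1}},W_{t_i}\bigr)\bigr]$, which depends only on $t_{i-1},t_i,\mu$ (not on $n$), is strictly positive because $\mu$ is non-constant (it has a jump by ($\mu$3)), and, by continuity and compactness, is bounded below uniformly over all $(a,b)$ with $0\le a\le b\le 1$, $b-a\ge 1/8$ by a constant $c_0>0$; this trivially gives the required lower bound. Otherwise all $h_i<1/8$, so $[1/2,5/8]$ is not contained in a single interval and therefore $(1/2,5/8)$ contains some grid point $t_j$ with $1\le j\le n$; put $\tau=\min\{t_i:t_i\ge 1/2\}\le t_j<5/8$, so the intervals with $t_{i-1}\ge 1/2$ tile $[\tau,1]$, have total length $1-\tau>3/8$ and number at most $n$. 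Fix $\ell$ with $\mu(\xi_\ell+)\ne\mu(\xi_\ell-)$, which exists by ($\mu$3). For $t_{i-1}\in[1/2,1]$ the law of $W_{t_{i-1}}$ has a density bounded below by a positive constant on a fixed neighbourhood of $\xi_\ell$, so for each such interval the Brownian-bridge covariance estimates of Lemma~\ref{TongIn} and Lemma~\ref{BrBr} apply with $X_{t_{i-1}}$ replaced by $W_{t_{i-1}}$ — no approximation step as in Lemma~\ref{diagonal1} is needed here, since the integrand is already $\mu(W_s)$ and not an approximation of it — and give $\EE[D_i^2]\ge c\,(\mu(\xi_\ell+)-\mu(\xi_\ell-))^2\,h_i^{5/2}$. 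Summing over these intervals and using the power-mean inequality (for nonnegative $a_1,\dots,a_m$ one has $\sum a_i^{5/2}\ge(\sum a_i)^{5/2}/m^{3/2}$) with total length $>3/8$ and $m\le n$ yields $\EE[(F(W)-F(\widetilde W))^2]\ge c/n^{3/2}$, which is the assertion for $p=2$.

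For $p=1$ I would upgrade the $L^2$-bound by Hölder's inequality: for every $q>2$, $\EE[|Y|]\ge\EE[Y^2]^{(q-1)/(q-2)}/\EE[|Y|^q]^{1/(q-2)}$, so it remains to bound $\EE[|F(W)-F(\widetilde W)|^q]$ from above. Using the conditional independence and mean-zero structure of the $D_i$ together with moment estimates for the occupation-time increments $D_i$ — and the linear growth of $\mu$ from Lemma~\ref{basics} to control contributions of large values of $W$ — one obtains a bound of the form $\EE[|F(W)-F(\widetilde W)|^q]\le C_{q,\eps}\,n^{-3q/4+\eps}$, whence $\EE[|F(W)-F(\widetilde W)|]\ge c_\eps\,n^{-3/4-\eps}$. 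I expect the main obstacle to lie precisely in these quantitative estimates: already for $p=2$ one must verify that the covariance bounds of Lemma~\ref{TongIn}/Lemma~\ref{BrBr} survive when the equidistant mesh is replaced by an arbitrary one and the starting law by a general density bounded below near $\xi_\ell$, and for $p=1$ one additionally needs sufficiently sharp $q$-th moment bounds for the $D_i$; the (seemingly unavoidable with this method) loss of a factor $n^\eps$ in the latter is the source of the $\eps$ in~\eqref{l4}.
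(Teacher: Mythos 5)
Your strategy coincides with the paper's: couple $\widetilde W$ to $W$ through independent bridge increments, reduce to $\EE[|\int_0^1(\mu(W_s)-\mu(\widetilde W_s))\,ds|^p]$ via the triangle-inequality lemma, expand the second moment into a sum of per-interval terms $\EE[D_i^2]$ whose cross terms vanish, bound each $\EE[D_i^2]$ with $t_{i-1}\ge 1/2$ from below by $c\,h_i^{5/2}$ using the quantitative association bound (Lemmas~\ref{TongIn} and~\ref{BrBr}), and sum via the power-mean inequality. For $p=2$ this is essentially complete; your dichotomy on the mesh is an alternative to the paper's cleaner device of refining an arbitrary mesh by the equidistant grid $\{2j/n\}$ (which at most triples the number of points and so costs only a constant), and the large-interval branch of your dichotomy would still require you to actually prove the uniform positivity of $\EE\bigl[\mathrm{Var}\bigl(\int_a^b\mu(W_s)\,ds\mid W_a,W_b\bigr)\bigr]$ over $b-a\ge 1/8$, though that too follows from Lemma~\ref{BrBr}.

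The genuine gap is in the $p=1$ case, and it is twofold. First, the fourth-moment upper bound $\EE[|F(W)-F(\widetilde W)|^4]\lesssim n^{-3+\eps}$, which you defer, is the bulk of the paper's proof (Lemma~\ref{lemmax1}): the conditional independence of the $D_i$ given $\mathcal G$ kills $\EE[D_iD_{j_1}\cdots D_{j_m}]$ when $i\notin\{j_1,\dots,j_m\}$, but it does \emph{not} make the surviving terms $\EE[D_i^2D_j^2]$ factor, since $\EE[D_i^2\mid\mathcal G]$ and $\EE[D_j^2\mid\mathcal G]$ are correlated: each is anomalously large (of order $h_i^2$ rather than $h_i^3$) precisely on the event that $W_{t_{i-1}}$ lies within $\sqrt{h_i}$ of a discontinuity $\xi_\ell$. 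One must therefore estimate the joint probability that $W_{t_{i-1}}$ and $W_{t_{j-1}}$ are simultaneously near discontinuities, which the paper does with a truncation at level $\alpha_n=2\sqrt{\ln(n+1)/n}$ and the two-dimensional Gaussian bound $\alpha_n^2/\sqrt{t_{i-1}(t_{j-1}-t_{i-1})}$; this is both where the work lies and where the factor $n^{\eps}$ (a logarithm, in fact) enters. Second, your dichotomy undermines this step: in the branch ``all $h_i<1/8$'' the mesh may still contain boundedly many intervals of length comparable to $1$ (e.g.\ eight intervals of length $1/16$ covering $[1/2,1]$ with all remaining points crammed into $[0,1/2]$), for which $\EE[D_i^4]$ is of constant order, so the claimed bound $\EE[|F(W)-F(\widetilde W)|^4]\lesssim n^{-3+\eps}$ is simply false for such meshes and the Hölder interpolation $\EE[|Z|]\ge\EE[Z^2]^{3/2}/\EE[Z^4]^{1/2}$ then only yields $n^{-9/4}$. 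You need the refinement reduction (so that every $h_i\le 2/n$) before any of the moment estimates, not a crude threshold dichotomy.
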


Theorem~\ref{Thm2} generalizes a result in~\cite{NO11}, which establishes the lower bound $c/n^{3/4}$ for the $L_2$-error of the Riemann-sum estimator $n^{-1}\sum_{i=1}^n 1_{[0,\infty)}(W_{(i-1)/n})$ of the integral $\int_0^1 1_{[0,\infty)}(W_s)\, ds$. For that particular estimator,  $c/n^{3/4}$ is also an upper $L_2$-error bound, see~\cite{NO11}.
We conjecture that the latter result extends to our more general setting as well, in the sense that the $L_p$-error of the Riemann-sum estimator $n^{-1}\sum_{i=1}^n \mu(W_{(i-1)/n})$ of  $I_\mu(W) = \int_0^1 \mmu(W_s) \, ds$ is bounded by $c/n^{3/4}$ if $\mu$ satisfies ($\mu$1) and ($\mu$2). We furthermore add that lower 
and upper
error bounds for estimators of occupation time functionals $I_\mu(W)$
for bounded $\mu$ from fractional $L_2$-Sobolev spaces
  are  established in~\cite{A2017}. 
In particular, in 
the latter paper it is shown that for $\mu\in L_2(\R)$ having the Fourier-transform $u\mapsto (1+|u| )^{-s-1/2}$, where $s\in [0,1]$, the lower bound $c/n^{(1+s)/2+}$ holds for the $L_2$-error of any estimator of $I_\mu(W)$ based  on $W_{i/n}, i=1,\dots,n$.  

Obviously, strong approximation at time 1 of the solution $X$ of the equation \eqref{sde0} with $\sigma=1$ is closely related to estimating the occupation time functional $I_\mu(X)=\int_0^1 \mu(X_s)\, ds$. The first problem requires approximation of $I_\mu(X)$ based on $n$ evaluations of the driving Brownian motion $W$, while the second problem deals with   approximation of $I_\mu(X)$ based on $n$ evaluations of the process $X$. It is an open question to us whether these problems have the same complexity in the sense of identical smallest possible error rates in terms of $n$. 

Clearly, Theorem~\ref{Thm2} is not a special case of Theorem~\ref{Thm1} but it seems likely that both results are particular cases of a 
(yet to be shown) result
on sharp lower error bounds for strong approximation of systems of SDEs with commutative noise and drift coefficients that satisfy suitable multivariate versions of the conditions ($\mu$1) to ($\mu$5). The condition of commutative noise stems from the fact that for strong approximation at a single time of systems of SDEs with non-commutative noise the $L_2$-error rate $1/2$ can in general not be improved by any approximation based on finitely many evaluations of the driving Brownian motion. See~\cite{ClarkCameron1980,MG02_habil} for details.   
 
\section{Proofs} \label{Auxil}

We briefly outline the structure of this section. In Subsection~\ref{sub21} we provide properties of the drift coefficient $\mu$ under the assumption ($\mu1$) and the lower bound-techniques that are crucial for the proof of both Theorem~\ref{Thm1} and Theorem~\ref{Thm2}. The latter theorem is then proven in Subsection~\ref{sub22}. In Subsection~\ref{sub23} we collect properties of 
solutions 
of~\eqref{sde0} that are used in the proof of Theorem~\ref{Thm1}. The proof of Theorem~\ref{Thm1} is then carried out in Subsection~\ref{sub24}.

\subsection{Basic properties of $\mu$ and two crucial lower bound-techniques}\label{sub21}
We first provide basic properties of functions satisfying the assumption ($\mu1$). These properties are well-known. For the convenience of the reader we also provide proofs of them.

\begin{lemma}\label{basics}
Assume that $\mu$ satisfies ($\mu1$) and put
\[
D_i = \{(u,v)\in \R^2\colon (u-\xi_i)\,(v-\xi_i)\le 0\}
\]
for $i \in\{ 1,\dots,k\}$.
 Then $\mu$ satisfies a linear growth condition, the limits $\mu(\xi_i-) = \lim_{x\uparrow \xi_i}\mu(x)$ and $\mu(\xi_i+) = \lim_{x\downarrow \xi_i}\mu(x)$
exist for all $i\in\{1,\dots,k\}$, and there exists $c\in (0,\infty)$ such that for all $x,y\in \R$,
\begin{equation}\label{basics1}
|\mu(x) -\mu(y)| \le c\, \Bigl(|x-y| + \sum_{i=1}^k 1_{D_i}(x,y)\Bigr).
\end{equation}
\end{lemma}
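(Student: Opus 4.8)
The plan is to handle the three assertions — linear growth, existence of one-sided limits at each $\xi_i$, and the inequality \eqref{basics1} — in a unified way by first extracting local Lipschitz constants from ($\mu$1). First I would note that for each $i\in\{1,\dots,k+1\}$ the hypothesis ($\mu$1) gives a constant $L_i\in(0,\infty)$ with $|\mu(x)-\mu(y)|\le L_i|x-y|$ for all $x,y\in(\xi_{i-1},\xi_i)$; set $L=\max_i L_i$. Being Lipschitz on the bounded-below or bounded-above open interval adjacent to a finite breakpoint, $\mu$ is uniformly continuous there, so the one-sided limits $\mu(\xi_i-)=\lim_{x\uparrow\xi_i}\mu(x)$ and $\mu(\xi_i+)=\lim_{x\downarrow\xi_i}\mu(x)$ exist and are finite for every $i\in\{1,\dots,k\}$; on the two unbounded pieces $(-\infty,\xi_1)$ and $(\xi_k,\infty)$ the Lipschitz bound directly yields $|\mu(x)|\le|\mu(y_0)|+L|x-y_0|$ for a fixed reference point $y_0$ in that piece, which is the desired linear growth there. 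On the bounded middle pieces $\mu$ is bounded (being uniformly continuous with finite one-sided limits at the endpoints), so linear growth holds globally with a suitable constant.

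For the inequality \eqref{basics1}, I would fix $x\le y$ (the case $x>y$ being symmetric) and let $i_1<\dots<i_m$ enumerate those indices $j\in\{1,\dots,k\}$ with $x\le\xi_j\le y$; insert these points as a partition $x=z_0\le\xi_{i_1}\le\dots\le\xi_{i_m}\le z_{m}=y$ and telescope. Within each subinterval between consecutive breakpoints the two endpoints lie in the closure of a single Lipschitz piece, so by taking one-sided limits in the Lipschitz estimate one gets $|\mu$-increment$|\le L\cdot$(length), while at each breakpoint $\xi_{i_r}$ one picks up an extra jump contribution $|\mu(\xi_{i_r}+)-\mu(\xi_{i_r}-)|$. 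Summing, $|\mu(x)-\mu(y)|\le L|x-y|+\sum_{r=1}^m|\mu(\xi_{i_r}+)-\mu(\xi_{i_r}-)|$, and since each such $\xi_{i_r}$ satisfies $(x-\xi_{i_r})(y-\xi_{i_r})\le 0$, i.e. $1_{D_{i_r}}(x,y)=1$, the jump sum is bounded by $(\max_j|\mu(\xi_j+)-\mu(\xi_j-)|)\sum_{i=1}^k 1_{D_i}(x,y)$. Taking $c=\max\{L,\max_{1\le j\le k}|\mu(\xi_j+)-\mu(\xi_j-)|\}$ gives \eqref{basics1}.

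The only delicate point is the bookkeeping at the breakpoints when $x$ or $y$ itself equals some $\xi_j$: the value $\mu(\xi_j)$ need not equal either one-sided limit, so the telescoping must be set up so that each finite-piece increment is estimated via the one-sided limits (using continuity of $\mu$ on the open pieces and finiteness of the limits) rather than via the value at the endpoint, and the discrepancy $|\mu(\xi_j)-\mu(\xi_j\pm)|$ is absorbed into the corresponding jump term $|\mu(\xi_j+)-\mu(\xi_j-)|$ plus, if necessary, into the indicator $1_{D_j}(x,y)$, which equals $1$ whenever $x=\xi_j$ or $y=\xi_j$. This is routine once one writes it out carefully, so I expect no real obstacle — the lemma is essentially a packaging of the elementary fact that a piecewise-Lipschitz function's total increment is controlled by the Lipschitz part plus the jumps crossed.
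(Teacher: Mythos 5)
Your proof is correct, but for the key inequality \eqref{basics1} it takes a genuinely different route from the paper. You telescope $\mu(y)-\mu(x)$ across the breakpoints lying in $[x,y]$, bounding each within-piece increment by the Lipschitz constant times its length (via the one-sided limits) and each crossed breakpoint by a fixed jump/discrepancy constant, and then observe that every crossed breakpoint $\xi_j$ activates $1_{D_j}(x,y)$. The paper instead argues by cases: it notes that $\R^2\setminus\bigcup_{i=1}^{k+1}(\xi_{i-1},\xi_i)^2=\bigcup_{i=1}^k D_i$, applies the piecewise Lipschitz bound directly when $x$ and $y$ lie in the same open piece, and otherwise invokes the already-established linear growth to bound $|\mu(x)-\mu(y)|$ crudely by a constant times $1+|x-\xi_i|+|y-\xi_i|$ on $D_i$, absorbing $|x-\xi_i|,|y-\xi_i|\le|x-y|$ into the Lipschitz term. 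Your telescoping yields a sharper constant (the actual sum of jump sizes rather than a linear-growth overshoot) at the price of the endpoint bookkeeping you flag; note only that the constant you first name, $\max\{L,\max_j|\mu(\xi_j+)-\mu(\xi_j-)|\}$, must indeed be enlarged to also dominate $\max_j\bigl(|\mu(\xi_j)-\mu(\xi_j-)|+|\mu(\xi_j+)-\mu(\xi_j)|\bigr)$, exactly as you indicate in your final paragraph, since $\mu(\xi_j)$ need not coincide with either one-sided limit. The treatments of linear growth and of the existence of the one-sided limits coincide in substance with the paper's.
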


\begin{proof}
For every $i\in\{1,\dots,k+1\}$ fix some $x_i\in (\xi_{i-1},\xi_i)$. By ($\mu1$) there exists $c\in (0,\infty)$ such that for all $i\in\{1,\dots,k+1\}$ and all $x\in (\xi_{i-1},\xi_{i})$,
\[
|\mu(x)| \le |\mu(x)-\mu(x_i)| + |\mu(x_i)| \le c\,|x-x_i| +|\mu(x_i)| \le c\,|x| + c\,|x_i| + |\mu(x_i)|.
\]
Hence for all $x\in\R$,
\[
|\mu(x)| \le \max_{i =1,\dots, k}|\mu(\xi_i)| 
+ \max_{i =1,\dots, k+1} (c\,|x_i| +|\mu(x_i)|) + c\,|x|,
\]
which proves that $\mu$ satisfies a linear growth condition.

Next observe that
\[
\R^2 \setminus \bigcup_{i=1}^{k+1} (\xi_{i-1},\xi_i)^2 = \bigcup_{i=1}^k D_i. 
\]
Using the latter fact, the linear growth property of $\mu$ and  ($\mu1$) we see that there exist $c_1,c_2\in (0,\infty)$ such that for all $x,y\in \R$,
\begin{equation*}
\begin{aligned}
|\mu(x)-\mu(y)| & \le c_1\,|x- y| + c_1\,(1 +|x| +|y|)\, 1_{\bigcup_{i=1}^k D_i} (x,y)\\
& \le  c_1\,|x- y| + c_2\,\sum_{i=1}^k (1 +|x-\xi_i| +|y-\xi_i|)\,1_{D_i}(x,y).
\end{aligned}
\end{equation*}
Now observe that 
for all $i\in\{1,\dots,k\}$ and 
all $x,y\in D_i$ we have $|x-\xi_i| \le |x-y|$, which finishes the proof of~\eqref{basics1}.

Finally, let $i\in\{1,\dots,k\}$ and let $(x_n)_{n\in\N}$ be a sequence in $(\xi_{i-1},\xi_i)$, which converges to $\xi_i$. Using the Lipschitz continuity of $\mu$ on $(\xi_{i-1},\xi_i)$ we obtain that $(\mu(x_n))_{n\in\N}$ is a Cauchy-sequence and hence has a limit $z\in\R$. If $(\tilde x_n)_{n\in\N}$ is a further sequence in $(\xi_{i-1},\xi_i)$, which converges to $\xi_i$, then $\lim_{n\to\infty} (x_n-\tilde x_n) = 0$ and by the Lipschitz continuity of $\mu$ on $(\xi_{i-1},\xi_i)$ we conclude that $\lim_{n\to\infty} (\mu(x_n)-\mu(\tilde x_n)) = 0$. Thus the sequence $(\mu(\tilde x_n))_{n\in\N}$ converges to $z$ as well. This proves the existence of the limit $\mu(\xi_i-)\in\R$. The existence of the limit $\mu(\xi_{i}+)$ in $\R$ is shown in the same manner. This completes the proof of the lemma. 
\end{proof}

The following two lemmas are crucial for the proof of both Theorem~\ref{Thm1} and  Theorem \ref{Thm2}. 

Lemma \ref{symm} is an elementary 
consequence of the triangle inequality,
see also~\cite[Lemma 3]{MGRY2018}. 
\begin{lemma}\label{symm}
Let $(\Omega,\F,\PP)$ be a probability space, let
$
  (\Omega_1, \mathcal{A}_1)
$
and
$
  (\Omega_2, \mathcal{A}_2)
$
be measurable spaces
and let
$
  V_1 \colon \Omega\to \Omega_1,
$
$
  V_2, V_2'\colon\Omega\to \Omega_2
$
be random variables such that
\[
  \PP^{ (V_1, V_2) } =
  \PP^{ (V_1, V_2') }.
\]
Then for all $p\in[1, \infty)$ and for all measurable mappings
$
  \Phi \colon \Omega_1\times\Omega_2\to \R
$
and
$
  \varphi\colon \Omega_1\to\R,
$

\[
  \EE\big[
    |\Phi(V_1,V_2)- \varphi(V_1)|^p
  \big]^{1/p}
  \ge
  \frac{ 1 }{ 2 }
  \,
  \EE\big[
    | \Phi( V_1, V_2) - \Phi( V_1, V_2') |^p
  \big]^{1/p}
  .
\]
\end{lemma}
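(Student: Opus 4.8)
The plan is to combine Minkowski's inequality with the distributional hypothesis; no auxiliary construction is needed. First I would record the elementary fact that for any nonnegative measurable $h\colon\Omega_1\times\Omega_2\to[0,\infty]$ the expectation $\EE[h(V_1,V_2)]$ depends only on the push-forward measure $\PP^{(V_1,V_2)}$ (change of variables). Applying this with $h(\omega_1,\omega_2)=|\Phi(\omega_1,\omega_2)-\varphi(\omega_1)|^p$ and invoking $\PP^{(V_1,V_2)}=\PP^{(V_1,V_2')}$ gives
\[
\EE\bigl[|\Phi(V_1,V_2)-\varphi(V_1)|^p\bigr]=\EE\bigl[|\Phi(V_1,V_2')-\varphi(V_1)|^p\bigr],
\]
so the two $L_p$-quantities on the right-hand side of the asserted inequality coincide; call their common $p$-th root $e$.

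Next I would split
\[
\Phi(V_1,V_2)-\Phi(V_1,V_2')=\bigl(\Phi(V_1,V_2)-\varphi(V_1)\bigr)-\bigl(\Phi(V_1,V_2')-\varphi(V_1)\bigr)
\]
and apply the triangle inequality in $L_p(\Omega,\F,\PP)$ to obtain $\EE[|\Phi(V_1,V_2)-\Phi(V_1,V_2')|^p]^{1/p}\le e+e=2e$, which is exactly the claim after dividing by $2$. The only bookkeeping point is the interpretation when $e=\infty$: then the inequality is trivial, and when $e<\infty$ all terms entering Minkowski's inequality are finite, so the manipulation is legitimate.

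I do not foresee a genuine obstacle here — the statement is a two-line consequence of Minkowski's inequality plus the fact that an expectation of a nonnegative measurable functional is a function of the joint law alone; the slight care required is only in the measurability and finiteness bookkeeping just mentioned.
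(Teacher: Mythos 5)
Your proof is correct and is precisely the "elementary consequence of the triangle inequality" that the paper invokes: the change-of-variables identity equates the two $L_p$-distances to $\varphi(V_1)$, and Minkowski's inequality applied to the stated decomposition yields the factor $2$. Nothing further is needed.
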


Put
\begin{equation}\label{kappa}
\kappa = \frac{1}{16\pi}\,e^{-6}\, \int_0^{1/\sqrt{3}} \frac{1}{\sqrt{1-x^2}}\, e^{-\frac{24}{1-x^2}} dx.
\end{equation}

\begin{lemma}\label{BrBr}
Let $(\Omega,\F,\PP)$ be a probability space, let $t\in(0,1]$, $B, B'\colon [0,t]\times \Omega \to \R$ be Brownian bridges on $[0,t]$, let $U, V\colon\Omega\to\R$ be random variables and assume that $B, B', U, V$ are independent. Furthermore, let $k\in\N$, let $-\infty=\xi_0<\xi_1<\ldots<\xi_k<\xi_{k+1}=\infty$  and  let $h\colon\R\to\R$ satisfy
\begin{itemize}
\item[(i)] $h$ is increasing or decreasing,
\item[(ii)] $h$ is Lipschitz continuous on the interval $(\xi_{i-1}, \xi_i)$  for all $i\in\{1, \ldots, k+1\}$.
\end{itemize}
Then for every $i\in\{1, \ldots, k\}$ it holds 
\begin{align*}
&\EE\Bigl[\Bigl|\int_0^{t}\bigl(h(U+sV+B_s)-h(U+ sV+B'_s)\bigr) \, ds\Bigr|^2\Bigr]\\
&\qquad\qquad \geq \kappa\, (h(\xi_i+)-h(\xi_i-))^2\,   t^2\, \PP(U\in [\xi_i, \xi_i+\sqrt t])\, \PP(V\in [0, 1/\sqrt t]).
\end{align*}
\end{lemma}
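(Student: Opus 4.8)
The plan is to reduce the claim to a statement about a single discontinuity point $\xi_i$ and then exploit the positive correlation structure of the Brownian bridge together with the jump of $h$ at $\xi_i$. First I would condition on $U$ and $V$: writing $g(s,B) = h(U+sV+B_s)$, for fixed values $U=u$, $V=v$ one has, since $B$ and $B'$ are i.i.d.\ Brownian bridges independent of $(U,V)$,
\[
\EE\Bigl[\Bigl|\int_0^t \bigl(h(u+sv+B_s)-h(u+sv+B'_s)\bigr)\,ds\Bigr|^2\Bigr]
= 2\int_0^t\int_0^t \mathrm{Cov}\bigl(h(u+sv+B_s),\,h(u+rv+B_r)\bigr)\,ds\,dr.
\]
Because $(B_s,B_r)$ is bivariate normal with nonnegative correlation (the covariance of a Brownian bridge on $[0,t]$ is $s(t-r)/t\ge 0$ for $s\le r$) and $h$ is monotone, every integrand is nonnegative by the classical FKG/positive-association inequality for Gaussians. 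Hence the double integral is bounded below by the contribution of any sub-square $[s_0,s_1]^2\subset[0,t]$, and on such a sub-square I would apply the quantitative covariance bound of Lemma~\ref{TongIn} (the paper's quantitative positive-association lemma), which gives a lower bound of the form $c\,(h(\xi_i+)-h(\xi_i-))^2$ times a probability that the two Gaussian arguments straddle $\xi_i$.

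Concretely, I would restrict the outer integration to $s,r$ in a fixed fraction of $[0,t]$ — say $s,r\in[t/2,t]$ — and restrict attention to the event $\{U\in[\xi_i,\xi_i+\sqrt t]\}\cap\{V\in[0,1/\sqrt t]\}$, which is exactly the event appearing in the conclusion. On this event, $u+sv$ ranges over an interval of length at most $\sqrt t\cdot(1/\sqrt t)=1$ to the right of $\xi_i$ plus the shift from $u$, so $u+sv-\xi_i$ lies in a bounded interval like $[0,2]$; meanwhile $B_s$ has variance $s(t-s)/t$, which on $s\in[t/2,t]$ is of order $t$, and one checks the correlation between $B_s$ and $B_r$ stays in $[0,1/\sqrt3]$ for a suitable sub-range — this is where the explicit constant $\kappa$ with its $\int_0^{1/\sqrt3}$ and the $e^{-6}$, $e^{-24}$ factors comes from. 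Applying Lemma~\ref{TongIn} to the pair $(B_s,B_r)$ shifted by $u+sv$ and $u+rv$ (both functions taken to be $h$, with the relevant discontinuity at $\xi_i$) yields $\mathrm{Cov}(h(u+sv+B_s),h(u+rv+B_r))\ge \tilde\kappa\,(h(\xi_i+)-h(\xi_i-))^2$ for $(u,v)$ in that event and $s,r$ in that sub-range, with $\tilde\kappa$ an explicit constant. Integrating over the $ds\,dr$ sub-square of area $\asymp t^2$ and then over the $(u,v)$-event, and finally multiplying by $2$, produces the stated bound with $\kappa$ as defined in \eqref{kappa}.

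The main obstacle is bookkeeping the explicit constant: one must track the Gaussian density lower bounds on the relevant compact windows (the argument $B_s$ confined so that $u+sv+B_s$ lands just on the correct side of $\xi_i$ with probability bounded below, uniformly over the allowed $u,v,s$), and verify that the correlation $\rho(s,r)$ between the two bridge values, after possibly shrinking the sub-square, stays below $1/\sqrt3$ so that Lemma~\ref{TongIn} applies with the claimed constant. The monotonicity hypothesis (i) is essential twice over: it guarantees nonnegativity of all the discarded covariance terms so that shrinking the integration region is legitimate, and it fixes the sign of the jump contribution in Lemma~\ref{TongIn}. Assumption (ii) ensures $h$ is genuinely of the piecewise-Lipschitz type required by Lemma~\ref{TongIn}, so that away from $\xi_i$ the function does not destroy the lower bound. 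Once these density and correlation estimates are pinned down, the remaining computation is a routine change of variables and the substitution $x=\rho(s,r)$ (or an equivalent rescaling) that reproduces the integral defining $\kappa$.
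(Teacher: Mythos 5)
Your overall architecture is exactly the paper's: condition on $(U,V)$, rewrite the conditional second moment as $2\int_0^t\int_0^t \mathrm{Cov}(h(u+sv+B_s),h(u+rv+B_r))\,ds\,dr$, use monotonicity of $h$ and positive association to discard all but a sub-square of the $(s,r)$-domain, and on that sub-square apply the quantitative covariance bound of Lemma~\ref{TongIn} after normalizing $B_s,B_r$ to standard Gaussians. That is the correct route and all the ingredients are identified.

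However, there is a genuine error in the one step that produces the explicit constant. The lower bound supplied by Lemma~\ref{TongIn} is $\frac{1}{2\pi}e^{-a_i^2/2}\int_0^{\rho}\frac{1}{\sqrt{1-x^2}}e^{-(b_i-a_ix)^2/(2(1-x^2))}\,dx$, an integral of a nonnegative integrand from $0$ to $\rho$; it is \emph{increasing} in $\rho$. To dominate it from below by the quantity $\int_0^{1/\sqrt3}$ appearing in the definition of $\kappa$ in~\eqref{kappa}, you need the correlation to stay \emph{above} $1/\sqrt3$, not below it as you assert (twice). Your proposed sub-square $[t/2,t]^2$ is incompatible with this: with $\rho(s,r)=\frac{\sqrt{(t-\max(s,r))\min(s,r)}}{\sqrt{\max(s,r)(t-\min(s,r))}}$, one has $\rho(s,r)\to 0$ as $r\uparrow t$ with $s$ fixed (and the bridge variance $s(t-s)/t$ vanishes at $s=t$, contrary to your claim that it is of order $t$ on all of $[t/2,t]$), so on $[t/2,t]^2$ there is no positive lower bound on $\rho$ at all and the $\kappa$-bound cannot be extracted. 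The paper takes $s,r\in[t/4,t/2]$, for which $\rho\ge\frac{\sqrt{(t/2)(t/4)}}{\sqrt{(t/2)(3t/4)}}=1/\sqrt3$ and $|a_i|,|b_i|\le 2\sqrt3$, which yields exactly the factors $e^{-6}$, $e^{-24/(1-x^2)}$ and the upper limit $1/\sqrt3$ in~\eqref{kappa}, and whose area $t^2/16$ accounts (together with the factor $2$ and the $8$ in $8\kappa$) for the $t^2$ in the statement. So the fix is to reverse the inequality you impose on $\rho$ and move the sub-square away from the endpoint $t$ (and away from $0$), e.g.\ to $[t/4,t/2]^2$; with $[t/2,t]^2$, or any square touching $t$, the argument breaks down.
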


\begin{proof}
Note that all of the limits $h(\xi_i+),h(\xi_i-)$, $i=1,\dots,k$, exist due to the assumption (ii), see Lemma~\ref{basics}.
Put
\[
R=(R_s=U+sV+B_s)_{s\in[0,t]}, \quad R'=(R'_s=U+sV+B'_s)_{s\in[0,t]}
\]
and let
\[
D=\EE\Bigl[\Bigl|\int_0^{t}\bigl(h(R_s)-h(R'_s)\bigr) \, ds\Bigr|^2\Bigr].
\] 
Clearly, $\PP^{(R,R')} = \PP^{(R',R)}$, and therefore
\[
D=2\int_0^t\int_0^t\EE[h(R_r)\, h(R_s)-h(R_r)\, h(R'_s)] \, ds \,dr.
\]

Let $\varphi\colon [0,t]^2\times\R^2\to\R$  be given by
\[
\varphi(s,r,u,v)=\EE[h(B_r+u+rv)\, h(B_s+u+sv)]-\EE[h(B_r+u+rv)]\, \EE[h(B_s+u+sv)].
\]
The independence  of $B, B', U, V$ implies that for all $s,r\in[0, t]$ and $\PP^{(U, V)}
$-almost all $(u, v)\in\R^2$,
\begin{align*}
&\EE[h(R_r)\cdot h(R_s)-h(R_r)\, h(R'_s)\,|\,(U,V)=(u,v)]\\
&\qquad = \EE[h(B_r+u+rv)\, h(B_s+u+sv)-h(B_r+u+rv)\, h(B'_s+u+sv)]\\
&\qquad= \EE[h(B_r+u+rv)\, h(B_s+u+sv)]-\EE[h(B_r+u+rv)]\, \EE[h(B_s+u+sv)]\\
&\qquad=\varphi(s,r,u,v).
\end{align*}
Thus,
\[
D=2 \int_{\R^2}\int_0^t\int_0^t\varphi(s,r,u,v) \, ds \,dr\, \PP^{(U,V)}(d(u,v)).
\]

Below we show that for all $s,r\in (0,t)$  and all $u,v\in\R$,
\begin{equation}\label{o8}
\varphi(s,r,u,v)\geq 0.
\end{equation}
Moreover, we show that for all $i\in\{1, \ldots, k\}$, $s,r\in [t/4, t/2]$, $u\in [\xi_i, \xi_i+\sqrt t]$ and $v\in [0, 1/\sqrt t]$,
\begin{equation}\label{o9}
\varphi(s,r,u,v)\geq 8\kappa \, (h(\xi_i+)-h(\xi_i-))^2.
\end{equation}
Using~\eqref{o8}, ~\eqref{o9} and  the independence of $U$ and $V$ 
we conclude that for all $i\in\{1, \ldots, k\}$,
\begin{align*}
D&\geq 2  \int_0^{1/\sqrt{t}}\int_{\xi_i}^{\xi_i+\sqrt{t}} \int_{t/4}^{t/2} \int_{t/4}^{t/2}8\kappa \, (h(\xi_i+)-h(\xi_i-))^2 \, ds \,dr\,\PP^{U}(du)\,\PP^{V}(dv)\\
&=\kappa\, (h(\xi_i+)-h(\xi_i-))^2  \, t^2\, \PP(U\in [\xi_i, \xi_i+\sqrt t])\, \PP(V\in [0, 1/\sqrt t]),
\end{align*}
which is the statement of the lemma.

It remains to prove \eqref{o8} and \eqref{o9}. 
To this end,  let $s,r\in (0,t)$ and $u,v\in\R$. 
Put 
\[
Z=\tfrac{\sqrt t}{\sqrt{r(t-r)}}B_r, \, Y= \tfrac{\sqrt t}{\sqrt{s(t-s)}}B_s
\]
and define $f,g\colon\R\to \R$ by
\[
f(x) = h\Bigl(\tfrac{\sqrt{r(t-r)}}{\sqrt{t}} x + u + rv\Bigr), \, g(x) = h\Bigl(\tfrac{\sqrt{s(t-s)}}{\sqrt{t}} x + u + sv\Bigr),\quad x\in\R.
\]
It is straightforward to see that $Z,Y,f,g$ satisfy the assumptions in Lemma \ref{TongIn} in the appendix with 
\[
\rho = \EE\Bigl[\tfrac{\sqrt t}{\sqrt{s(t-s)}}B_s \tfrac{\sqrt t}{\sqrt{r(t-r)}}B_r\Bigr]=\tfrac{(t-\max(s,r))\, \min(s,r)}{\sqrt{s(t-s)}\sqrt{r(t-r)}}\in[0,1],
\]
$k=l$ and 
\[
a_i=(\xi_i-u-rv)\,\tfrac{\sqrt t}{\sqrt{r(t-r)}}, \quad b_i=(\xi_i-u-sv)\,\tfrac{\sqrt t}{\sqrt{s(t-s)}}
\]
for $i\in\{1, \ldots, k\}$. Hence by Lemma \ref{TongIn},
 \begin{equation}\label{ss2}
\begin{aligned}
\varphi(s,r,u,v)&\geq\sum_{i=1}^{k} \sum_{j=1}^{k} (h(\xi_i+)-h(\xi_i-))\, (h(\xi_j+)-h(\xi_j-)) \\
&\qquad\qquad\qquad\qquad \times  \frac{1}{2\pi}\,e^{-\frac{a_i^2}{2}}\, \int_0^{\rho} \frac{1}{\sqrt{1-x^2}}\, e^{-\frac{(b_j-a_i x)^2}{2 (1-x^2)}} dx.
\end{aligned}
\end{equation}
The latter bound and the assumption that $h$ is increasing or decreasing yield \eqref{o8}.

Next, let $i\in\{1, \ldots, k\}$. Since $h$ is increasing or decreasing we conclude from \eqref{ss2} in particular, that
\begin{equation}\label{ss1}
\varphi(s,r,u,v)\geq (h(\xi_i+)-h(\xi_i-))^2\,  \frac{1}{2\pi}\,e^{-\frac{a_i^2}{2}}\, \int_0^{\rho} \frac{1}{\sqrt{1-x^2}}\, e^{-\frac{(b_i-a_ix)^2}{2 (1-x^2)}} dx.
\end{equation}

Let   $s,r\in [t/4, t/2]$,  $u\in [\xi_i, \xi_i+\sqrt t]$, $v\in [0, 1/\sqrt t]$.
We then have
\[
\max(|a_i|, |b_i|)\leq \bigl(\sqrt t+\tfrac{t}{2}\,  \tfrac {1}{\sqrt t}\bigr)\, \tfrac{\sqrt t}{\sqrt{\tfrac{t}{4} \, \tfrac{3t}{4} }} =2\sqrt 3.
\]
Thus,
\[
e^{-\frac{a_i^2}{2}}\geq e^{-6}.
\]
Moreover, for all $x\in[0, \rho]$,
\[
e^{-\frac{(b_i-a_ix)^2}{2(1-x^2) }}
\geq e^{-\frac{(|a_i|+|b_i|)^2}{2(1-x^2) }}\geq e^{-\frac{24}{1-x^2}},                                
\]
and
\[
\rho = \tfrac{\sqrt{t-\max(s,r)}\, \sqrt{\min(s,r)}}{\sqrt{\max(s,r)}\,\sqrt{t-\min(s,r)}} \geq \tfrac{\sqrt{\tfrac{t}{2}}\, \sqrt{\tfrac{t}{4}}}{\sqrt{\tfrac{t}{2}}\,\sqrt{\tfrac{3t}{4}}}=\tfrac{1}{\sqrt 3}.
\]
Hence we conclude that
\[
\frac{1}{2\pi}\,e^{-\frac{a_i^2}{2}}\, \int_0^{\rho} \frac{1}{\sqrt{1-x^2}}\, e^{-\frac{(b_i-a_ix)^2}{2 (1-x^2)}} dx\geq \frac{1}{2\pi}\,e^{-6}\, \int_0^{1/\sqrt 3} \frac{1}{\sqrt{1-x^2}}\, e^{-\frac{24}{1-x^2}} dx = 8\kappa.
\]
The latter estimate together with  \eqref{ss1} implies \eqref{o9} and completes the proof of the lemma.
\end{proof}

\subsection{Proof of Theorem \ref{Thm2}}\label{sub22}

In the following let $(\Omega,\F,\PP)$ be a probability space and let $W\colon [0,1]\times \Omega\to \R$ be a standard Brownian motion on $[0,1]$. 

For the proof of Theorem~\ref{Thm2} it suffices to show that for every $\varepsilon\in (0,\infty)$  there exists $c\in (0,\infty) $ such that for all $n\in 2\N$ with $n\ge 6$ and all $t_1, \ldots, t_n\in [0,1]$ with 
\begin{equation}\label{disc}
0<t_1<\ldots<t_n=1
\end{equation}
 and 
\begin{equation}\label{b5}
2/n,4/n,\dots, 1\in\{t_1, \ldots, t_n\}
\end{equation}
we have
\begin{equation}\label{finale}
\inf_{        g \colon \R^n \to \R \text{ measurable} \\
       }	 \EE\Bigl[\Bigl|\int_0^1 \mmu(W_s) \, ds-g(W_{t_1}, \ldots, W_{t_n})\Bigr|^p\Bigr]^{1/p}\geq \begin{cases} \frac{c}{n^{3/4}}, & \text{ if }p=2,\\  \frac{c}{n^{3/4+\varepsilon}}, & \text{ if }p=1.\end{cases}
\end{equation}

In the sequel we fix $n\in 2\N$ with $n\ge 6$ and $t_1, \ldots, t_n\in [0,1]$ with~\eqref{disc} and~\eqref{b5}. Moreover, we put $t_0=0$.
Let $\overline W\colon [0,1]\times \Omega\to\R$ denote the piecewise linear interpolation of $W$ on $[0,1]$ at the points $t_0, \ldots, t_{n}$, i.e. 
\[
\overline W_t=\tfrac{t-t_{i-1}}{t_i-t_{i-1}}\,W_{t_i}+\tfrac{t_i-t}{t_i-t_{i-1}}\, W_{t_{i-1}}, \quad t\in [t_{i-1}, t_i],
\]
for $i\in\{1, \ldots, n\}$, and  put
\[
B=W-\overline W.
\]
Observe that $(B_t)_{t\in [t_{i-1}, t_i]}$ is a Brownian bridge on $[t_{i-1}, t_i]$ for every $i\in\{1, \ldots, n\}$. Furthermore, 
$(B_t)_{t\in [t_{0}, t_1]}, \ldots,(B_t)_{t\in [t_{n-1}, t_{n}]}, \overline W $  are independent. 
Without loss of generality we may assume that $(\Omega,\mathcal F,\PP)$ is rich enough to carry 
 for every $i\in\{1, \ldots, n\}$ a Brownian bridge $(\widetilde B_t)_{t\in [t_{i-1}, t_i]}$   on $[t_{i-1}, t_i]$
such that 
$(\widetilde B_t)_{t\in [t_{0}, t_1]}, \ldots,(\widetilde B_t)_{t\in [t_{n-1}, t_{n}]}, W $ are independent. Put $\widetilde B=(\widetilde B_t)_{t\in[0,1]}$ and
 define a Brownian motion  $\widetilde W\colon [0,1]\times \Omega\to\R$  by
\[
\widetilde W= \overline W+\widetilde B.
\]

\begin{lemma}\label{neu1}
Assume that $\mu$ satisfies ($\mu1$). Then for all measurable $g\colon\R^{n}\to\R$ and all $p\in[1,\infty)$, 
\begin{equation}\label{L1}
\begin{aligned}
  \EE\Bigl[\Bigl|\int_0^1 \mmu(W_s) \, ds-g(W_{t_1}, \ldots, W_{t_n})\Bigr|^p\Bigr]^{1/p}
   \geq \frac{1}{2} \, 
   \EE\Bigl[\Bigl|\int_0^1 \mmu(W_s) \, ds-\int_0^1 \mmu(\widetilde W_s) \, ds\Bigr|^p\Bigr]^{1/p}.
\end{aligned}
\end{equation}
\end{lemma}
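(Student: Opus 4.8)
The plan is to apply Lemma~\ref{symm} with a suitable choice of the random variables $V_1$, $V_2$, $V_2'$ and the function $\Phi$. The key observation is that the desired inequality~\eqref{L1} is exactly the conclusion of Lemma~\ref{symm} once we identify the right conditional-independence structure; this is precisely why $\widetilde W$ was built by replacing the piecewise Brownian bridge $B$ of $W$ by an independent copy $\widetilde B$ while keeping the piecewise linear interpolation $\overline W$ fixed.

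First I would set $V_1 = (W_{t_1},\dots,W_{t_n})$, which (since $t_n=1$ and $t_0=0$) determines $\overline W$ entirely, and set $V_2 = B = (B_t)_{t\in[0,1]}$ and $V_2' = \widetilde B = (\widetilde B_t)_{t\in[0,1]}$, viewed as random variables with values in $C([0,1])$. By construction $\PP^{\widetilde B} = \PP^{B}$, and $B$ is independent of $\overline W$ while $\widetilde B$ is independent of $W$ (hence of $\overline W = $ a function of $V_1$); since $V_1$ is a measurable function of $\overline W$, this gives $\PP^{(V_1,V_2)} = \PP^{V_1}\otimes\PP^B = \PP^{V_1}\otimes\PP^{\widetilde B} = \PP^{(V_1,V_2')}$, so the hypothesis of Lemma~\ref{symm} is met. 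Next I would define $\Phi\colon \R^n\times C([0,1])\to\R$ by
\[
\Phi(y,b) = \int_0^1 \mu\bigl(\ell_y(s) + b_s\bigr)\, ds,
\]
where $\ell_y$ denotes the piecewise linear interpolant through the points $(t_i,y_i)$ (with $t_0=0$, and $y_0$ recovered from $W_0=x_0=0$, or simply using the fact that $\overline W_0 = 0$ is forced). Then $\Phi(V_1,V_2) = \int_0^1\mu(\overline W_s + B_s)\,ds = \int_0^1\mu(W_s)\,ds$ and $\Phi(V_1,V_2') = \int_0^1\mu(\overline W_s + \widetilde B_s)\,ds = \int_0^1\mu(\widetilde W_s)\,ds$. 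Finally, for arbitrary measurable $g\colon\R^n\to\R$ set $\varphi(V_1) = g(W_{t_1},\dots,W_{t_n}) = g(V_1)$; Lemma~\ref{symm} then yields exactly~\eqref{L1}.

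The only points needing care are measurability and integrability. For measurability, I would note that $\Phi$ is well-defined and jointly measurable because $(y,b)\mapsto \ell_y(s)+b_s$ is jointly continuous for each fixed $s$, $\mu$ is Borel (being piecewise Lipschitz by ($\mu1$)), and the integral over $s\in[0,1]$ of a bounded-below-by-linear-growth integrand is measurable by Tonelli/Fubini after a truncation argument. For integrability, the linear growth of $\mu$ from Lemma~\ref{basics} together with the fact that $\sup_{s\in[0,1]}|W_s|$ and $\sup_{s\in[0,1]}|\widetilde W_s|$ have finite moments of all orders ensures $\Phi(V_1,V_2), \Phi(V_1,V_2')\in L_p$, so all expectations in Lemma~\ref{symm} are finite and the inequality is non-vacuous. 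I do not expect any serious obstacle here; the real content of Theorem~\ref{Thm2} lies in lower bounding the right-hand side of~\eqref{L1}, which is the job of the subsequent lemmas and the covariance bound in Lemma~\ref{BrBr}. If one wanted to be maximally economical, one could even avoid introducing $C([0,1])$-valued random variables by invoking the already-cited~\cite[Lemma 3]{MGRY2018} directly, but spelling out the choice of $\Phi$, $V_1$, $V_2$, $V_2'$ as above is cleaner and self-contained.
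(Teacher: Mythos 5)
Your proof is correct and follows essentially the same route as the paper: both reduce the claim to Lemma~\ref{symm} via the decomposition $W=\overline W+B$, $\widetilde W=\overline W+\widetilde B$, the identity $\PP^{(\overline W,B)}=\PP^{(\overline W,\widetilde B)}$, and a measurable functional $\Phi$ representing the occupation integral. The only (immaterial) difference is that the paper establishes Borel measurability of $\Phi$ by realizing the integral as an a.s.\ limit of Riemann sums on the set of paths spending zero time in $\{\xi_1,\dots,\xi_k\}$, whereas you invoke joint measurability of $(s,y,b)\mapsto\mu(\ell_y(s)+b_s)$ plus Fubini--Tonelli directly; both are valid.
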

\begin{proof} For convenience of the reader we first show that $\int_0^1 \mu(W_s)\, ds$ is well-defined and a random variable. By ($\mu1$) the function $\mu$ is piecewise continuous and therefore Borel-measurable. Thus, for every $f\in C([0,1],\R)$ the function $\mu\circ f\colon [0,1]\to \R$ is Borel-measurable. Moreover, by Lemma~\ref{basics} there exists $c\in (0,\infty)$ such that for every $t\in[0,1]$ we have $|\mu(f(t))|\le c(1+|f(t)|) \le c(1+\|f\|_\infty)$, which shows that $\mu\circ f$ is bounded. Hence $\int_0^1 \mu(f(s))\, ds$ exists for every $f\in C([0,1],\R)$ and therefore, $\int_0^1 \mu(W_s)\, ds\colon \Omega\to \R$ is well-defined. Since $W\colon [0,1]\times \Omega\to \R$ is measurable we have measurability of $\int_0^1 \mu(W_s)\, ds$ as claimed.  

Next we show that there exists a Borel-measurable function $\Phi\colon C([0,1],\R)\times C([0,1],\R)\to\R$  such that 
\begin{equation}\label{v1}
\int_0^1 \mmu(W_s) \, ds=\Phi(\overline W, B)\,\text{ and }\, \int_0^1 \mmu(\widetilde W_s) \, ds=\Phi(\overline W, \widetilde B)\,\text{ almost surely}.
\end{equation}
To this end put $\mathcal D = \{\xi_1,\dots,\xi_k\}$ and note that the function $[0,1]\times C([0,1],\R)\ni (s,f)\mapsto f(s)\in \R$ is continuous. Thus, the mapping $[0,1]\times C([0,1],\R)\ni (s,f)\mapsto 1_\mathcal{D}(f(s))\in \R$ is Borel-measurable, which implies the Borel-measurability of the mapping
\[
T\colon C([0,1],\R)\to \R,\,\, f\mapsto \int_0^1 1_\mathcal{D}(f(s))\, ds.
\]
Hence, $T^{-1}(\{0\})$ is a Borel-subset of $C([0,1],\R)$. Let $f\in  T^{-1}(\{0\})$. Then $\lambda(\{s\in[0,1]\colon f(s)\in \mathcal D\}) = 0$. By ($\mu1$) we know that $\{s\in[0,1]\colon \mu\circ f\text{ is discontinuous in }s\}\subset \{s\in[0,1]\colon f(s)\in \mathcal D\}$. Hence $\mu\circ f$ is a bounded Riemann-integrable function. Thus
\begin{equation}\label{v2}
\forall f\in T^{-1}(\{0\})\colon\,\, \int_0^1 \mu(f(s))\, ds = \lim_{m\to \infty} R_m(f)
\end{equation}
with
\[
R_m\colon C([0,1],\R)\to \R,\,\, f\mapsto \frac{1}{m}\sum_{i=1}^m \mu(f(i/m)).
\]
Clearly, the mappings $R_m, m\in\N$, are Borel-measurable, and therefore the mappings
\[
S_m:= R_m\cdot 1_{T^{-1}(\{0\})}\colon C([0,1],\R)\to \R,\,m\in\N,                            
\] 
are Borel-measurable as well. Using~\eqref{v2} we obtain that the limit
\[
S:=\lim_{m\to\infty}S_m\colon C([0,1],\R)\to \R
\]
exists, is Borel-measurable and satisfies $S(f) = \int_0^1\mu(f(s))\, ds$ for all $f\in T^{-1}(\{0\})$. Note that for $V=W,\widetilde W$,
\[
\EE\Bigl[\int_0^1 1_\mathcal{D} (V_s)\, ds \Bigr] = \int_0^1 \PP(V_s\in \mathcal D)\, ds = 0.
\]
Hence $\PP(V\in  T^{-1}(\{0\})) = 1$ and we conclude that $S(V) = \int_0^1 \mmu(V_s) \, ds$ almost surely. Thus,~\eqref{v1} holds  for the Borel-measurable function 
\[
\Phi\colon C([0,1],\R)\times C([0,1],\R)\to\R,\,\, (f,g) \mapsto S(f+g).
\]

Let $g\colon\R^{n}\to\R$ be measurable. Clearly, there exists a measurable function $\varphi\colon C([0,1],\R)\to\R$ such that
\begin{equation}\label{z3}
g(W_{t_1}, \ldots, W_{t_{n}})=\varphi(\overline W).
\end{equation}
Since $\overline W$ and $B$ are independent, $\overline W$ and $\widetilde B$ are independent, and $\PP^{B}=\PP^{\widetilde B}$, we have
\begin{equation}\label{eqdis}
\PP^{(\overline W, B)}=\PP^{(\overline W, \widetilde B)}.
\end{equation}
We may thus apply Lemma \ref{symm} with $\Omega_1=\Omega_2=C([0,1],\R)$, $V_1=\overline W$, $V_2=B$, $V_2'=\widetilde B$, $\Phi$ as in~\eqref{v1} and $ \varphi$ as in~\eqref{z3} to obtain~\eqref{L1} for every $p\in[1,\infty)$.
 \end{proof}

Throughout the following we use $c,c_1,c_2,\ldots \in (0,\infty)$ to denote positive constants that may change their values in every appearance but neither depend on $n$ nor on the discretization points $t_0,\dots,t_n$.

Next, we provide an upper bound for  the right hand side of~\eqref{L1} in the case $p=4$.

\begin{lemma}\label{lemmax1} Assume that $\mu$ satisfies ($\mu1$). Then for every $\delta\in (0,\infty)$ there exists $c\in (0,\infty)$ such that
\[
 \EE\Bigl[\Bigl|\int_0^1 (\mmu(W_s) - \mmu(\widetilde W_s)) \, ds\Bigr|^4\Bigr]^{1/4} \le 
 \frac{c}{n^{3/4-\delta}}.
\]
\end{lemma}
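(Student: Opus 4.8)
The plan is to bound the quantity
\[
 \EE\Bigl[\Bigl|\int_0^1 (\mmu(W_s) - \mmu(\widetilde W_s)) \, ds\Bigr|^4\Bigr]^{1/4}
\]
by decomposing the integral over $[0,1]$ into the $n$ subintervals $[t_{i-1},t_i]$ determined by the discretization points and estimating each contribution separately. On the subinterval $[t_{i-1},t_i]$ the processes $W$ and $\widetilde W$ agree at the endpoints (they share the piecewise linear interpolation $\overline W$), differing only through the independent Brownian bridges $B$ and $\widetilde B$; this is the feature that makes each block small. Writing $\Delta_i = \int_{t_{i-1}}^{t_i} (\mmu(W_s) - \mmu(\widetilde W_s))\, ds$, I would use the triangle inequality in $L_4$, $\EE[|\sum_i \Delta_i|^4]^{1/4} \le \sum_i \EE[|\Delta_i|^4]^{1/4}$, and then show that $\EE[|\Delta_i|^4]^{1/4} \le c\,(t_i-t_{i-1})^{5/4}$ for a constant $c$ not depending on $n$ or the mesh. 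Since $\sum_i (t_i-t_{i-1}) = 1$ and each mesh panel has length at most... well, not obviously at most $c/n$, but one can split: the panels containing $2/n,4/n,\dots,1$ guarantee that consecutive such forced points are at distance $2/n$, so between any two forced points the total length is $2/n$; hence $\sum_i (t_i-t_{i-1})^{5/4} \le (\max_i(t_i-t_{i-1}))^{1/4}\sum_i(t_i-t_{i-1}) \le (2/n)^{1/4}$ once one observes the mesh is refined enough — actually more carefully, $\sum_i (t_i-t_{i-1})^{5/4}\le c/n^{1/4}$ using \eqref{b5}. Combining, the sum is $\le c/n^{1/4}\cdot$(something), but we want $c/n^{3/4-\delta}$, so the per-block exponent $5/4$ together with a summability argument must be arranged to give total exponent $3/4-\delta$; I expect the exponent $5/4$ to actually be slightly worse, more like $1+\theta$ for $\theta$ close to $1/2$, yielding $\sum (t_i-t_{i-1})^{1+\theta}\le (\max_i (t_i-t_{i-1}))^\theta \le (2/n)^\theta$, and with $\theta$ arbitrarily close to $1/2$ one loses a $\delta$.

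The core per-block estimate is therefore: $\EE[|\Delta_i|^4]^{1/4} \le c\,(t_i-t_{i-1})^{1+\theta}$ for every $\theta<1/2$ (with $c$ depending on $\theta$). To prove it, I would use Lemma~\ref{basics}, specifically the inequality $|\mu(x)-\mu(y)| \le c(|x-y| + \sum_{\ell=1}^k 1_{D_\ell}(x,y))$. This splits $|\mmu(W_s)-\mmu(\widetilde W_s)|$ into a Lipschitz part, controlled by $|W_s - \widetilde W_s| = |B_s - \widetilde B_s|$, and a "crossing" part counting whether $W_s$ and $\widetilde W_s$ lie on opposite sides of some $\xi_\ell$. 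For the Lipschitz part, $\int_{t_{i-1}}^{t_i}|B_s-\widetilde B_s|\,ds$ has $L_4$-norm of order $(t_i-t_{i-1})^{3/2}$ by standard Brownian-bridge moment bounds and Minkowski's integral inequality, which is even better than needed. For the crossing part one must bound $\EE\bigl[(\int_{t_{i-1}}^{t_i} 1_{D_\ell}(W_s,\widetilde W_s)\,ds)^4\bigr]^{1/4}$; the indicator is nonzero only when the pair straddles $\xi_\ell$, which on a short interval forces $B_s$ (or $\widetilde B_s$) relative to the known interpolated value to be within distance $\sim|B_s-\widetilde B_s|$ plus the excursion size near $\xi_\ell$. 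The key is that the Lebesgue measure of the set of $s\in[t_{i-1},t_i]$ with $W_s$ within distance $\eps$ of $\xi_\ell$ has expectation of order $(t_i-t_{i-1})\cdot\eps/\sqrt{t_i-t_{i-1}} = \eps\sqrt{t_i-t_{i-1}}$ — an occupation-time / local-time estimate — and one must combine this with the size of $|B-\widetilde B|$ to get the $(t_i-t_{i-1})^{1+\theta}$ bound with $\theta$ near $1/2$.

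The main obstacle I anticipate is precisely this crossing-time estimate: bounding the fourth moment of $\int_{t_{i-1}}^{t_i} 1_{D_\ell}(W_s,\widetilde W_s)\,ds$ in a way that is uniform over the (arbitrary) mesh and gives an exponent better than $1$ in $(t_i-t_{i-1})$. The natural tool is a bound on the occupation measure of a Brownian bridge near a point: $\EE\bigl[\lambda(\{s\in[0,h]: |\beta_s - a|\le \eps\})^q\bigr] \le c_q\,(\eps\sqrt h + \eps^2)^q$ type estimates, or equivalently $L_q$-bounds on the bridge's local time. One can either invoke known local-time moment bounds for the Brownian bridge or derive the needed estimate directly via a dyadic / chaining argument over the bridge's modulus of continuity. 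Conditioning on $\overline W$ (equivalently on $W_{t_{i-1}},W_{t_i}$) reduces everything on the block to a genuine Brownian bridge on $[t_{i-1},t_i]$ plus a deterministic affine shift, which is the right normalization; after rescaling the bridge to unit length one gets a clean scaling in $(t_i-t_{i-1})$, and the $\delta$-loss comes from pushing the local-time exponent up to its critical value. Once the per-block bound $\EE[|\Delta_i|^4]^{1/4}\le c(\theta)(t_i-t_{i-1})^{1+\theta}$ is in hand, summing via $\sum_i (t_i-t_{i-1})^{1+\theta} \le (\max_i(t_i-t_{i-1}))^\theta \le (2/n)^\theta$ and choosing $\theta = 1/2-\delta$ finishes the proof.
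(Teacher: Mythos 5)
There is a genuine gap, and it is structural: your very first step, the $L_4$ triangle inequality $\EE[|\sum_i \Delta_i|^4]^{1/4}\le\sum_i\EE[|\Delta_i|^4]^{1/4}$, discards the cancellation between blocks that the coupling is designed to produce, and no per-block estimate can recover the rate $3/4$ afterwards. Concretely, the sharp order of $\EE[|\Delta_i|^4]^{1/4}$ is $(t_i-t_{i-1})^{9/8}$, not $(t_i-t_{i-1})^{3/2-}$: the crossing integral $\int_{t_{i-1}}^{t_i}1_{D_\ell}(W_s,\widetilde W_s)\,ds$ is of order $t_i-t_{i-1}$ (not smaller) on the event that $W_{t_{i-1}}$ lies within $O(\sqrt{t_i-t_{i-1}})$ of $\xi_\ell$, an event of probability of order $\sqrt{t_i-t_{i-1}}$, so that $\EE[(\int 1_{D_\ell})^4]\asymp (t_i-t_{i-1})^{4+1/2}$. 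Even granting your optimistic exponent $1+\theta$ with $\theta$ close to $1/2$, the summation $\sum_i(t_i-t_{i-1})^{1+\theta}\le(\max_i(t_i-t_{i-1}))^\theta$ yields only the rate $1/2-\delta$, and with the correct exponent $9/8$ it yields only the rate $1/8$. You already sensed this mismatch (``the per-block exponent \dots must be arranged to give total exponent $3/4-\delta$''), but it cannot be arranged within this framework: one would need a per-block $L_4$ bound of order $(t_i-t_{i-1})^{7/4}$, which is false.

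The missing idea is that the block increments $J_i=\Delta_i$ are conditionally centered and conditionally independent across blocks given $\overline W$: since the bridges on distinct blocks are independent and $\PP^B=\PP^{\widetilde B}$, one has $\EE[J_i\mid\overline W]=0$ and hence $\EE[J_i\,J_{j_1}\cdots J_{j_m}]=0$ whenever $i\notin\{j_1,\dots,j_m\}$. This reduces $\EE[(\sum_iJ_i)^4]$ to $\sum_{i,j}\EE[J_i^2J_j^2]$ --- a square-root cancellation across blocks. The diagonal and near-diagonal terms are then handled by the crude bound $\EE[J_i^4]\le c\,(t_i-t_{i-1})^4$, while for far-apart blocks the paper exploits that $\bigl((W_s)_{s\in[t_{i-1},t_i]},(\widetilde W_s)_{s\in[t_{j-1},t_j]}\bigr)$ has the same law as $\bigl((W_s)_{s\in[t_{i-1},t_i]},(W_s)_{s\in[t_{j-1},t_j]}\bigr)$ and decouples the two crossing events by Gaussian tail estimates, obtaining $\EE[(\sum_iJ_i)^4]\le c\,\ln(n+1)/n^3$ and hence the claimed rate. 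Your occupation-time considerations are relevant for estimating the individual terms $\EE[J_i^2J_j^2]$, but only after the orthogonality step; without it the approach cannot reach the rate $3/4-\delta$.
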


\begin{proof}

 For all $i\in \{1,\dots,n\}$ put
\begin{equation}\label{nn1}
J_i = \int_{t_{i-1}}^{t_i} (\mmu(W_s) - \mmu(\widetilde W_s)) \, ds.
\end{equation}
First, we show that for all $m\in\N$, $i\in \{1,\dots,n\}$ and $j_1,\dots,j_m \in\{1,\dots,n\}\setminus\{i\}$ we have
\begin{equation}\label{v0}
\EE[J_{i}\cdot J_{j_1}\cdots J_{j_m}]  = 0.
\end{equation}

To this end let
$m\in\N$, $i\in \{1,\dots,n\}$ and $j_1,\dots,j_m \in\{1,\dots,n\}\setminus\{i\}$.
We note that by the construction of $\widetilde W$ and the independence of $B,\widetilde B,\overline W$ we have for $\PP^{\overline W}$-almost all $y\in C([0,1],\R)$,
\begin{equation}\label{vv0}
\PP^{(W,\widetilde W)|\overline W = y} = \PP^{(y+B,y+\widetilde B)}.
\end{equation}
Moreover, the processes 
\begin{equation}\label{vvv0}
(B_s,\widetilde B_s)_{s\in[t_{i-1},t_i]}\,\text{ and } \, (B_s,\widetilde B_s)_{s\in[0,1]\setminus [t_{i-1},t_i]}\,\text{  are independent}.
\end{equation}
 Consequently, for $\PP^{\overline W}$-almost all $y\in C([0,1],\R)$,
\begin{equation*}
\begin{aligned}
& \EE[J_{i}\cdot J_{j_1}\cdots J_{j_m}|\overline W = y]\\
 & \qquad= \EE\Bigl[\int_{t_{i-1}}^{t_i} (\mu(y_s +B_s) - \mu(y_s+\widetilde B_s)) \, ds\Bigr] \, \EE\Bigl[\prod_{\ell=1}^m\int_{t_{j_\ell-1}}^{t_{j_\ell}} (\mu(y_s +B_s) - \mu(y_s+\widetilde B_s)) \, ds\Bigr].
 \end{aligned}
\end{equation*}
Furthermore, since $\PP^B=\PP^{\widetilde B}$ we have
\[
\EE\Bigl[\int_{t_{i-1}}^{t_i} (\mu(y_s +B_s) - \mu(y_s+\widetilde B_s)) \, ds\Bigr] = \int_{t_{i-1}}^{t_i}( \EE[ \mu(y_s +B_s) ]- \EE[\mu(y_s+\widetilde B_s) ])\, ds = 0.
\]
Combining the latter two equalities and taking expectation with respect to $\PP^{\overline W}$ yields~\eqref{v0}. 

Next, put
\[
i^* = \min\{i\in\{1,\dots,n\}\colon t_{i} > 2/n\}
\]
and for all $i\in\{i^*,\dots,n-1\}$ put
\[
\ell_i = \max\{j\in\{i+1,\dots,n\}\colon t_j \le t_{i-1} + 4/n\}.
\]
Note that~\eqref{b5} and the assumption
$n\ge 6$
 imply that $i^*\le n-1$ and $t_{i^*-1} = 2/n$. Moreover,~\eqref{b5} implies that $t_{i+1}-t_{i-1} \le 4/n$ for all $i\in\{1,\dots,n-1\}$, which shows that all numbers $\ell_i$ are  well-defined.

Clearly,
\begin{equation}\label{c11}
\begin{aligned}
 \EE\Bigl[\Bigl|\int_{0}^1 (\mmu(W_s) - \mmu(\widetilde W_s)) \, ds\Bigr|^4\Bigr]& \le  4\,\EE\Bigl[\Bigl|\int_0^{2/n} (\mmu(W_s) - \mmu(\widetilde W_s)) \, ds\Bigr|^4\Bigr] + 4\,\EE\Bigl[\Bigl( \sum_{i=i^*}^n J_i\Bigr)^4\Bigr].
 \end{aligned}
\end{equation}
Moreover, with the help of~\eqref{v0} we obtain
\begin{equation}\label{d11x}
\begin{aligned}
&\EE\Bigl[\Bigl( \sum_{i=i^*}^n J_i\Bigr)^4\Bigr]\\ &
\qquad = \EE\Bigl[\sum_{i,j=i^*}^n J_i^2J_j^2\Bigr] =\sum_{i=i^*}^n \EE[J_i^4] + 2\sum_{i=i^*}^{n-1}\sum_{j=i+1}^n \EE[J_i^2J_j^2]\\
&\qquad =\sum_{i=i^*}^n  \EE[J_i^4] + 2\sum_{i=i^*}^{n-1}\EE\Bigl[J_i^2\Bigl(\sum_{j=i+1}^{\ell_i}J_j\Bigr)^2\Bigr] + 2\sum_{i=i^*}^{n-1}\sum_{j=\ell_i +1}^n \EE[J_i^2J_j^2]\\
& \qquad\le \sum_{i=i^*}^n \EE[J_i^4] + 2\sum_{i=i^*}^{n-1}\EE\Bigl[J_i^2\Bigl(\int_{t_i}^{t_{i-1}+4/n}|\mmu(W_s) - \mmu(\widetilde W_s)| \, ds\Bigr)^2\Bigr] 
+ 2\sum_{i=i^*}^{n-1}\sum_{j=\ell_i+1}^n \EE[J_i^2J_j^2].
 \end{aligned}
\end{equation}
Since $\mu$ satisfies a linear growth condition, see Lemma~\ref{basics}, and $\sup_{s\in[0,1]}\EE[|W_s|^4] <\infty$ we obtain that there exists $c\in (0,\infty)$ such that for all $0\le a \le b\le 1$, 
\[
 \EE\Bigl[\Bigl(\int_a^b|\mmu(W_s) - \mmu(\widetilde W_s)| \, ds\Bigr)^4\Bigr] \le c\,(b-a)^4.
\]
Employing the latter fact and H\"older's inequality and observing~\eqref{b5} we conclude from~\eqref{c11} and~\eqref{d11x} that there exist $c_1,c_2\in (0,\infty)$ such that
\begin{equation}\label{ux1}
\begin{aligned}
& \EE\Bigl[\Bigl|\int_0^1 (\mmu(W_s) - \mmu(\widetilde W_s)) \, ds\Bigr|^4\Bigr]\\  &\qquad \le c_1\,\Bigl((2/n)^4 
+\sum_{i=i^*}^n(t_i-t_{i-1})^4
+\sum_{i=i^*}^{n-1}(t_i-t_{i-1})^2 \cdot 
(t_{i-1}+4/n-t_i)^2\Bigr)\\
  &\qquad\qquad+8\sum_{i=i^*}^{n-1}\sum_{j=\ell_i+1}^n \EE[J_i^2J_j^2] \\
& \qquad \le \frac{c_2}{n^3} + 8\sum_{i=i^*}^{n-1}\sum_{j=\ell_i+1}^n \EE[J_i^2J_j^2].  
 \end{aligned}
\end{equation}

Note that for all $i\in\{i^*,\dots,n-1\}$ and $j\in\{\ell_i+1,\dots,n\}$ we have
\begin{equation}\label{extra}
t_{j-1} \ge t_j-2/n >t_{i-1}+2/n.
\end{equation}
Below we show that there exists $c\in (0,\infty)$ such that for  all $i\in\{i^*,\dots,n-1\}$ and $j\in\{\ell_i+1,\dots,n\}$,
\begin{equation}\label{nn3}
\begin{aligned}
\EE[J_i^2\, J_j^2]& \le c \,\Bigl(\frac{1}{n^5} +    \frac{\ln(n+1)}{n^3}\cdot\frac{(t_{i}-t_{i-1})(t_j-t_{j-1})}{\sqrt{(t_{i}-2/n)(t_{j}-2/n-t_{i-1})}}\Bigr).
\end{aligned}
\end{equation} 
Employing~\eqref{ux1} to~\eqref{nn3} we conclude that there exist $c_1,c_2\in (0,\infty)$ such that 
\begin{equation*}
\begin{aligned}
& \EE\Bigl[\Bigl|\int_0^1 (\mmu(W_s) - \mmu(\widetilde W_s)) \, ds\Bigr|^4\Bigr] \\
 &\qquad\qquad \le c_1\,\Bigl(\frac{1}{n^3} +  \frac{\ln(n+1)}{n^3}\,\sum_{i=i^*}^{n-1}\sum_{j=\ell_i+1}^n\frac{(t_{i}-t_{i-1})(t_j-t_{j-1})}{\sqrt{(t_{i}-2/n)(t_{j}-2/n-t_{i-1})}} \Bigr)\\
 & \qquad\qquad \le c_1\,\Bigl(\frac{1}{n^3} +   \frac{\ln(n+1)}{n^3}\,\sum_{i=i^*}^{n-1}\frac{t_{i}-t_{i-1}}{\sqrt{t_{i}-2/n}}\int_{t_{i-1}+2/n}^1 \frac{1}{\sqrt{y-2/n-t_{i-1}}}\, dy\Bigr)\\
 & \qquad\qquad \le 2c_1\,\Bigl(\frac{1}{n^3} +  \frac{\ln(n+1)}{n^3}\,\int_{2/n}^1 \frac{1}{\sqrt{x-2/n}}\, dx\Bigr) \le c_2 \, \frac{\ln(n+1)}{n^3}.
\end{aligned}
\end{equation*}
The latter estimate clearly implies the statement of the lemma.

It remains to prove~\eqref{nn3}.
 We first show that for all $i,j\in\{1,\dots,n\}$ with $i\neq j$ we have
\begin{equation}\label{p1}
\PP^{(W_s)_{s\in[t_{i-1},t_i]},(\widetilde W_s)_{s\in[t_{j-1},t_j]}} = \PP^{(W_s)_{s\in[t_{i-1},t_i]},( W_s)_{s\in[t_{j-1},t_j]}}.
\end{equation}
To this end we use~\eqref{vv0} and~\eqref{vvv0} to obtain
 that for
$\PP^{\overline W}$-almost all $y\in C([0,1],\R)$,
\begin{equation*}
\begin{aligned}
\PP^{(W_s)_{s\in[t_{i-1},t_i]},(\widetilde W_s)_{s\in[t_{j-1},t_j]}|\overline W = y} & = \PP^{(y_s+B_s)_{s\in[t_{i-1},t_i]},(y_s+\widetilde B_s)_{s\in[t_{j-1},t_j]}}\\
& = \PP^{(y_s+B_s)_{s\in[t_{i-1},t_i]}}\times \PP^{(y_s+\widetilde B_s)_{s\in[t_{j-1},t_j]}}\\
&= \PP^{(y_s+B_s)_{s\in[t_{i-1},t_i]}}\times \PP^{(y_s+ B_s)_{s\in[t_{j-1},t_j]}}\\
&= \PP^{(y_s+B_s)_{s\in[t_{i-1},t_i]},(y_s+ B_s)_{s\in[t_{j-1},t_j]}}\\
 & = \PP^{(W_s)_{s\in[t_{i-1},t_i]},( W_s)_{s\in[t_{j-1},t_j]}|\overline W = y},
\end{aligned}
\end{equation*}
which clearly implies~\eqref{p1}. Next, recall that $W$ and $\widetilde W$ coincide at the points $t_0,\dots,t_n$. The latter fact and~\eqref{p1} imply that for all $i,j\in\{1,\dots,n\}$ with $i\neq j$,
\begin{equation}\label{u1}
\begin{aligned}
\EE[J_i^2 J_j^2 ]
&  \le 4\, \EE\Bigl[\Bigl(\Bigl(\int_{t_{i-1}}^{t_i} (\mu(W_s)-\mu(W_{t_{i-1}}))\, ds\Bigr)^2 +\Bigl(\int_{t_{i-1}}^{t_i} (\mu(\widetilde W_s)-\mu(\widetilde W_{t_{i-1}}))\, ds\Bigr)^2\Bigr)\\
& \quad\qquad \times \Bigl(\Bigl(\int_{t_{j-1}}^{t_j} (\mu(W_s)-\mu(W_{t_{j-1}}))\, ds\Bigr)^2 +\Bigl(\int_{t_{j-1}}^{t_j} (\mu(\widetilde W_s)-\mu(\widetilde W_{t_{j-1}}))\, ds\Bigr)^2\Bigr)\Bigr]\\
& = 16\, \EE\Bigl[\Bigl(\int_{t_{i-1}}^{t_i} (\mu(W_s)-\mu(W_{t_{i-1}}))\, ds\Bigr)^2\, \Bigl(\int_{t_{j-1}}^{t_j} (\mu(W_s)-\mu(W_{t_{j-1}}))\, ds\Bigr)^2\Bigr].
\end{aligned}
\end{equation}

By~\eqref{basics1} in Lemma~\ref{basics} and~\eqref{b5} we see that there exists $c\in (0,\infty)$ such that for all $i\in\{1,\dots,n\}$,
\begin{equation}\label{u3}
\begin{aligned}
       \Bigl|\int_{t_{i-1}}^{t_i} (\mu(W_s)-\mu(W_{t_{i-1}}))\, ds \Bigr|&
\le c\,\Bigl(\int_{t_{i-1}}^{t_i}|W_s-W_{t_{i-1}}|\, ds +\int_{t_{i-1}}^{t_i} \sum_{\ell=1}^k 1_{D_\ell}(W_s,W_{t_{i-1}})\, ds \Bigr)
\end{aligned}
\end{equation}
as well as
\begin{equation}\label{uv3}
\int_{t_{i-1}}^{t_i} \sum_{\ell=1}^k 1_{D_\ell}(W_s,W_{t_{i-1}})\, ds\le \frac{c}{n}
\end{equation}
and
\begin{equation}\label{uv30}
\EE\Bigl[\Bigl(\int_{t_{i-1}}^{t_i} |W_s-W_{t_{i-1}}|\, ds\Bigr)^2\Bigr]
\le (t_i-t_{i-1})^3 \le \frac{c}{n^3}.
\end{equation}
Using~\eqref{u3} and~\eqref{uv3} we conclude that  there exists $c\in (0,\infty)$ such that for all $i,j\in\{1,\dots,n\}$,
\begin{equation}\label{uv1}
\begin{aligned}
& \Bigl(\int_{t_{i-1}}^{t_i} (\mu(W_s)-\mu(W_{t_{i-1}}))\, ds\Bigr)^2\, \Bigl(\int_{t_{j-1}}^{t_j} (\mu(W_s)-\mu(W_{t_{j-1}}))\, ds\Bigr)^2 \\
& \qquad \le c\,  \Bigl(\int_{t_{i-1}}^{t_i} |W_s-W_{t_{i-1}}|\, ds\Bigr)^2\, \Bigl(\int_{t_{j-1}}^{t_j} |W_s-W_{t_{j-1}}|\, ds\Bigr)^2 \\
& \qquad\qquad + \frac{c}{n^2}\,\Bigl( \Bigl(\int_{t_{i-1}}^{t_i} |W_s-W_{t_{i-1}}|\, ds\Bigr)^2 +
\Bigl(\int_{t_{j-1}}^{t_j} |W_s-W_{t_{j-1}}|\, ds\Bigr)^2  \Bigr) \\
& \qquad\qquad + \frac{c}{n^2} \sum_{\ell,r=1}^k \int_{t_{i-1}}^{t_i}  1_{D_r}(W_s,W_{t_{i-1}})\, ds
 \int_{t_{j-1}}^{t_j} 1_{D_\ell}(W_s,W_{t_{j-1}})\, ds.
\end{aligned}
\end{equation}
Employing~\eqref{uv30} and~\eqref{uv1} 
and observing the fact that for all $\ell\in\{1, \ldots, k\}$ and all $(u,v)\in D_{\ell}$ 
\[
|u-v|=|(u-\xi_{\ell})-(v-\xi_{\ell})|\geq |v-\xi_{\ell}|
\]
we obtain that  there exist $c_1,c_2\in (0,\infty)$ such that for all $i,j\in\{1,\dots,n\}$ with $i\neq j$, 
\begin{equation}\label{uv2}
\begin{aligned}
& \EE\Bigl[\Bigl(\int_{t_{i-1}}^{t_i} (\mu(W_s)-\mu(W_{t_{i-1}}))\, ds\Bigr)^2\, \Bigl(\int_{t_{j-1}}^{t_j} (\mu(W_s)-\mu(W_{t_{j-1}}))\, ds\Bigr)^2 \Bigr]\\
& \qquad \le c_1\,  \EE\Bigl[\Bigl(\int_{t_{i-1}}^{t_i} |W_s-W_{t_{i-1}}|\, ds\Bigr)^2\Bigr]\, \EE\Bigl[\Bigl(\int_{t_{j-1}}^{t_j} |W_s-W_{t_{j-1}}|\, ds\Bigr)^2 \Bigr]\\
& \qquad\qquad + \frac{c_1}{n^2}\, \EE\Bigl[ \Bigl(\int_{t_{i-1}}^{t_i} |W_s-W_{t_{i-1}}|\, ds\Bigr)^2 +
\Bigl(\int_{t_{j-1}}^{t_j} |W_s-W_{t_{j-1}}|\, ds\Bigr)^2  \Bigr] \\
& \qquad\qquad + \frac{c_1}{n^2} \sum_{\ell,r=1}^k \int_{t_{i-1}}^{t_i} \int_{t_{j-1}}^{t_j} \PP\bigl(|W_s-W_{t_{i-1}}| \ge |W_{t_{i-1}}-\xi_\ell|,\\
&  \qquad\qquad  \qquad\qquad   \qquad\qquad \qquad\qquad |W_t-W_{t_{j-1}}| \ge |W_{t_{j-1}}-\xi_r|\bigr)  \, dt\, ds\\
& \qquad \le \frac{c_2}{n^5} + \frac{c_1}{n^2} \sum_{\ell,r=1}^k \int_{t_{i-1}}^{t_i} \int_{t_{j-1}}^{t_j} \PP\bigl(|W_s-W_{t_{i-1}}| \ge |W_{t_{i-1}}-\xi_\ell|,\\
&  \qquad\qquad  \qquad\qquad   \qquad\qquad \qquad\qquad |W_t-W_{t_{j-1}}| \ge |W_{t_{j-1}}-\xi_r|\bigr)  \, dt\, ds.
\end{aligned}
\end{equation}

Put
\[
\alpha_n = 2 \sqrt{\ln(n+1)/n}.
\]
Observing~\eqref{b5} we obtain by standard estimates for Gaussian probabilities that
for all $\ell,r\in\{1,\dots,k\}$, all $i,j\in\{1,\dots,n\}$ with $i< j$ and all $s\in[t_{i-1},t_i]$, $t\in[t_{j-1},t_j]$ we have
\begin{equation}\label{more1}
\begin{aligned}
& \PP\bigl(|W_s-W_{t_{i-1}}| \ge |W_{t_{i-1}}-\xi_\ell|, |W_t-W_{t_{j-1}}| \ge |W_{t_{j-1}}-\xi_r|\bigr)\\
& \qquad\quad \le  \PP\bigl(|W_s-W_{t_{i-1}}| \ge \alpha_n\bigr) +  \PP\bigl(|W_t-W_{t_{j-1}}| \ge \alpha_n\bigr)\\
& \qquad\qquad \qquad + \PP\bigl( |W_{t_{i-1}}-\xi_\ell|\le \alpha_n, |W_{t_{j-1}}-\xi_r|\le \alpha_n\bigr)\\
& \qquad\quad \le 4 \PP\bigl(W_1 \ge \sqrt{n/2}\,\alpha_n\bigr) + \frac{2\alpha_n^2}{\pi\,\sqrt{t_{i-1}\,(t_{j-1}-t_{i-1})}}\\
& \qquad\quad \le 4  \PP\bigl(W_1 \ge \sqrt{2\,\ln(n+1)}\bigr) + \frac{8\ln(n+1)}{\pi\,n\sqrt{t_{i-1}\,(t_{j-1}-t_{i-1})}}\\
& \qquad\quad \le \frac{4}{\sqrt{4\pi\,\ln(n+1)}(n+1)}+   \frac{8\ln(n+1)}{\pi\,n\sqrt{t_{i-1}\,(t_{j-1}-t_{i-1})}}.
\end{aligned}
\end{equation}
Using~~\eqref{b5} and~\eqref{extra} 
we conclude from~\eqref{more1}  that there exists $c\in (0,\infty)$ such that for all $\ell,r\in\{1,\dots,k\}$, all $i\in\{i^*,\dots,n-1\}$ and $j\in\{\ell_i+1,\dots,n\}$  we have
\begin{equation}\label{more2}
\begin{aligned}
& \int_{t_{i-1}}^{t_i} \int_{t_{j-1}}^{t_j} \PP\bigl(|W_s-W_{t_{i-1}}| \ge |W_{t_{i-1}}-\xi_\ell|,|W_t-W_{t_{j-1}}| \ge |W_{t_{j-1}}-\xi_r|\bigr)  \, dt\, ds\\
& \qquad\quad \le \frac{c}{n^3} + \frac{c\,\ln(n+1)}{n}\,\frac{(t_i-t_{i-1})(t_j-t_ {j-1})}{\sqrt{(t_{i}-2/n)\,(t_{j}-2/n-t_{i-1})}}.
\end{aligned}
\end{equation}
Finally, combining~\eqref{u1},~\eqref{uv2} and~\eqref{more2} yields~\eqref{nn3}, which completes the proof of the lemma.
\end{proof}

We 
proceed by providing a lower bound for the right hand side of
 \eqref{L1} for the case $p=2$.

 \begin{lemma}\label{lemmax2} Assume that $\mu$ satisfies ($\mu1$), ($\mu3$) and is increasing or decreasing. 
 Then there exists $c\in(0, \infty)$ such that  
\begin{equation}\label{lb11}
 \EE\Bigl[\Bigl|\int_0^1 (\mmu(W_s) - \mmu(\widetilde W_s)) \, ds\Bigr|^2\Bigr] \ge 
 \frac{c}{n^{3/2}}.
\end{equation}
 \end{lemma}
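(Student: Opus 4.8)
The plan is to reduce the left-hand side of~\eqref{lb11} to a sum of block contributions and estimate each block from below via Lemma~\ref{BrBr}. Put $J_i = \int_{t_{i-1}}^{t_i}(\mmu(W_s)-\mmu(\widetilde W_s))\,ds$ for $i\in\{1,\dots,n\}$, so that $\int_0^1(\mmu(W_s)-\mmu(\widetilde W_s))\,ds = \sum_{i=1}^n J_i$ (recall $t_0=0$, $t_n=1$). Exactly as in the proof of Lemma~\ref{lemmax1} (condition on $\overline W=y$; the blocks of the bridge increments $B,\widetilde B$ over disjoint intervals are independent, and $\EE[\int_{t_{i-1}}^{t_i}(\mu(y_s+B_s)-\mu(y_s+\widetilde B_s))\,ds]=0$ because $\PP^B=\PP^{\widetilde B}$) one gets $\EE[J_iJ_j]=0$ for $i\neq j$, hence
\[
\EE\Bigl[\Bigl|\int_0^1(\mmu(W_s)-\mmu(\widetilde W_s))\,ds\Bigr|^2\Bigr]=\sum_{i=1}^n\EE[J_i^2].
\]
(Integrability is no issue: $\mu$ has linear growth by Lemma~\ref{basics} and $W,\widetilde W$ have finite moments of every order.) It therefore suffices to bound each $\EE[J_i^2]$ from below and sum.

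For the per-block bound, fix $i$ and write, for $u\in[0,t_i-t_{i-1}]$,
\[
W_{t_{i-1}+u}=U+uV+B_{t_{i-1}+u},\qquad \widetilde W_{t_{i-1}+u}=U+uV+\widetilde B_{t_{i-1}+u},
\]
with $U=W_{t_{i-1}}$, $V=(W_{t_i}-W_{t_{i-1}})/(t_i-t_{i-1})$, and $(B_{t_{i-1}+u})_u$, $(\widetilde B_{t_{i-1}+u})_u$ Brownian bridges on $[0,t_i-t_{i-1}]$. Since $W$ has independent increments, $U$ and $V$ are independent, both are functions of $\overline W$, and the two (shifted) bridges are independent of each other and of $\overline W$; hence the four objects are mutually independent, and $J_i=\int_0^{t_i-t_{i-1}}(\mu(U+uV+B_{t_{i-1}+u})-\mu(U+uV+\widetilde B_{t_{i-1}+u}))\,du$. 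Pick $\ell\in\{1,\dots,k\}$ with $\mu(\xi_\ell+)\neq\mu(\xi_\ell-)$, which exists by ($\mu3$), and apply Lemma~\ref{BrBr} with $h=\mu$ (admissible by ($\mu1$) and the monotonicity hypothesis), $t=t_i-t_{i-1}$ and index $\ell$:
\[
\EE[J_i^2]\ge \kappa\,(\mu(\xi_\ell+)-\mu(\xi_\ell-))^2\,(t_i-t_{i-1})^2\,\PP\bigl(W_{t_{i-1}}\in[\xi_\ell,\xi_\ell+\sqrt{t_i-t_{i-1}}]\bigr)\,\PP\bigl(V\in[0,1/\sqrt{t_i-t_{i-1}}]\bigr).
\]

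Now estimate the two probabilities. Since $\sqrt{t_i-t_{i-1}}\,V$ is standard normal, $\PP(V\in[0,1/\sqrt{t_i-t_{i-1}}])=\Phi(1)-\tfrac12>0$ is a universal constant. For the other factor restrict to indices in $S=\{i\in\{1,\dots,n\}\colon t_{i-1}\ge 1/4\}$: for $i\in S$ the interval $[\xi_\ell,\xi_\ell+\sqrt{t_i-t_{i-1}}]$ lies in $[\xi_\ell,\xi_\ell+1]$ (as $t_i-t_{i-1}\le 1$) and $W_{t_{i-1}}\sim N(0,t_{i-1})$ with $t_{i-1}\in[1/4,1]$, so its density is bounded below on that interval by $(2\pi)^{-1/2}e^{-2(|\xi_\ell|+1)^2}$; integrating gives $\PP(W_{t_{i-1}}\in[\xi_\ell,\xi_\ell+\sqrt{t_i-t_{i-1}}])\ge c\sqrt{t_i-t_{i-1}}$. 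Combining, $\EE[J_i^2]\ge c\,(t_i-t_{i-1})^{5/2}$ for every $i\in S$, with $c\in(0,\infty)$ depending neither on $n$ nor on $t_0,\dots,t_n$.

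Finally I would sum over $S$. By~\eqref{b5} the grid contains all even multiples of $1/n$, so $t_j-t_{j-1}\le 2/n$ for all $j$; since $t_0=0\notin S$, the least index $i_0$ of $S$ satisfies $t_{i_0-2}<1/4$, hence $\min_{i\in S}t_{i-1}=t_{i_0-1}<1/4+2/n$ and $\sum_{i\in S}(t_i-t_{i-1})=1-t_{i_0-1}>3/4-2/n\ge 5/12$ for $n\ge 6$ (in particular $S\neq\emptyset$). Jensen's inequality for the convex function $x\mapsto x^{5/2}$ together with $|S|\le n$ gives
\[
\sum_{i\in S}(t_i-t_{i-1})^{5/2}\ge |S|\Bigl(\tfrac{1}{|S|}\sum_{i\in S}(t_i-t_{i-1})\Bigr)^{5/2}=|S|^{-3/2}\Bigl(\sum_{i\in S}(t_i-t_{i-1})\Bigr)^{5/2}\ge n^{-3/2}\,(5/12)^{5/2},
\]
so that $\EE[|\int_0^1(\mmu(W_s)-\mmu(\widetilde W_s))\,ds|^2]\ge\sum_{i\in S}\EE[J_i^2]\ge c\,n^{-3/2}$, which is~\eqref{lb11}. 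The only delicate point is the interplay of the last two steps: the bound $\EE[J_i^2]\gtrsim(t_i-t_{i-1})^{5/2}$ is available only for blocks located at a time $t_{i-1}$ bounded away from $0$ (where the Gaussian density at $\xi_\ell$ is bounded below), and one must check that such blocks still carry a constant fraction of the time horizon so that the power-mean (Jensen) step yields the rate $n^{-3/2}$ and not something smaller.
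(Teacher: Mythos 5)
Your proof is correct and follows essentially the same route as the paper's: block decomposition with orthogonality of the $J_i$ via the conditioning-on-$\overline W$ argument from Lemma~\ref{lemmax1}, the per-block lower bound from Lemma~\ref{BrBr} with a Gaussian density lower bound for blocks whose left endpoint is bounded away from $0$, and a power-mean (H\"older/Jensen) step to convert $\sum(t_i-t_{i-1})^{5/2}$ into $n^{-3/2}$. The only cosmetic difference is that you keep blocks with $t_{i-1}\ge 1/4$ while the paper uses the grid point $t_r=1/2$ guaranteed by~\eqref{b5}; both work.
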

 
 \begin{proof}
 Recall the definition~\eqref{nn1} of $J_1,\dots,J_n$ in the proof of Lemma~\ref{lemmax1}. By~\eqref{v0} we have
 \begin{equation}\label{bb1}
  \EE\Bigl[\Bigl|\int_0^1 (\mmu(W_s) - \mmu(\widetilde W_s)) \, ds\Bigr|^2\Bigr] = \sum_{i=1}^n\EE[J_i^2].
 \end{equation}

Fix $i\in\{1,\ldots,n\}$,  observe that for all $s\in[t_{i-1}, t_i]$ it holds
\[
\overline W_s = W_{t_{i-1}}+\tfrac{s-t_{i-1}}{t_i-t_{i-1}}\,(W_{t_i}-W_{t_{i-1}})
\]
and put
\[
U=W_{t_{i-1}}, \quad V=\tfrac{1}{t_i-t_{i-1}}\,(W_{t_i}-W_{t_{i-1}}).
\]
We then have
\[
J_i=\int_{0}^{t_i-t_{i-1}}(\mmu(U+sV+B_{t_{i-1}+s})-\mmu(U+sV+\widetilde B_{t_{i-1}+s})) \, ds.
\]
Choose $\ell\in\{1,\dots,k\}$ according to condition ($\mu3$),
i.e. $\mu(\xi_{\ell}+)\not=\mu(\xi_{\ell}-)$.
Applying Lemma \ref{BrBr} we may then conclude  that
\begin{equation}\label{z5}
\begin{aligned}
 \EE[J_i^2]\geq c_1 \, (t_i-t_{i-1})^2\, \PP(W_{t_{i-1}}\in [\xi_\ell, \xi_\ell+\sqrt {t_i-t_{i-1}}])\, \PP(W_{t_i}-W_{t_{i-1}}\in [0, \sqrt {t_i-t_{i-1}}]),
\end{aligned}
\end{equation}
where 
\[
c_1=\kappa\, (\mmu(\xi_\ell+)-\mmu(\xi_\ell-))^2 >0
\] 
and $\kappa$ is given by \eqref{kappa}.
   Moreover,
\begin{equation}\label{h1}
\PP(W_{t_i}-W_{t_{i-1}}\in [0, \sqrt {t_i-t_{i-1}}])=\frac{1}{\sqrt{2\pi}}\int_0^1 e^{-\frac{x^2}{2}}dx\geq \frac{1}{\sqrt{2\pi e}}.
\end{equation}
Furthermore, if $t_{i-1}\ge 1/2$ then
\begin{equation}\label{z7}
\begin{aligned}
\PP(W_{t_{i-1}}\in [\xi_\ell, \xi_\ell+\sqrt {t_i-t_{i-1}}]) & = \int_{\xi_\ell/\sqrt{t_{i-1}}}^{(\xi_\ell+\sqrt{t_i-t_{i-1}})/\sqrt{t_{i-1}}}\frac{1}{\sqrt{2\pi}}e^{-x^2/2}\,dx\\
&\ge \frac{1}{\sqrt{2\pi}} e^{-(|\xi_\ell|+1)^2}\sqrt {t_i-t_{i-1}}.
\end{aligned}
\end{equation}

Let $r\in\{1, \ldots, n\}$ satisfy $t_r=1/2$. 
Using~\eqref{z5} to~\eqref{z7} we conclude that there exists $c\in (0,\infty)$ such that
\begin{equation}\label{z6}
\begin{aligned}
\sum_{i=1}^n \EE[J_i^2]\ge c\, \sum_{i=r+1}^n (t_i-t_{i-1})^{5/2}.
\end{aligned}
\end{equation}

By  H\"older's inequality,
\[
\frac {1}{2}=\sum_{i=r+1}^n  (t_i-t_{i-1})
\leq 
n^{3/5}\cdot \Bigl(\sum_{i=r+1}^n  (t_i-t_{i-1})^{5/2}\Bigr)^{2/5}.
\]
Thus,
\begin{equation}\label{oo33}
\sum_{i=r+1}^n  (t_i-t_{i-1})^{5/2}\geq \frac{1}{2^{5/2}n^{3/2}}.
\end{equation}
Hence there exists $c\in (0,\infty)$ such that
\[
\sum_{i=1}^n \EE[J_i^2]\geq c\,\frac{1}{n^{3/2}}.
\]
Combining~\eqref{bb1} and the latter inequality completes the proof of the lemma.
\end{proof}

We are ready to establish the estimate~\eqref{finale}. Clearly, the lower bound in~\eqref{finale} for the case $p=2$ is a consequence of~\eqref{L1} in Lemma~\ref{neu1} with $p=2$ and Lemma~\ref{lemmax2}.

For the case $p=1$ put
\[
Z = \int_0^1 (\mmu(W_s) - \mmu(\widetilde W_s)) \, ds
\]
and let $\delta\in (0,\infty)$.
Using Lemma~\ref{lemmax1} and Lemma~\ref{lemmax2} we obtain by H\"older's inequality that there exist $c_1,c_2\in (0,\infty)$ such that
\begin{equation}\label{star1}
c_1 n^{-3/2}\le \EE[Z^2] = \EE[|Z|^{2/3}\cdot|Z|^{4/3} ]\le \EE[|Z|]^{2/3}\cdot \EE[|Z|^4]^{1/3}
\le \EE[|Z|]^{2/3}\cdot \bigl(c_2 n^{-3+4\delta}\bigr)^{1/3}.
\end{equation}
Hence
\begin{equation}\label{star2}
\EE[|Z|] \ge  c_1^{3/2} n^{-9/4}\cdot c_2^{-1/2} n^{3/2-2\delta}= 
c_1^{3/2}c_2^{-1/2} n^{-3/4-2\delta}.
\end{equation}
The latter estimate with $\delta=\varepsilon/2$ and~\eqref{L1} in Lemma~\ref{neu1} with $p=1$ yield the lower bound in~\eqref{finale} for the case $p=1$,
which completes
the proof of~\eqref{finale} and hereby the proof of Theorem~\ref{Thm2}.

\subsection{Properties of  solutions of the equation $dX_t =\mu(X_t)\,dt + dW_t$}\label{sub23}

Throughout this section we consider the scalar SDE
\begin{equation}\label{generalsde}
dX_t = \mu(X_t)\, dt + dW_t
\end{equation}
and we provide properties of solutions $X$ of~\eqref{generalsde}, which are
used in the proof of Theorem~\ref{Thm1}.

The following lemma provides upper and lower estimates for the probability of 
$X_t$ taking values in bounded intervals.

\begin{lemma}\label{Xpg}
Assume that $\mu$ is measurable and bounded, let $(\Omega,\mathcal F,\PP)$ be a 
complete
probability space, let $W\colon [0,1]\times \Omega\to \R$ be a standard Brownian motion, let $x_0\in\R$ and let $X\colon  [0,1]\times \Omega\to \R$ be a strong solution of the SDE
 \eqref{generalsde} on the time-interval $[0,1]$ with driving Brownian motion $W$ and initial value $x_0$.
Moreover, let 
$\tau\in(0,1]$ and  $M\in(0, \infty)$. 
Then there exist $c_1,c_2\in(0, \infty)$
such that for all $t\in[\tau, 1]$ and all $x,y\in\R$ with $x\leq y$ it holds
\begin{equation}\label{d23}
\PP(X_t\in [x,y] )\leq c_1\, (y-x)    
\end{equation}
and  for all $t\in[\tau, 1]$ and all $x,y\in[-M, M]$ with $x\leq y$ it holds
\begin{equation}\label{d23a}
\PP(X_t\in [x,y])\geq c_2\, (y-x).
\end{equation}
\end{lemma}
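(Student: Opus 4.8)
The plan is to show that for $t\in[\tau,1]$ the variable $X_t$ has a Lebesgue density $p_t$ which is bounded above on $\R$ and bounded below on $[-M,M]$; \eqref{d23} and \eqref{d23a} then follow by integrating $p_t$ over $[x,y]$. Since $\mu$ is bounded, Novikov's condition holds, so I would pass to the equivalent measure $Q$ on $\F$ with $dQ/d\PP=\exp(-\int_0^1\mu(X_s)\,dW_s-\tfrac12\int_0^1\mu(X_s)^2\,ds)$; by Girsanov's theorem $\widetilde W_t:=X_t-x_0$ is a standard Brownian motion under $Q$ and the likelihood ratio on $\mathcal F_t$ is $L_t=\exp(\int_0^t\mu(x_0+\widetilde W_s)\,d\widetilde W_s-\tfrac12\int_0^t\mu(x_0+\widetilde W_s)^2\,ds)$. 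Disintegrating $\EE[\phi(X_t)]=\EE_Q[L_t\,\phi(x_0+\widetilde W_t)]$ against the $N(0,t)$-law of $\widetilde W_t$ under $Q$ gives $p_t(z)=\tfrac{1}{\sqrt{2\pi t}}e^{-(z-x_0)^2/(2t)}\,h_t(z-x_0)$ with $h_t(w):=\EE_Q[L_t\mid\widetilde W_t=w]$, so the task reduces to two-sided control of $h_t$.

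The key device for both bounds is the Brownian-bridge decomposition of $\widetilde W$ under the conditional law $Q(\cdot\mid\widetilde W_t=w)$: there is a Brownian motion $\beta$ with $d\widetilde W_s=\tfrac{w-\widetilde W_s}{t-s}\,ds+d\beta_s$, hence $\int_0^t\mu(x_0+\widetilde W_s)\,d\widetilde W_s=\int_0^t\mu(x_0+\widetilde W_s)\tfrac{w-\widetilde W_s}{t-s}\,ds+\int_0^t\mu(x_0+\widetilde W_s)\,d\beta_s$, the last term being a centered $L^2$-martingale. For \eqref{d23a} I would use conditional Jensen, $h_t(w)\ge\exp(\EE_Q[\log L_t\mid\widetilde W_t=w])$, combine it with $|\mu|\le\|\mu\|_\infty$ and the elementary bound $\int_0^t\tfrac{\EE|w-\widetilde W_s|}{t-s}\,ds\le|w|+\sqrt{\pi t/2}$ (writing $\widetilde W_s=\tfrac st w+Z_s$ with $Z$ a standard bridge), and note that the Gaussian prefactor is $\ge\tfrac{1}{\sqrt{2\pi}}e^{-(M+|x_0|)^2/(2\tau)}$ for $t\in[\tau,1]$, $|z|\le M$; this gives $\inf_{|z|\le M,\,t\in[\tau,1]}p_t(z)\ge c_2>0$, hence \eqref{d23a}.

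For \eqref{d23} Jensen points the wrong way, so I would instead use conditional Cauchy--Schwarz, $h_t(w)\le\EE_Q[L_t^2\mid\widetilde W_t=w]^{1/2}$, and the same decomposition to write $L_t^2=\mathcal E\bigl(2\!\int_0^\cdot\!\mu(x_0+\widetilde W_s)\,d\beta_s\bigr)_t\exp\bigl(2\!\int_0^t\!\mu(x_0+\widetilde W_s)\tfrac{w-\widetilde W_s}{t-s}\,ds+\int_0^t\!\mu(x_0+\widetilde W_s)^2\,ds\bigr)$. Estimating the exponent by $2\|\mu\|_\infty\int_0^t\tfrac{|w-\widetilde W_s|}{t-s}\,ds+\|\mu\|_\infty^2$, taking conditional expectation and applying Cauchy--Schwarz once more, the martingale factor contributes at most $\EE[\mathcal E(2\!\int\!\mu\,d\beta)_t^2]^{1/2}\le e^{2\|\mu\|_\infty^2}$ (since $\mathcal E(4\!\int\!\mu\,d\beta)$ is a true martingale and $\int_0^t\mu^2\le\|\mu\|_\infty^2$), and $\int_0^t\tfrac{|w-\widetilde W_s|}{t-s}\,ds\le|w|+\sqrt t\,\Xi$ with $\Xi:=\int_0^1\tfrac{|\widehat Z_u|}{1-u}\,du$, $\widehat Z$ a standard bridge on $[0,1]$. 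This produces $h_t(w)\le C\,e^{\|\mu\|_\infty|w|}$, hence $p_t(z)\le\tfrac{C}{\sqrt{2\pi\tau}}e^{-(z-x_0)^2/2+\|\mu\|_\infty|z-x_0|}\le c_1$ for all $z\in\R$, $t\in[\tau,1]$, which is \eqref{d23}.

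The step I expect to be the main obstacle is showing that $\Xi$ has finite exponential moments of \emph{every} order, so that the constant $C$ above is finite for arbitrary bounded $\mu$. For Brownian motion this is transparent: writing $B_v=\sqrt v\,U_{\ln(1/v)}$ with $U$ a stationary Ornstein--Uhlenbeck process, $\int_0^1\tfrac{|B_v|}{v}\,dv=\int_0^\infty e^{-r/2}|U_r|\,dr=2\,\EE[\,|U_R|\mid U\,]$ with $R\sim\mathrm{Exp}(1/2)$ independent of $U$, so $\EE[\exp(\lambda\int_0^1|B_v|v^{-1}\,dv)]\le\EE[\exp(2\lambda|N(0,1)|)]<\infty$ for every $\lambda$; the reduction of $\Xi$ to this, via $\widehat Z_u=B_u-uB_1$ and time-reversal of the bridge, is routine. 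Alternatively, one may skip this estimate and simply invoke the classical Aronson-type two-sided Gaussian bounds for the transition density of $dX_t=\mu(X_t)\,dt+dW_t$ with bounded measurable $\mu$.
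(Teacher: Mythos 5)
Your argument is correct, but it takes a genuinely different route from the paper. The paper's proof of this lemma is essentially a one-line reduction: it invokes the classical Aronson-type two-sided Gaussian bounds for the transition density of $dX_t=\mu(X_t)\,dt+dW_t$ with bounded measurable drift (the paper cites a reference for \eqref{gausdensity}) and then integrates the density over $[x,y]$ — precisely the ``alternative'' you mention in your last sentence. Your proof instead builds the required density bounds from scratch via Girsanov, the factorization $p_t(z)=\tfrac{1}{\sqrt{2\pi t}}e^{-(z-x_0)^2/(2t)}h_t(z-x_0)$, and two-sided control of $h_t$ through the bridge decomposition (Jensen below, Cauchy--Schwarz above); the details check out, including the computation $\int_0^t\tfrac{\EE|w-\widetilde W_s|}{t-s}\,ds\le |w|+\sqrt{\pi t/2}$, the bound $\EE_Q[\mathcal E(2\!\int\!\mu\,d\beta)_t^2]^{1/2}\le e^{2\|\mu\|_\infty^2}$, and the Ornstein--Uhlenbeck/exponential-randomization trick showing that $\Xi=\int_0^1|\widehat Z_u|(1-u)^{-1}\,du$ has all exponential moments. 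What your route buys is self-containedness at the cost of length: you obtain a uniform (in fact Gaussian-with-variance-one) upper bound and a locally uniform lower bound on $p_t$ for $t\in[\tau,1]$, which is exactly what the lemma needs, without appealing to the heat-kernel literature; the paper's route is shorter but rests on a nontrivial external result. The only points you should make explicit in a written-up version are (i) that the bridge decomposition provides a \emph{regular version} of the conditional expectation $h_t(w)$, so that the pointwise bounds in $w$ are meaningful, and (ii) that the stochastic integral $\int_0^t\mu(x_0+\widetilde W_s)\,d\widetilde W_s$, defined under $Q$, agrees under the conditional bridge law with the integral computed from the bridge semimartingale decomposition (standard via absolute continuity of the bridge law on $[0,s]$ for $s<t$ and passage to the limit $s\uparrow t$, which is justified since the integrand is bounded and the drift term converges absolutely). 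Neither point is a gap, only a presentational obligation.
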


\begin{proof}
It is well-known that the assumption that $\mu$ is measurable and bounded implies that for every $t\in (0,1]$, the solution $X_t$  has a Lebesgue density $p_t\colon \R\to [0, \infty)$, which satisfies a two-sided Gaussian bound, i.e. there exist $c_1, c_2, c_3, c_4\in (0, \infty)$ such that for all $t\in(0,1]$ and all $z\in\R$,
\begin{equation}\label{gausdensity}
c_1\cdot \frac{1}{\sqrt{2\pi c_2t}}\cdot e^{-\frac{(z-x_0)^2}{2c_2t}}\leq p_t(z)\leq c_3\cdot \frac{1}{\sqrt{2\pi c_4t}}\cdot e^{-\frac{(z-x_0)^2}{2c_4t}},
\end{equation}
see e.g.~\cite{qz02}.

Let $x,y\in\R$ with $x\leq y$ and  $t\in[\tau, 1]$. Using the second inequality in \eqref{gausdensity} we obtain
\[
\PP(X_t\in [x,y])=\int_x^y p_t(z)\,dz\leq c_3\cdot \frac{1}{\sqrt{2\pi c_4t}}\cdot \int_x^y e^{-\frac{(z-x_0)^2}{2c_4t}}\, dz
\leq c_3\cdot \frac{1}{\sqrt{2\pi c_4\tau}}\cdot (y-x),
\]
which proves the upper bound \eqref{d23}.

 Next assume that $x,y\in[-M, M]$. Employing the first inequality in \eqref{gausdensity} we conclude
\[
\PP(X_t\in [x,y])=\int_x^y p_t(z)\,dz\geq c_1\cdot \frac{1}{\sqrt{2\pi c_2t}}\cdot \int_x^y e^{-\frac{(z-x_0)^2}{2c_2t}}\, dz \geq c_1\cdot \frac{1}{\sqrt{2\pi c_2}}\cdot  e^{-\frac{(M+|x_0|)^2}{2c_2\tau}}\cdot (y-x),
\]
which  proves the lower bound \eqref{d23a} and completes the proof of the lemma.
\end{proof}

Next, we provide 
an estimate for the expected occupation time  of a neighborhood of an arbitrary point $\xi\in\R$ by a strong solution of the SDE~\eqref{generalsde} with deterministic initial value.

\begin{lemma}\label{OT}
Assume that $\mu$ is measurable and bounded, let $(\Omega,\mathcal F,\PP)$ be a 
complete
probability space, 
 let $W\colon [0,1]\times \Omega\to \R$ be a standard Brownian motion, and for every $x\in\R$ let $X^x\colon  [0,1]\times \Omega\to \R$ be a strong solution of the SDE~\eqref{generalsde} on the time-interval $[0,1]$ with driving Brownian motion $W$ and initial value $x$.
Then there exists $c\in(0, \infty)$ such that for all $x,\xi\in\R$, $s\in [0,1]$
 and  all $\varepsilon\in(0, \infty)$, 
\[
\int_{0}^{s}\PP(|X_t^x-\xi|\leq \varepsilon)\, dt \leq c\,\varepsilon\,\sqrt{s}.
\]
\end{lemma}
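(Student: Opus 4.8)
The plan is to bound the expected occupation time by comparing the law of $X_t^x$ with that of a Brownian motion started at $x$, using the Girsanov theorem together with a Cauchy--Schwarz argument. First I would fix $x,\xi\in\R$ and $\varepsilon\in(0,\infty)$ and write, for each $t\in(0,1]$,
\[
\PP(|X_t^x-\xi|\le\varepsilon)=\EE\bigl[1_{[\xi-\varepsilon,\xi+\varepsilon]}(X_t^x)\bigr].
\]
Since $\mu$ is measurable and bounded, the exponential local martingale
\[
\mathcal E_t=\exp\Bigl(\int_0^t\mu(x+W_s)\,dW_s-\tfrac12\int_0^t\mu(x+W_s)^2\,ds\Bigr)
\]
is in fact a true martingale with $\EE[\mathcal E_1^2]<\infty$, bounded by a constant depending only on $\|\mu\|_\infty$, and by Girsanov's theorem the process $(x+W_t)_{t\in[0,1]}$ under the measure $d\widetilde\PP=\mathcal E_1\,d\PP$ is a (weak) solution of \eqref{generalsde} with initial value $x$; by uniqueness in law this has the same distribution as $X^x$. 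Hence
\[
\PP(|X_t^x-\xi|\le\varepsilon)=\EE\bigl[\mathcal E_t\,1_{[\xi-\varepsilon,\xi+\varepsilon]}(x+W_t)\bigr]\le\EE[\mathcal E_t^2]^{1/2}\,\PP(|x+W_t-\xi|\le\varepsilon)^{1/2}.
\]

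Next I would estimate the two factors. Since $\EE[\mathcal E_t^2]\le e^{\|\mu\|_\infty^2}\,\EE[\exp(2\int_0^t\mu(x+W_s)\,dW_s-2\int_0^t\mu(x+W_s)^2\,ds)]\le e^{\|\mu\|_\infty^2}$, the first factor is bounded by a constant $c_0$ depending only on $\|\mu\|_\infty$. For the second factor, $x+W_t-\xi$ is normally distributed with variance $t$, so its density is bounded by $1/\sqrt{2\pi t}$, whence $\PP(|x+W_t-\xi|\le\varepsilon)\le 2\varepsilon/\sqrt{2\pi t}$. Combining these gives
\[
\PP(|X_t^x-\xi|\le\varepsilon)\le c_0\,\Bigl(\frac{2\varepsilon}{\sqrt{2\pi t}}\Bigr)^{1/2}=c_1\,\frac{\sqrt\varepsilon}{t^{1/4}}
\]
for a constant $c_1$ depending only on $\|\mu\|_\infty$. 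Integrating over $t\in[0,s]$ yields
\[
\int_0^s\PP(|X_t^x-\xi|\le\varepsilon)\,dt\le c_1\,\sqrt\varepsilon\int_0^s t^{-1/4}\,dt=\tfrac{4}{3}\,c_1\,\sqrt\varepsilon\, s^{3/4}\le c\,\sqrt\varepsilon\,\sqrt s,
\]
where in the last step I use $s\le1$. Note the constant $c$ is uniform in $x$ and $\xi$, as required.

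Alternatively, and perhaps more cleanly given the tools already in the paper, I would simply invoke the two-sided Gaussian density bound \eqref{gausdensity} from the proof of Lemma~\ref{Xpg} (with a deterministic initial value $x$, the constants there depending only on $\|\mu\|_\infty$ and not on $x$): for $t\in(0,1]$ the density $p_t^x$ of $X_t^x$ satisfies $p_t^x(z)\le c_3/\sqrt{2\pi c_4 t}$, so $\PP(|X_t^x-\xi|\le\varepsilon)\le 2c_3\varepsilon/\sqrt{2\pi c_4 t}$, and integrating $t^{-1/2}$ over $[0,s]$ gives the bound $c\,\varepsilon\,\sqrt s$ directly. The only point requiring a little care is the uniformity of the density constants in the initial value $x$; this is built into the scale-invariant Gaussian bound since a shift of the initial condition just shifts the density, so the sup-bound on $p_t^x$ is $x$-independent. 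I do not expect any real obstacle here — the statement is a routine consequence of the Aronson-type heat kernel estimate already quoted — the main thing to get right is that the constant genuinely does not depend on $x$, $\xi$, $s$, or $\varepsilon$.
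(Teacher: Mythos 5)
Your first (Girsanov--Cauchy--Schwarz) argument does not prove the lemma: the Cauchy--Schwarz step converts the indicator's probability into a square root, so you end up with $c\,\sqrt{\varepsilon}\,s^{3/4}\le c\,\sqrt{\varepsilon}\,\sqrt{s}$, which is \emph{not} the claimed bound $c\,\varepsilon\,\sqrt{s}$ and is strictly weaker for small $\varepsilon$. This loss is not cosmetic. The lemma is used in Lemma~\ref{Yproc} with $\varepsilon=\|\mu\|_\infty\,|t-s|$, where the linear dependence on $\varepsilon$ is exactly what produces the exponent $3$ in~\eqref{ineq1}; with $\sqrt{\varepsilon}$ in its place the whole chain of estimates leading to Lemma~\ref{diagonal1} degrades. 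Moreover the defect cannot be repaired within that scheme: replacing Cauchy--Schwarz by H\"older with exponents $(q,p)$ still yields $\PP(|x+W_t-\xi|\le\varepsilon)^{1/p}$ with $p>1$, since $\mathcal E_t$ is unbounded, so one never recovers the first power of $\varepsilon$. To get a linear-in-$\varepsilon$ bound from Girsanov one would have to work with the density of $X_t^x$ itself rather than with an $L_2$-estimate of the Radon--Nikodym derivative.

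Your second, alternative argument is correct and does prove the lemma: the Aronson-type upper bound in~\eqref{gausdensity} gives $p_t^x(z)\le c_3/\sqrt{2\pi c_4 t}$ with constants depending only on $\|\mu\|_\infty$ (hence uniform in the initial value $x$, as you note), so $\PP(|X_t^x-\xi|\le\varepsilon)\le c\,\varepsilon\, t^{-1/2}$ and $\int_0^s t^{-1/2}\,dt=2\sqrt{s}$ finishes the proof. This differs from the paper's route, which avoids heat kernel estimates altogether: the paper applies Tanaka's formula to $|X^x-a|$ to bound the expected local time by $\EE[L_t^a(X^x)]\le c\sqrt{t}$ uniformly in $a$ and $x$ (using only the boundedness of $\mu$ and the $L_2$-isometry for the stochastic integral), and then converts the occupation time of $[\xi-\varepsilon,\xi+\varepsilon]$ into an integral of $\EE[L_s^a(X^x)]$ over $a$ via the occupation time formula, which produces the factor $\varepsilon$ from the length of the interval and the factor $\sqrt{s}$ from the local time bound. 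The paper's argument is more self-contained; yours imports the density bound that the paper in any case quotes for Lemma~\ref{Xpg}, so either is acceptable, but you should discard the first argument or present it only as proving a weaker statement.
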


The proof of Lemma \ref{OT} is similar to the proof of Lemma 4 in~\cite{MGY20}. For convenicence of the reader we present the proof of Lemma \ref{OT} here.

\begin{proof}
Let  $x\in\R$. Clearly,
$X^x$ is a continuous semi-martingale with 
 quadratic variation
\begin{equation}\label{qv}
\langle X^x\rangle_t
      =t,\quad t\in[0,1].
\end{equation}
For $a\in\R$ let $L^a(X^x) = (L^a_t(X^x))_{t\in[0,1]}$ denote the local time 
of $X^x$ at the point $a$.
Hence, 
for all $a\in\R$ and
 all $t\in[0,1]$, 
\begin{align*}
|X_t^x-a| & = |x-a| + \int_0^t \sgn(X_s^x-a)\, \mu (X_s^x)\, ds + \int_0^t \sgn(X_s^x-a)\, dW_s + L^a_t(X^x),
\end{align*}
where $\sgn(z) = 1_{(0,\infty)}(z) - 1_{(-\infty,0]}(z)$ for $z\in\R$,
see, e.g.~\cite[Chap. VI]{RevuzYor2005}.
Thus, 
for all $a\in\R$ and
 all $t\in[0,1]$,
\begin{align*}
L^a_t(X^x) & \le |X^x_t-x| + \int_0^t |\mu (X_s^x)|\, ds + \Bigl|\int_0^t \sgn(X_s^x-a)\, dW_s\Bigr|\\
& \le 2\,\int_0^t |\mu (X_s^x)|\, ds + |W_t| + \Bigl|\int_0^t \sgn(X_s^x-a)\, dW_s\Bigr|.
\end{align*}

Since $\mu$ is bounded
we may conclude that 
there exists $c\in (0,\infty)$ such that
  for all $x\in\R$,
all $a\in\R$ 
   and all $t\in[0,1]$,
\begin{equation}\label{local1}
\EE\bigl[L^a_t(X^x)\bigr]  \le  c\,\sqrt{t}.
\end{equation}
Using~\eqref{qv} and~\eqref{local1} we obtain by the occupation 
time
formula that
for all $x,\xi\in\R$, all $s\in [0,1]$ and all $\eps\in (0,\infty)$,
\[
  \int_{0}^{s}\PP(|X_t^x-\xi|\leq \varepsilon)\, dt=\EE\Bigl[\int_0^s 1_{[\xi-\eps,\xi+\eps]}(X^x_{t})\, dt\Bigr] = \int_{\R}1_{[\xi-\eps,\xi+\eps]}(a)\, \EE\bigl[L^a_s(X^x)\bigr]\, da 
 \le 2c\,  \eps\,\sqrt{s},
\]
which completes the proof of the lemma.
\end{proof}

The following lemma provides under the condition ($\mu1$) a common functional representation of arbitrary solutions of the SDE~\eqref{generalsde} as well as the transition probabilities and a comparison result for strong solutions of~\eqref{generalsde} with deterministic initial values.

\begin{lemma}\label{markov}
Assume that $\mu$ satisfies ($\mu1$). Then strong existence and pathwise uniqueness hold for the SDE~\eqref{generalsde}. In particular,  for every $T\in (0,\infty)$ there exists a Borel-measurable  mapping 
\[
F\colon \R\times C([0,T],\R)\to C([0,T],\R)
\]
such that for every 
complete
probability space $(\Omega,\mathcal F,\PP)$, every standard Brownian motion $W\colon [0,T]\times \Omega\to \R$ and every random variable $\eta\colon\Omega\to \R$ such that $W$ and $\eta$ are independent it holds that
\begin{itemize}
\item[(i)] if   $X\colon[0,T]\times \Omega\to \R$ is a  solution of the SDE~\eqref{generalsde} on the time-interval $[0,T]$ with driving Brownian motion $W$ and initial value $\eta$ then $X=F(\eta,W)$ $\PP$-almost surely,
\item[(ii)] $F(\eta,W)$ is a strong solution of the SDE~\eqref{generalsde} on the time-interval $[0,T]$ with driving Brownian motion $W$ and initial value $\eta$.
\end{itemize}

Moreover, let $(\Omega,\mathcal F,\PP)$ be a
complete
 probability space, 
let $T\in (0,\infty)$,
let $W\colon [0,T]\times \Omega\to \R$ be a standard Brownian motion, and for every $x\in\R$ let $X^x\colon  [0,T]\times \Omega\to \R$ be a strong solution of the SDE~\eqref{generalsde} on the time-interval $[0,T]$ with driving Brownian motion $W$ and initial value $x$. Then 
\begin{itemize}
\item[(iii)] for all $s\in[0,T]$ and $\PP ^{X_s} $-almost all $x\in\R$ we have
\[
\PP^{(X^x_t)_{t\in [s,T]}|X^x_s=y}=\PP^{(X_t^y)_{t\in [0,T-s]}},
\]
\item[(iv)] for all $x,y\in \R$ with $x\le y$ we have
\[
\PP(\forall\, t\in[0,T]\colon X^x_t \le X^y_t) = 1.
\]
\end{itemize}
\end{lemma}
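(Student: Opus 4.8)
The plan is to obtain (i)--(ii) from the known strong well-posedness of \eqref{generalsde} together with the Yamada--Watanabe theory, and to derive (iii) and (iv) as soft consequences. Since $\mu$ satisfies ($\mu1$) and $\sigma\equiv 1$ satisfies ($\sigma1$) (and, by Lemma~\ref{basics}, $\mu$ has linear growth), the SDE~\eqref{generalsde} has, for every deterministic initial value, a pathwise unique strong solution; this is the content of~\cite{LS16} already recalled in the introduction, and it rests on the Leobacher--Sz\"olgyenyi transformation, a strictly increasing bi-Lipschitz map $G$ turning \eqref{generalsde} into an SDE with globally Lipschitz coefficients. Pathwise uniqueness together with weak existence then yields, by the Yamada--Watanabe theorem (see e.g.~\cite{RevuzYor2005}), a universal Borel-measurable functional $F\colon\R\times C([0,T],\R)\to C([0,T],\R)$ such that on any probability space carrying a Brownian motion $W$ and an independent initial value, $F(\cdot,W)$ solves \eqref{generalsde} and every solution coincides with $F(\eta,W)$ up to a null set; this gives (i) and (ii). The only delicate point is that $F$ must work simultaneously for all, possibly random, $\eta\perp W$: one first builds for each fixed $x$ a measurable map $F_x$ with $X^x=F_x(W)$ a.s., arranges $(x,w)\mapsto F_x(w)$ to be jointly measurable, and then conditions on $\eta=x$ using independence.

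For (iii), fix $s\in[0,T]$ and set $W^{(s)}=(W_{s+u}-W_s)_{u\in[0,T-s]}$, a standard Brownian motion independent of $\mathcal F_s$, hence independent of $X^x_s$ (here completeness of $\mathcal F$ ensures $X^x$ is $\mathcal F_s$-adapted). The shifted process satisfies $X^x_{s+u}=X^x_s+\int_0^u\mu(X^x_{s+r})\,dr+W^{(s)}_u$, so it is a solution of \eqref{generalsde} on $[0,T-s]$ with driving Brownian motion $W^{(s)}$ and initial value $X^x_s$. By (i), $(X^x_{s+u})_u=F(X^x_s,W^{(s)})$ a.s., whence for every bounded measurable $\Phi$ on $C([0,T-s],\R)$ and $\PP^{X^x_s}$-almost every $y$ we get $\EE[\Phi((X^x_{s+u})_u)\mid X^x_s=y]=\EE[\Phi(F(y,W^{(s)}))]$, and by (ii) the latter equals $\EE[\Phi((X^y_u)_{u\in[0,T-s]})]$, since $F(y,W^{(s)})$ is a strong solution of \eqref{generalsde} started at $y$ driven by a standard Brownian motion, hence has the law of $X^y$. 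This is the asserted identity of conditional laws.

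For (iv), let $x\le y$; the case $x=y$ is immediate from pathwise uniqueness, so assume $x<y$ and put $\tau=\inf\{t\in[0,T]\colon X^x_t\ge X^y_t\}$, with $\tau=\infty$ if the set is empty. On $\{\tau\le T\}$, continuity of the paths and $X^x_0<X^y_0$ force $X^x_\tau=X^y_\tau$ and $\tau>0$. Restarting both processes at the stopping time $\tau$ from the common value $X^x_\tau$, and using the strong Markov property of $W$ together with (i) in the usual way, the trajectories $(X^x_{\tau+u})_u$ and $(X^y_{\tau+u})_u$ coincide on $[0,T-\tau]$; hence $X^x_t=X^y_t$ for $t\ge\tau$, so the inequality $X^x_t\le X^y_t$ never fails, giving $\PP(\forall t\in[0,T]\colon X^x_t\le X^y_t)=1$. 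Alternatively, and avoiding any restarting argument, one transforms by the strictly increasing map $G$ of~\cite{LS16} into an SDE with Lipschitz coefficients, applies the classical comparison theorem there, and transforms back using the monotonicity of $G$.

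The main obstacle is part (i): in contrast to the globally Lipschitz case, strong existence and pathwise uniqueness for the piecewise-Lipschitz drift are not elementary and rely on the removal-of-discontinuities transformation of~\cite{LS16}, and upgrading from ``a strong solution exists for each fixed initial value'' to ``there is one universal measurable functional $F$ valid for all independent random initial values'' requires the measurable-selection (Yamada--Watanabe) component. Once (i) and (ii) are available, (iii) is a routine shift computation and (iv) a routine non-crossing argument.
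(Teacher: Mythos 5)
Your proposal is correct and follows the paper's route almost step for step: strong existence and pathwise uniqueness via the Leobacher--Sz\"olgyenyi-type transformation $G$ to a globally Lipschitz SDE (the paper invokes Lemmas 7 and 8 of~\cite{MGY20} for exactly this), the universal measurable functional $F$ via the Yamada--Watanabe/Kallenberg machinery (the paper cites Theorem~1 of~\cite{Ka96}), and the identical shift argument $(X^x_{s+u})_u=F(X^x_s,(W_{s+u}-W_s)_u)$ for (iii). The one place you diverge is part (iv): your primary argument is a first-crossing argument --- define $\tau$ as the first time $X^x\ge X^y$, note $X^x_\tau=X^y_\tau$ by continuity, restart at $\tau$ and invoke pathwise uniqueness --- whereas the paper transforms by $G$, applies the classical comparison theorem for Lipschitz SDEs (\cite[Proposition 5.2.18]{ks91}) to the transformed equation, and pulls back through the monotone $G^{-1}$; you list this as your alternative. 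Your first-crossing argument is sound in substance (and, notably, like the paper's it never uses ($\mu$4), only pathwise uniqueness), but it carries nontrivial technical overhead that you wave at with ``in the usual way'': applying (i) at the stopping time $\tau$ requires extending the initial value off $\{\tau\le T\}$, handling the random residual horizon $T-\tau$, and verifying that $X^x_\tau$ is independent of the post-$\tau$ increments, none of which is needed in the paper's one-line reduction to the Lipschitz comparison theorem. Neither route is a gap; the paper's is simply the cleaner of the two once the transformation $G$ is already in hand for parts (i) and (ii).
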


\begin{proof}
As a straightforward generalization of Lemma 7 and Lemma 8 in~\cite{MGY20} one obtains that
there exist Lipschitz continuous functions  $\widetilde\mu, \widetilde\sigma\colon\R\to\R$
 and a strictly increasing, 
Lipschitz continuous bijection $G\colon\R\to\R$ 
with a Lipschitz continuous inverse $G^{-1}\colon\R\to\R$  such that for every $T\in (0,\infty)$, every 
complete
probability space $(\Omega,\mathcal F,\PP)$, every standard Brownian motion $W\colon [0,T]\times \Omega\to \R$ and every random variable $\eta\colon\Omega\to \R$ such that $W$ and $\eta$ are independent it holds that 
\begin{itemize}
\item[a)] if $X\colon [0,T]\times \Omega\to \R$ is a (strong) solution of the SDE
\begin{equation}\label{generalsdetrans}
dX_t = \widetilde\mu(X_t)\, dt + \widetilde\sigma(X_t)\, dW_t
\end{equation}
on the time-interval $[0,T]$ with driving Brownian motion $W$ and initial value $\eta$ then $G^{-1}\circ X$ is a (strong) solution of the SDE~\eqref{generalsde} on the time-interval $[0,T]$ with driving Brownian motion $W$ and initial value $G^{-1} (\eta)$,
\item[b)] if $X\colon [0,T]\times \Omega\to \R$ is a (strong) solution of the SDE~\eqref{generalsde}  on the time-interval $[0,T]$ with driving Brownian motion $W$ and initial value $\eta$ then $G\circ X$ is a (strong) solution of the SDE~\eqref{generalsdetrans} on the time-interval $[0,T]$ 
with driving Brownian motion $W$ and initial value $G(\eta)$.
\end{itemize}

By the Lipschitz continuity of $\widetilde \mu$ and $ \widetilde\sigma$ strong existence and pathwise uniqueness hold for the SDE~\eqref{generalsdetrans}. Using a) and b) it follows that strong existence and pathwise uniqueness hold for the SDE~\eqref{generalsde} as well. For every $T\in (0,\infty)$ the existence of $F$ with the property (i) is now a consequence of Theorem 1 in~\cite{Ka96}. Strong existence for the SDE~\eqref{generalsde} and property (i) jointly imply that $F$  has property (ii) as well.

We turn to the proof of (iii). Let $s\in [0,T]$ and choose a Borel measurable $F\colon\R\times C([0,T-s],\R)\to C([0,T-s],\R) $ according to the already proven part of the lemma. In particular, for all $x\in \R$ we have  $(X^x_t)_{t\in [0,T-s]} = F(x,(W_t)_{t\in [0,T-s]})$ almost surely. Let $x\in \R$. The process $(X^x_{s+t})_{t\in [0,T-s]}$  is a solution of the SDE~\eqref{generalsde} on the time-interval $[0,T-s]$ with driving Brownian motion $(W_{s+t}-W_s)_{t\in [0,T-s]}$ and initial value $X^x_s$. By property (i) of $F$ we thus have $(X^x_{s+t})_{t\in [0,T-s]} = F(X^x_s,(W_{s+t}-W_s)_{t\in [0,T-s]}))$ almost surely. It follows that for $\PP ^{X^x_s} $-almost all $y\in\R$ we have
\[ 
\PP^{(X^x_{s+t})_{t\in [0,T-s]}|X^x_s=y} = \PP^{F(y, (W_{s+t}-W_s)_{t\in [0,T-s]})} = \PP^{F(y,(W_t)_{t\in [0,T-s]})} = \PP^{(X^y_t)_{t\in [0,T-s]}}.
\]

Finally, we prove (iv). Let $x,y\in\R$ with $x\le y$. 
Then 
$G(x)\le G(y)$. Using b) and a comparison result for SDEs with Lipschitz continuous coefficients, e.g.~\cite[Proposition 5.2.18]{ks91}, we thus obtain that
$\PP( \forall\, t\in[0,T]\colon G(X^x_t) \le G(X^y_t)) = 1$. Since $G^{-1}$ is increasing we furthermore have $\{\forall\, t\in[0,T]\colon G(X^x_t) \le G(X^y_t)\}\subset \{\forall\, t\in[0,T]\colon X^x_t \le X^y_t\}$, which finishes the proof of (iv).

This completes the proof of the lemma.
\end{proof}

Finally, we  introduce  a simple approximation of strong solutions of the SDE~\eqref{generalsde} on a time-interval $[s,t]$ and provide an estimate of the mean squared total amount of time the solution and its approximation lie on different sides of a fixed horizontal line.

\begin{lemma}\label{Yproc}
Assume that $\mu$ satisfies ($\mu1$) and ($\mu5$). Let $(\Omega,\mathcal F,\PP)$ be a 
complete
probability space, let $W\colon [0,1]\times \Omega\to \R$ be a standard Brownian motion, and for every $x\in\R$ let $X^x\colon  [0,1]\times \Omega\to \R$ be a strong solution of the SDE~\eqref{generalsde} on the time-interval $[0,1]$ with driving Brownian motion $W$ and initial value $x$. 
Then there exists $c\in(0, \infty)$ such that for all $\xi\in\R$, all $s,t\in [0,1]$ with $s<t$ and all $x\in\R$,
\begin{equation}\label{ineq1}
\EE\Bigl[\Bigl(\int_{s}^{t}1_{\{(X^x_u-\xi)\, (X^x_s + W_u-W_s-\xi)\leq 0\}} \, du\Bigr)^2\Bigr]\leq c\,(t-s)^3.
\end{equation}
\end{lemma}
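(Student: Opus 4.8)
The plan is to replace, on the interval $[s,t]$, the solution $X^x$ by the \emph{frozen} process $\widehat X_u := X^x_s + W_u - W_s$, $u\in[s,t]$, and thereby to reduce the estimate~\eqref{ineq1} to a two-point occupation time bound for Brownian motion in a neighbourhood of a point. Since $\mu$ is bounded by ($\mu5$), for every $u\in[s,t]$ one has $|X^x_u-\widehat X_u| = \bigl|\int_s^u\mu(X^x_r)\,dr\bigr|\le\|\mu\|_\infty(t-s)=:\eps$. On the event $\{(X^x_u-\xi)(\widehat X_u-\xi)\le 0\}$ the value $\xi$ lies between $X^x_u$ and $\widehat X_u$, so $|\widehat X_u-\xi|\le|X^x_u-\widehat X_u|\le\eps$; hence the integrand in~\eqref{ineq1} is dominated pointwise by $1_{\{|X^x_s+W_u-W_s-\xi|\le\eps\}}$, and it suffices to bound the second moment of $\int_s^t 1_{\{|X^x_s+W_u-W_s-\xi|\le\eps\}}\,du$.

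Expanding the square and using Tonelli I would write this second moment as $2\int_s^t\int_s^u\PP(A_u\cap A_w)\,dw\,du$ with $A_u=\{|X^x_s+W_u-W_s-\xi|\le\eps\}$. Setting $\eta=X^x_s-\xi$ and $\beta_v=W_{s+v}-W_s$, and noting that (by pathwise uniqueness, cf.\ Lemma~\ref{markov}) $X^x_s$ is $\sigma(W_r:0\le r\le s)$-measurable and hence independent of $(\beta_v)_{v\ge0}$, I would condition on $\eta=a$. On $\{|a+\beta_v|\le\eps\}\cap\{|a+\beta_w|\le\eps\}$ one has $|\beta_v-\beta_w|\le 2\eps$; since $\beta_w$ and $\beta_v-\beta_w$ are independent for $w<v$ and their densities are bounded by $(2\pi w)^{-1/2}$ and $(2\pi(v-w))^{-1/2}$, this gives, uniformly in $a\in\R$ and $0\le w<v$,
\[
\PP(|a+\beta_v|\le\eps,\ |a+\beta_w|\le\eps)\le\frac{2\eps}{\sqrt{2\pi w}}\cdot\frac{4\eps}{\sqrt{2\pi (v-w)}}=\frac{4\eps^2}{\pi\sqrt{w(v-w)}}.
\]

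Integrating this bound over $a$ (it is independent of $a$) and over $0\le w<v\le t-s$, and using the elementary identity $\int_0^v (w(v-w))^{-1/2}\,dw=\pi$, I obtain
\[
\EE\Bigl[\Bigl(\int_s^t 1_{\{|X^x_s+W_u-W_s-\xi|\le\eps\}}\,du\Bigr)^2\Bigr]\le 2\int_0^{t-s}\frac{4\eps^2}{\pi}\cdot\pi\,dv=8\eps^2(t-s)=8\|\mu\|_\infty^2(t-s)^3,
\]
which establishes~\eqref{ineq1} with $c=8\|\mu\|_\infty^2$. The only substantive idea is the reduction to the frozen process in the first paragraph; after that the argument is completely explicit. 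The two minor points to watch are the independence used in the conditioning (which rests on $X^x$ being adapted to the Brownian filtration, available here via Lemma~\ref{markov}) and the integrability of the singular kernel $1/\sqrt{w(v-w)}$, neither of which presents a genuine obstacle.
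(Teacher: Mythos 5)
Your proof is correct, and after the common first step it follows a genuinely different route from the paper. Both arguments begin with the same reduction: by ($\mu5$) the frozen process $\widehat X_u=X^x_s+W_u-W_s$ stays within $\eps=\|\mu\|_\infty(t-s)$ of $X^x_u$, and on the event $\{(X^x_u-\xi)(\widehat X_u-\xi)\le 0\}$ the point $\xi$ is sandwiched between the two. At that point the paper chooses to dominate the indicator by $1_{\{|X^x_u-\xi|\le\eps\}}$, i.e.\ it keeps the \emph{solution} in a small band around $\xi$, and must then invoke the Markov-type identity of Lemma~\ref{markov}(iii) together with the local-time occupation bound of Lemma~\ref{OT}, applied twice, to turn the double integral of $\PP(|X^x_u-\xi|\le\eps,\,|X^x_v-\xi|\le\eps)$ into $c\,(t-s)^{3/2}\cdot(t-s)^{3/2}$. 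You instead dominate by $1_{\{|\widehat X_u-\xi|\le\eps\}}$, keeping the \emph{Brownian} part in the band, which reduces everything to explicit Gaussian density bounds for the independent increments $\beta_w$ and $\beta_v-\beta_w$ and the elementary integral $\int_0^v(w(v-w))^{-1/2}\,dw=\pi$; the only probabilistic input is the independence of $X^x_s$ from $(W_{s+v}-W_s)_{v\ge0}$, which holds since a strong solution is adapted to the augmented Brownian filtration (the paper uses the same fact in the proof of Lemma~\ref{mixed}). Your version is more elementary, bypasses the local-time machinery of Lemma~\ref{OT} entirely, and yields the explicit constant $8\|\mu\|_\infty^2$; the paper's version has the advantage of reusing general-purpose lemmas about the solution process. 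The singularity $w^{-1/2}$ at $w=0$ that you flag is indeed harmless, being integrable, and the bound you use there remains a valid (if vacuous) upper bound for the probability.
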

\begin{proof}
Fix $\xi\in \R$, $s,t\in [0,1]$ with $s<t$ and $x\in \R$, and put
\[
Y_u = X^x_s + W_u-W_s
\]
for $u\in[s,t]$. By 
($\mu5$) we have for all $u\in [s,t]$,
\[
|X^x_u-Y_u|=\Bigl|\int_s^u \mu(X_v)\, dv\Bigr|
\leq \|\mu\|_\infty\,(u-s).
\]
Hence,
\begin{align*}
\Bigl(\int_{s}^{t}1_{\{(X^x_u-\xi)\, (Y_u-\xi)\leq 0\}} \, du\Bigr)^2
&=\int_{s}^{t}\int_{s}^{t} 1_{\{(X^x_u-\xi)\, (Y_{u}-\xi)\leq 0\}}\, 1_{\{(X^x_v-\xi)\, (Y_{v}-\xi)\leq 0\}} \, du\, dv\\
&\leq \int_{s}^{t}\int_{s}^{t}  1_{\{|X^x_u-\xi|\leq |X^x_u-Y_{u}|\}}\, 1_{\{|X^x_v-\xi|\leq |X^x_v-Y_{v}|\}} \, du\, dv\\
&\leq \int_{s}^{t}\int_{s}^{t} 1_{\{|X^x_u-\xi|\leq \|\mu\|_\infty\, |t-s|\}}\, 1_{\{|X^x_v-\xi|\leq \|\mu\|_\infty\, |t-s|\}} \, du\, dv\\
&= 2 \int_{s}^{t}\int_{v}^{t}  1_{\{|X^x_u-\xi|\leq \|\mu\|_\infty\, |t-s|\}}\, 1_{\{|X^x_v-\xi|\leq \|\mu\|_\infty\, |t-s|\}} \, du\, dv,
\end{align*}
and therefore,
\begin{equation}\label{z12}
\begin{aligned}
& \EE\Bigl[\Bigl(\int_{s}^{t}1_{\{(X^x_u-\xi)\, (Y_{u}-\xi)\leq 0\}} \, du\Bigr)^2\Bigr]\\
& \qquad\qquad \leq 2 \int_{s}^{t}\EE\Bigl[ 1_{\{|X^x_v-\xi|\leq \|\mu\|_\infty\,|t-s|\}}\,\EE\Bigl[\int_{v}^{1}  1_{\{|X^x_u-\xi|\leq \|\mu\|_\infty\, |t-s|\}}  \, du\Bigr|X^x_v\Bigr]\Bigr]\, dv.
\end{aligned}
\end{equation}
Using Lemma \ref{markov}(iii) and then Lemma  \ref{OT} we obtain 
that there exists $c_1\in(0, \infty)$, which only depends on $\mu$, such that for all $v\in[s, t]$ and $\PP^{X^x_v}$-almost all $y\in\R$,
\begin{equation}\label{z13}
\begin{aligned}
& \EE\Bigl[\int_{v}^{t}  1_{\{|X^x_u-\xi|\leq \|\mu\|_\infty\, |t-s|\}}  \, du\Bigr|X^x_v=y\Bigr]\\
&\qquad\qquad =\EE\Bigl[\int_{v}^{t}  1_{\{|X^{y}_{u-v}-\xi|\leq \|\mu\|_\infty\, |t-s|\}}  \, du\Bigr]\\
&\qquad\qquad
=\int_{0}^{t-v}  \PP(|X^{y}_{u}-\xi|\leq \|\mu\|_\infty\, |t-s|) \, du \leq c_1\, |t-s|^{3/2}.      
\end{aligned}
\end{equation}
   Combining~\eqref{z12} and~\eqref{z13} we obtain
   \begin{equation}\label{eqz1}
\begin{aligned}
&\EE\Bigl[\Bigl(\int_{s}^{t}1_{\{(X^x_u-\xi)\, (Y_{u}-\xi)\leq 0\}} \, du\Bigr)^2\Bigr]\\
& \qquad \qquad \leq 2c_1 \, |t-s|^{3/2}\, \int_{s}^{t}\PP(|X^x_v-\xi|\leq \|\mu\|_\infty\, |t-s|) \,dv\\
& \qquad \qquad =2c_1 \, |t-s|^{3/2}\,\EE\Bigl[\EE\Bigl[\int_{s}^{t}1_{\{|X^x_v-\xi|\leq \|\mu\|_\infty\, |t-s|\}}\, dv\Bigl| X^x_s\Bigr]\Bigr].
\end{aligned}
\end{equation}
Using again Lemma \ref{markov}(iii) and then Lemma  \ref{OT} we conclude similar to~\eqref{z13} that there exists $c_2\in(0, \infty)$, which only depends on $\mu$, such that for  $\PP^{X^x_s}$-almost all $y\in\R$,
\begin{equation}\label{eqz2}
 \EE\Bigl[\int_{s}^{t}  1_{\{|X^x_v-\xi|\leq \|\mu\|_\infty\, |t-s|\}}  \, du\Bigr|X^x_s=y\Bigr]
=\int_{0}^{t-s}  \PP(|X^{y}_{v}-\xi|\leq \|\mu\|_\infty\, |t-s|) \, dv \leq c_2\, |t-s|^{3/2}.      
\end{equation}
Combining~\eqref{eqz1} and~\eqref{eqz2} completes the proof of  the lemma.
\end{proof}

\subsection{Proof of Theorem \ref{Thm1}}\label{sub24}

In the following, let $(\Omega,\F,\PP)$ be a complete probability space and let $W\colon [0,1]\times \Omega\to \R$ be a standard Brownian motion. We will prove the following proposition, which clearly implies Theorem~\ref{Thm1}.

\begin{prop}\label{unprop}
 Assume that $\mu$ satisfies ($\mu1$) to ($\mu5$), let $x_0\in\R$ and let $X\colon [0,1]\times \Omega\to \R$ be a strong solution of the SDE~\eqref{generalsde} on the time-interval $[0,1]$ with initial value $x_0$ and driving Brownian motion $W$. Then  
 there exist $c_1,c_2\in (0,\infty)$ such that for all $n\in 2\N$, all $t_1, \ldots, t_n\in [0,1]$ with
 \begin{equation}\label{disci}
0<t_1<\ldots<t_n=1
\end{equation}
and
 \begin{equation}\label{b5i}
 2/n,4/n,\dots,1\in \{t_1,\dots,t_n\},
 \end{equation}
 and all measurable $g\colon \R^n\to \R$ we have
\begin{equation}\label{end555} 
\EE[|X_1-g(W_{t_1},\dots,W_{t_n}|]\ge \frac{c_1}{n^{3/4}}\,(\max(0,(1-c_2/n^{1/16})))^{3/2}.
\end{equation}
\end{prop}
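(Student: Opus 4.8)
The plan is to implement the coupling strategy outlined in the introduction, now for the $L_1$-error and for arbitrary (not necessarily equidistant) discretization points satisfying \eqref{disci} and \eqref{b5i}. First I would set up the coupled Brownian motion: let $\overline W$ be the piecewise linear interpolation of $W$ at $t_0=0,t_1,\dots,t_n$, put $B=W-\overline W$, and choose an independent copy $\widetilde B$ of the piecewise Brownian bridge process so that $\widetilde W=\overline W+\widetilde B$ is again a Brownian motion with $\PP^{(\overline W,B)}=\PP^{(\overline W,\widetilde B)}$. Using the functional representation $F$ from Lemma~\ref{markov}, set $\widetilde X=F(x_0,\widetilde W)$. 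Since $g(W_{t_1},\dots,W_{t_n})$ is a measurable function of $\overline W$ and $X=\Phi(\overline W,B)$, $\widetilde X=\Phi(\overline W,\widetilde B)$ for a suitable measurable $\Phi$, Lemma~\ref{symm} (as in Lemma~\ref{neu1}) gives
\[
\EE[|X_1-g(W_{t_1},\dots,W_{t_n})|]\ge\tfrac12\,\EE[|X_1-\widetilde X_1|],
\]
so it suffices to bound $\EE[|X_1-\widetilde X_1|]$ from below. To pass from $L_1$ to $L_2$ I would interpolate via H\"older as in \eqref{star1}--\eqref{star2}: $\EE[|X_1-\widetilde X_1|]\ge \EE[(X_1-\widetilde X_1)^2]^{3/2}\,\EE[|X_1-\widetilde X_1|^4]^{-1/2}$, and control the fourth moment by a crude bound of order $n^{-3}$ (using the linear growth of $\mu$ from Lemma~\ref{basics} and finiteness of moments of $X_1,\widetilde X_1$), so that everything reduces to a lower bound of order $n^{-3/2}$ for $\EE[(X_1-\widetilde X_1)^2]$.

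For the second moment I would use the telescoping identity \eqref{i1}: because the coupling keeps $W$ and $\widetilde W$ equal at the $t_i$,
\[
X_{t_i}-\widetilde X_{t_i}=X_{t_{i-1}}-\widetilde X_{t_{i-1}}+\int_{t_{i-1}}^{t_i}(\mu(X_s)-\mu(\widetilde X_s))\,ds,
\]
so $\EE[(X_1-\widetilde X_1)^2]=2\sum_i m_i+\sum_i d_i$ with $m_i,d_i$ as in \eqref{i1}. The increasing-ness of $\mu$ together with the comparison result Lemma~\ref{markov}(iv) (applied conditionally on $\overline W$ and on the values $X_{t_{i-1}},\widetilde X_{t_{i-1}}$, using (iii) to restart the SDE) forces $X_{t_{i-1}}-\widetilde X_{t_{i-1}}$ and $\int_{t_{i-1}}^{t_i}(\mu(X_s)-\mu(\widetilde X_s))\,ds$ to have the same sign, hence $m_i\ge0$; this is where ($\mu$4) is essential. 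It then remains to bound $\sum_i d_i$ from below. Restricting to indices $i$ with $t_{i-1}\ge 1/2$, I would replace $X$ and $\widetilde X$ on $[t_{i-1},t_i]$ by the frozen-drift processes $X_{t_{i-1}}+W_s-W_{t_{i-1}}$ and $X_{t_{i-1}}+\widetilde W_s-\widetilde W_{t_{i-1}}$: the error of this replacement is controlled using ($\mu$5) (boundedness of $\mu$), the estimate \eqref{basics1} for the jump-indicator part of $\mu$, the occupation-time bound Lemma~\ref{OT}, and the ``different sides of a line'' estimate Lemma~\ref{Yproc}, together with $L_p$-bounds on $X_{t_{i-1}}-\widetilde X_{t_{i-1}}$; this produces a bound of the form $d_i\ge \tfrac14\EE[R_i]-c\,n^{-5/2-1/16}$ with $R_i$ as in \eqref{i3xx}. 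Finally, conditioning on $(X_{t_{i-1}},W_{t_i}-W_{t_{i-1}})$ turns $\EE[R_i]$ into a double integral of covariances of $\mu$ evaluated along two coupled Brownian bridges sharing the same endpoints, and Lemma~\ref{BrBr} (with $h=\mu$, $U=X_{t_{i-1}}$, $V=(W_{t_i}-W_{t_{i-1}})/(t_i-t_{i-1})$, and $\ell$ chosen by ($\mu$3)) gives
\[
\EE[R_i]\ge c\,(t_i-t_{i-1})^2\,\PP\!\left(X_{t_{i-1}}\in[\xi_\ell,\xi_\ell+\sqrt{t_i-t_{i-1}}]\right)\PP\!\left(W_{t_i}-W_{t_{i-1}}\in[0,\sqrt{t_i-t_{i-1}}]\right).
\]
The first probability is of order $\sqrt{t_i-t_{i-1}}$ by the lower Gaussian density bound Lemma~\ref{Xpg} (valid since $t_{i-1}\ge1/2$ keeps $X_{t_{i-1}}$ in a fixed-density regime and $\mu$ is bounded, so $\xi_\ell$ lies in a compact set of positive density), the second is bounded below by a constant, so $\EE[R_i]\ge c\,(t_i-t_{i-1})^{5/2}$.

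Summing over $i$ with $t_{i-1}\ge1/2$, the main term contributes $c\sum(t_i-t_{i-1})^{5/2}\ge c\,n^{-3/2}$ by H\"older (as in \eqref{oo33}), while the aggregated error from the frozen-drift replacement is at most $n\cdot c\,n^{-5/2-1/16}=c\,n^{-3/2-1/16}$; hence $\EE[(X_1-\widetilde X_1)^2]\ge c_1 n^{-3/2}\big(1-c_2 n^{-1/16}\big)$, and taking $3/2$-powers (with a $\max(0,\cdot)$ to cover small $n$) and feeding this into the H\"older interpolation yields \eqref{end555}. The main obstacle is the frozen-drift replacement step: one must carefully quantify, uniformly in the index $i$ and in the possibly very non-uniform mesh, how much $\int_{t_{i-1}}^{t_i}(\mu(X_s)-\mu(\widetilde X_s))\,ds$ differs from its frozen-coefficient analogue, and in particular control the contribution of the discontinuities of $\mu$ — this is precisely what forces the somewhat delicate combination of Lemma~\ref{OT}, Lemma~\ref{Yproc} and the pointwise bound \eqref{basics1}, and it is the reason the exponent $1/16$ (rather than something cleaner) appears in the error term.
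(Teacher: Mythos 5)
Your architecture coincides with the paper's: reduce to $\EE[|X_1-\widetilde X_1|]$ for the coupled solution $\widetilde X=F(x_0,\widetilde W)$ via Lemma~\ref{symm}, expand $\EE[(X_1-\widetilde X_1)^2]$ telescopically over the grid, discard the cross terms $m_i\ge 0$ using the comparison result and ($\mu$4), freeze the drift on each subinterval at the cost of an $n^{-5/2-1/16}$ remainder, apply Lemma~\ref{BrBr} together with the lower density bound of Lemma~\ref{Xpg} to get $\EE[R_i]\ge c\,(t_i-t_{i-1})^{5/2}$ for $t_i>1/2$, sum with H\"older, and interpolate back to $L_1$. All of this is exactly what the paper does.

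The genuine gap is your treatment of the $L_p$-distances $X_{t_i}-\widetilde X_{t_i}$, which enter at two places: (a) in the frozen-drift replacement (the term $E_i$ in Lemma~\ref{diagonal1}), where one needs $\EE[|X_{t_{i-1}}-\widetilde X_{t_{i-1}}|]\le c\,n^{-3/4}$ and $\EE[|X_{t_{i-1}}-\widetilde X_{t_{i-1}}|^3]\le c\,n^{-9/4}$ --- this is precisely where the exponent $1/16$ is generated via the choice $\gamma=n^{-9/16}$; and (b) in the $L_1$--$L_2$ interpolation, which requires $\EE[|X_1-\widetilde X_1|^4]\le c\,n^{-3}$. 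You assert that (b) is ``a crude bound of order $n^{-3}$'' obtained from the linear growth of $\mu$ and finiteness of moments. That is not correct: linear growth and finite moments give only $\EE[|X_1-\widetilde X_1|^4]=O(1)$, and feeding $O(1)$ into your interpolation $\EE[|Z|]\ge\EE[Z^2]^{3/2}\,\EE[|Z|^4]^{-1/2}$ yields only a lower bound of order $n^{-9/4}$, far short of $n^{-3/4}$. The estimate $\EE[|X_1-\widetilde X_1|^4]^{1/4}\le c\,n^{-3/4}$ is itself a rate-$3/4$ strong approximation statement for the coupled pair; the paper obtains it (Lemma~\ref{lab2}) by comparing both $X$ and $\widetilde X$ at the common grid points $2j/n$ with the transformed time-continuous quasi-Milstein scheme of~\cite{MGY19b}, which is a measurable function of $W_{2j/n}=\widetilde W_{2j/n}$ and hence identical for both solutions, and then invoking the rate-$3/4$ upper bound of Theorem~4 there. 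This is the only point of the whole proof where hypothesis ($\mu$2) is used, and your proposal never invokes ($\mu$2) anywhere --- a telltale sign that this ingredient is missing. Without it, neither the $n^{-5/2-1/16}$ remainder in the frozen-drift step nor the final exponent $3/4$ can be reached.
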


For the proof of Proposition~\ref{unprop} we fix $n\in 2\N$ as well as $t_1, \ldots, t_n\in [0,1]$ with~\eqref{disci} and~\eqref{b5i}. Moreover, we put $t_0=0$.

Throughout this section we use $c,c_1,c_2\in(0,\infty)$ to denote positive constants that may change their values in every appearance but neither depend on $n$ nor on the time-points  $t_1, \ldots, t_{n}$.

Recall from Section~\ref{sub22} the definition and the properties of the processes $\overline W, B, \widetilde B,\widetilde W\colon [0,1]\times \Omega\to \R$ associated with the discretization~\eqref{disci}. 
In particular, $\widetilde W$ is a Brownian motion and for all $i\in\{0, \ldots, n\}$ we have
\begin{equation}\label{Brm}
W_{t_i}=\widetilde W_{t_i}.
\end{equation}

\begin{lemma}\label{lemmanew02} Assume that $\mu$ satisfies ($\mu1$), let $x_0\in\R$ and let $X,\widetilde X\colon [0,1]\times \Omega\to \R$ be strong solutions of the SDE~\eqref{generalsde} on the time-interval $[0,1]$ with initial value $x_0$ and driving Brownian motion $W$ and $\widetilde W$, respectively. Then for all measurable $g\colon \R^n\to \R$ and all $p\in[1,\infty)$,
\begin{equation}\label{gL1}
\begin{aligned}
\bigl(\EE[|X_1-g(W_{t_1}, \ldots, W_{t_n})|^p]\bigr)^{1/p}  \geq \frac{1}{2}\,   \bigl(\EE[|X_1-\widetilde X_1|^p]\bigr)^{1/p}.
\end{aligned}
\end{equation}
\end{lemma}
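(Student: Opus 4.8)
The plan is to apply Lemma~\ref{symm} with the right choice of random variables and maps. The key observation is that, just as in the proof of Lemma~\ref{neu1}, the pair $(X_1,\widetilde X_1)$ can be written as a function of $(\overline W, B)$ and $(\overline W, \widetilde B)$ respectively, where these two pairs have the same law (equation~\eqref{eqdis}), while $g(W_{t_1},\dots,W_{t_n})$ depends only on $\overline W$.

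First I would observe that, by Lemma~\ref{markov}, strong existence and pathwise uniqueness hold for the SDE~\eqref{generalsde}, so there is a Borel-measurable map $F\colon \R\times C([0,1],\R)\to C([0,1],\R)$ with $X=F(x_0,W)$ and $\widetilde X=F(x_0,\widetilde W)$ almost surely; in particular $X_1=\pi_1(F(x_0,W))$ and $\widetilde X_1=\pi_1(F(x_0,\widetilde W))$ where $\pi_1\colon C([0,1],\R)\to\R$ is evaluation at time $1$. Since $W=\overline W+B$ and $\widetilde W=\overline W+\widetilde B$, I can define the Borel-measurable map
\[
\Phi\colon C([0,1],\R)\times C([0,1],\R)\to\R,\qquad (f,h)\mapsto \pi_1\bigl(F(x_0,f+h)\bigr),
\]
so that $X_1=\Phi(\overline W,B)$ and $\widetilde X_1=\Phi(\overline W,\widetilde B)$ almost surely. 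Next, as in~\eqref{z3}, there is a measurable $\varphi\colon C([0,1],\R)\to\R$ with $g(W_{t_1},\dots,W_{t_n})=\varphi(\overline W)$, since $W_{t_i}$ is a measurable functional of the piecewise linear interpolant $\overline W$.

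Then I would invoke Lemma~\ref{symm} with $\Omega_1=\Omega_2=C([0,1],\R)$, $V_1=\overline W$, $V_2=B$, $V_2'=\widetilde B$, and $\Phi,\varphi$ as above. The hypothesis $\PP^{(V_1,V_2)}=\PP^{(V_1,V_2')}$ is exactly~\eqref{eqdis}, which holds because $\overline W$ and $B$ are independent, $\overline W$ and $\widetilde B$ are independent, and $\PP^B=\PP^{\widetilde B}$ by construction of $\widetilde B$. Lemma~\ref{symm} then yields
\[
\EE\bigl[|X_1-g(W_{t_1},\dots,W_{t_n})|^p\bigr]^{1/p}=\EE\bigl[|\Phi(\overline W,B)-\varphi(\overline W)|^p\bigr]^{1/p}\ge\tfrac12\,\EE\bigl[|\Phi(\overline W,B)-\Phi(\overline W,\widetilde B)|^p\bigr]^{1/p}=\tfrac12\,\EE\bigl[|X_1-\widetilde X_1|^p\bigr]^{1/p},
\]
which is~\eqref{gL1}.

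The only mild subtlety — the main obstacle, such as it is — is the measurability bookkeeping: one must be careful that $\widetilde X$ is genuinely given by $F(x_0,\widetilde W)$, which requires $\widetilde W$ to be a Brownian motion independent of nothing problematic and $x_0$ deterministic (so the independence hypothesis of Lemma~\ref{markov}(i) is vacuously satisfied), and that $f\mapsto f+h$ and $\pi_1$ are continuous so the composition defining $\Phi$ is Borel-measurable. All of this is routine, and the structure of the argument parallels the proof of Lemma~\ref{neu1} almost verbatim, with the occupation-time functional replaced by evaluation at time $1$ of the solution map $F$.
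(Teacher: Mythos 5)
Your proposal is correct and follows essentially the same route as the paper's proof: represent $X_1$ and $\widetilde X_1$ via the solution map $F$ from Lemma~\ref{markov}(i) as $\Phi(\overline W,B)$ and $\Phi(\overline W,\widetilde B)$, represent $g(W_{t_1},\dots,W_{t_n})$ as $\varphi(\overline W)$, and apply Lemma~\ref{symm} using $\PP^{(\overline W,B)}=\PP^{(\overline W,\widetilde B)}$ from~\eqref{eqdis}. The explicit formula $\Phi(f,h)=\pi_1(F(x_0,f+h))$ is just a spelled-out version of what the paper leaves implicit.
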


\begin{proof}
By Lemma~\ref{markov}(i) there exists a measurable function $F\colon \R\times C([0,1],\R)\to C([0,1],\R)$ such that $\PP$-almost surely,
\begin{equation}\label{sol}
X = F(x_0,W)\, \text{ and }\,\widetilde X =F(x_0,\widetilde W).
\end{equation}
Hence there exist measurable functions $\Phi\colon C([0,1],\R)\times C([0,1],\R)\to\R$ and $\varphi\colon C([0,1],\R)\to\R$ such that $\PP$-almost surely
\begin{equation}\label{repr}
X_1=\Phi(\overline W, B),\,\, \widetilde X_1 = \Phi(\overline W, \widetilde B),\,\, g(W_{t_1}, \ldots, W_{t_{n}})=\varphi(\overline W).
\end{equation}

Since $\PP^{(\overline W, B)}=\PP^{(\overline W, \widetilde B)}$, see~\eqref{eqdis}, we may apply
Lemma \ref{symm} with $\Omega_1=\Omega_2=C([0,1],\R)$, $V_1=\overline W$, $V_2=B$, $V_2'=\widetilde B$ and $\Phi, \varphi$ as in~\eqref{repr} to obtain~\eqref{gL1}.
\end{proof}

In the analysis of the right hand side of
\eqref{gL1} we will make use of the following upper bound on the $L_p$-distance between the two processes $X$ and $\widetilde X$ at the time points $t_0, \ldots, t_n$.
\begin{lemma}\label{lab2}
Assume that $\mu$ satisfies ($\mu1$), ($\mu2$) and ($\mu5$), let $x_0\in\R$ and let $X,\widetilde X\colon [0,1]\times \Omega\to \R$ be strong solutions of the SDE~\eqref{generalsde} on the time-interval $[0,1]$ with initial value $x_0$ and driving Brownian motion $W$ and $\widetilde W$, respectively. Then
for every $p\in[1, \infty)$
there exists $c\in(0, \infty)$ such that
\[
\max_{i\in\{0, \ldots, n\}}\EE\bigl[|X_{t_i}-\widetilde X_{t_i}|^p\bigr]^{1/p} \leq \frac{c}{n^{3/4}}.
\]
\end{lemma}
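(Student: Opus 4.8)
The plan is to bound the increments $X_{t_i}-\widetilde X_{t_i}$ recursively in time, exploiting the fact that $W$ and $\widetilde W$ agree at the grid points. Since by \eqref{Brm} we have $W_{t_i}=\widetilde W_{t_i}$, the coupling gives
\[
X_{t_i}-\widetilde X_{t_i} = X_{t_{i-1}}-\widetilde X_{t_{i-1}} + \int_{t_{i-1}}^{t_i}\bigl(\mu(X_s)-\mu(\widetilde X_s)\bigr)\,ds,
\]
so the noise terms telescope away entirely and only the drift increments accumulate. Writing $\Delta_i = X_{t_i}-\widetilde X_{t_i}$, one gets $\Delta_i = \sum_{j=1}^i \int_{t_{j-1}}^{t_j}(\mu(X_s)-\mu(\widetilde X_s))\,ds$ with $\Delta_0=0$. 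The goal is to show $\EE[|\Delta_i|^p]^{1/p}\le c/n^{3/4}$ uniformly in $i$.

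First I would split each drift increment $\int_{t_{j-1}}^{t_j}(\mu(X_s)-\mu(\widetilde X_s))\,ds$ using the pointwise bound \eqref{basics1} from Lemma~\ref{basics}: $|\mu(x)-\mu(y)|\le c(|x-y| + \sum_\ell 1_{D_\ell}(x,y))$. The Lipschitz-type part contributes $c\int_{t_{j-1}}^{t_j}|X_s-\widetilde X_s|\,ds$; using $(\mu5)$ (so $\|\mu\|_\infty<\infty$) together with the representation of $X_s-\widetilde X_s$ on $[t_{j-1},t_j]$ in terms of $\Delta_{j-1}$ plus a drift term of size $\le 2\|\mu\|_\infty/n$, this is $\le c\,n^{-1}(|\Delta_{j-1}| + n^{-1})$. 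Summing over $j\le i$ and invoking a discrete Gronwall argument controls the cumulative Lipschitz contribution by $c\sum_j n^{-1}\EE[|\Delta_{j-1}|^p]^{1/p}$ plus a term of order $n^{-1}$, which in the end is absorbed. The genuinely new work is the indicator part: I must estimate
\[
\EE\Bigl[\Bigl|\sum_{j=1}^i \int_{t_{j-1}}^{t_j}\textstyle\sum_{\ell=1}^k 1_{D_\ell}(X_s,\widetilde X_s)\,ds\Bigr|^p\Bigr]^{1/p}.
\]
Here I would use $(\mu2)$ and $(\mu5)$ to see that away from the $\xi_\ell$ the two solutions are driven by a Lipschitz flow, so $1_{D_\ell}(X_s,\widetilde X_s)$ can only be nonzero when both $X_s$ and $\widetilde X_s$ are within distance $\sim|\Delta_{j-1}| + n^{-1}$ of $\xi_\ell$ — crucially, one compares $X_s$ (resp.\ $\widetilde X_s$) to the frozen process $X_{t_{j-1}}+W_s-W_{t_{j-1}}$, whose deviation from $X_s$ is $O(n^{-1})$ by $(\mu5)$, reducing matters to an occupation-time estimate for a Brownian-type process near the point $\xi_\ell$. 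Lemma~\ref{OT} (applied to $X^x$ started at the relevant conditional value, via Lemma~\ref{markov}(iii)) then gives that the expected occupation time of an $\varepsilon$-neighborhood of $\xi_\ell$ over an interval of length $1/n$ is $O(\varepsilon n^{-1/2})$, so each term contributes roughly $n^{-1/2}\cdot(\EE[|\Delta_{j-1}|]+n^{-1})$, and after summing over the $O(n)$ blocks and taking a square root (the indicator part behaves like a sum of nearly-independent small terms, so $L_p$-norms of the sum scale like $\sqrt{n}$ times the per-block size rather than $n$ times it) one arrives at a bound of order $n^{1/2}\cdot n^{-1/2}\cdot(\sup_j\EE[|\Delta_{j-1}|]+n^{-1})^{1/2}$-type expressions; balancing these yields the rate $n^{-3/4}$.

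The self-referential structure — the bound on $\Delta_i$ involves $\Delta_{j-1}$ for $j\le i$ — is handled by setting $M_i = \max_{j\le i}\EE[|\Delta_j|^p]^{1/p}$, deriving an inequality of the schematic form $M_i \le a_n + b_n\sqrt{M_i} + c_n M_i$ with $a_n = O(n^{-3/4})$, $b_n = O(n^{-1/4})$, $c_n = O(n^{-1})$, and solving this quadratic-type inequality to conclude $M_n \le c\,n^{-3/4}$ for $n$ large; the finitely many small $n$ are trivial since $\Delta_i$ has bounded moments (a consequence of $(\mu5)$ and standard SDE moment bounds). \textbf{The main obstacle} I expect is the careful bookkeeping in the indicator term: one must simultaneously control (a) how close $X_s,\widetilde X_s$ are to each other in terms of $\Delta_{j-1}$, (b) how close each is to its frozen Euler-type proxy using $(\mu5)$, and (c) extract genuine cancellation/independence across the $n$ blocks so that the final bound carries $\sqrt n$ rather than $n$ — this last point is where the gain from $3/2$ (the naive rate $n\cdot n^{-1/2}\cdot n^{-1}=n^{-3/2}$ before the square root) down to $3/4$ really comes from, and getting the block-decorrelation rigorous (using \eqref{vvv0}-type independence of the bridge increments together with Lemma~\ref{markov}) is the delicate part.
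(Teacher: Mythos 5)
You take a completely different route from the paper, and the route as sketched has a genuine gap. For reference, the paper's proof is a short reduction: it splits $X_{t_i}-\widetilde X_{t_i}$ into $(X_{t_i}-X_{\underline{t_i}}-\widetilde X_{t_i}+\widetilde X_{\underline{t_i}})+(X_{\underline{t_i}}-\widetilde X_{\underline{t_i}})$, where $\underline{t_i}$ is the largest point of the form $2j/n$ below $t_i$; the first summand is $\le 4\|\mu\|_\infty/n$ almost surely by ($\mu5$) and \eqref{Brm}, and for the second it inserts the transformed quasi-Milstein scheme of~\cite{MGY19b} with step size $2/n$. That scheme is a measurable function of $W$ evaluated only at the points $2j/n$, where $W=\widetilde W$, so the two Milstein approximations of $X$ and $\widetilde X$ coincide, and the triangle inequality together with the known $L_p$-rate $3/4$ of that scheme yields the claim. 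This is where ($\mu2$) enters essentially; your sketch never genuinely uses ($\mu2$), which is already a warning sign, since under ($\mu1$) alone the best known approximation rate is only $1/2$.

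The concrete gap is in your treatment of the indicator part. First, the width of the relevant neighbourhood of $\xi_\ell$ is not of order $|X_{t_{j-1}}-\widetilde X_{t_{j-1}}|+n^{-1}$: inside a block $W$ and $\widetilde W$ do \emph{not} coincide, they differ by $B_s-\widetilde B_s$, the difference of two independent Brownian bridges, which is typically of order $n^{-1/2}$. Freezing $X$ at $X_{t_{j-1}}+W_s-W_{t_{j-1}}$ costs only $O(n^{-1})$, but the indicator compares $X_s$ with $\widetilde X_s$, which are driven by different noises on $(t_{j-1},t_j)$. Hence $\EE\bigl[\int_{t_{j-1}}^{t_j}1_{D_\ell}(X_s,\widetilde X_s)\,ds\bigr]$ is of order $n^{-1}\cdot n^{-1/2}=n^{-3/2}$ per block, and the sum over the $n$ blocks has $L_1$-norm of order $n^{-1/2}$. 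Second, and more fundamentally, these block contributions are nonnegative, so near-independence buys you nothing: the $L_1$-norm of a sum of nonnegative terms \emph{is} the sum of their $L_1$-norms, and there is no $\sqrt n$ gain. The gain that produces the exponent $3/4$ lives entirely in the signs of $\mu(X_s)-\mu(\widetilde X_s)$, which near a discontinuity equal $\pm(\mu(\xi_\ell+)-\mu(\xi_\ell-))$ with exchangeable, hence balanced, sign; you discard exactly this cancellation the moment you apply \eqref{basics1} and pass to $1_{D_\ell}$. After that step the quantity you are bounding is provably of order $n^{-1/2}$, not $n^{-3/4}$. Consistently, your own schematic inequality $M\le a_n+b_n\sqrt{M}+c_nM$ with $b_n=O(n^{-1/4})$ resolves to $M=O(b_n^2)=O(n^{-1/2})$, not $O(n^{-3/4})$. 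A direct proof along your lines would have to keep the signed increments $J_j=\int_{t_{j-1}}^{t_j}(\mu(X_s)-\mu(\widetilde X_s))\,ds$ and control the cross terms $\EE[(X_{t_{j-1}}-\widetilde X_{t_{j-1}})J_j]$ via a smoothing estimate for $x\mapsto\EE[\mu(X^x_u)]$, whose Lipschitz constant degenerates like $u^{-1/2}$ near the $\xi_\ell$; this is precisely the delicate analysis the paper sidesteps by invoking the Milstein error bound.
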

\begin{proof}
Let $i\in\{0, \ldots, n\}$. We have
\begin{equation}\label{pp4}
\EE\bigl[|X_{t_i}-\widetilde X_{t_i}|^p\bigr]^{1/p} \leq \EE\bigl[|X_{\underline{t_i}}-\widetilde X_{\underline{t_i}}|^p\bigr]^{1/p}+\EE\bigl[|X_{t_i}- X_{\underline {t_i}}-\widetilde X_{t_i}+\widetilde X_{\underline{t_i}}|^p\bigr]^{1/p},
\end{equation}
where
\[
\underline {t_i}=\max\{\tau\in\{2j/n\colon j=0, \ldots, n/2\}\colon t_j\geq \tau\}.
\]
Observing the fact that 
\[
t_i-\underline {t_i}\leq \frac{2}{n}
\]
 and employing \eqref{Brm} we obtain 
 that 
\begin{align*}
|X_{t_i}- X_{\underline {t_i}}-\widetilde X_{t_i}+\widetilde X_{\underline{t_i}}|&=\Bigl|\int_{\underline{t_i}}^{t_i}(\mu(X_s)-\mu(\widetilde X_s))\, ds\Bigr|
\leq \frac{4 \|\mu\|_{\infty}}{n}.
\end{align*}
Observing  ($\mu5$) we thus see that there exists $c\in (0,\infty)$ such that
\begin{equation}\label{pp3}
\max_{i\in\{0,\dots,n\}}\EE\bigl[|X_{t_i}- X_{\underline {t_i}}-\widetilde X_{t_i}+\widetilde X_{\underline{t_i}}|^p\bigr]^{1/p}\leq \frac{c}{n}.
\end{equation}

Put $k_n = n/2$ and let $ Y_{k_n}=(Y_{k_n, t})_{t\in[0,1]}$ and 
$\widetilde Y_{k_n}=(\widetilde Y_{k_n, t})_{t\in[0,1]}$ denote the transformed time-continuous quasi-Milstein schemes from~\cite[Section 4]{MGY19b} that have step-size $1/k_n$ and are associated to the SDE~\eqref{generalsde} on the time-interval $[0,1]$ with initial value $x_0$ and driving Brownian motion $W$ and $\widetilde W$, respectively.  Since $\mu$ satisfies ($\mu1$) and ($\mu2$) we may apply  Theorem 4 in~\cite{MGY19b} to obtain that  
\begin{equation}\label{pp1}
\max_{j\in\{0, \ldots, k_n\}}\Bigl(\EE\bigl[|X_{j/k_n}-Y_{k_n, j/k_n}|^p\bigr]^{1/p} 
+ \EE\bigl[|\widetilde X_{j/k_n}-\widetilde Y_{k_n, j/k_n}|^p\bigr]^{1/p}\Bigr)\leq \frac{c}{k_n^{3/4}}.
\end{equation}
By the definition of $Y_{k_n}$ and $\widetilde Y_{k_n}$  we have for every $j\in\{0, \ldots, k_n\}$,
\begin{equation}\label{tm}
Y_{k_n, j/k_n}=g_j( W_{1/k_n}, \ldots, W_{j/k_n}),
\quad \widetilde Y_{k_n, j/k_n}=g_j( \widetilde W_{1/k_n}, \ldots,\widetilde W_{j/k_n})
\end{equation}
with a
measurable $g_j\colon\R^j\to\R$. Observing \eqref{Brm} we conclude 
by~\eqref{pp1} and~\eqref{tm} that
\begin{align*}
\max_{i\in\{0,\dots,n\}} \EE\bigl[|X_{\underline{t_i}}-\widetilde X_{\underline{t_i}}|^p\bigr]^{1/p} & \leq \max_{i\in\{0,\dots,n\}}\Bigl(\EE\bigl[|X_{\underline{t_i}}-Y_{k_n, \underline{t_i}}|^p\bigr]^{1/p}+\EE\bigl[|\widetilde X_{\underline{t_i}}-Y_{k_n, \underline{t_i}}|^p\bigr]^{1/p}\Bigr)\\
& = \max_{i\in\{0,\dots,n\}}\Bigl(\EE\bigl[|X_{\underline{t_i}}-Y_{k_n, \underline{t_i}}|^p\bigr]^{1/p}+\EE\bigl[|\widetilde X_{\underline{t_i}}-\widetilde Y_{k_n, \underline{t_i}}|^p\bigr]^{1/p}\Bigr)\\
& \leq \frac{c}{n^{3/4}}.
\end{align*}
Combining the latter estimate with \eqref{pp4} and \eqref{pp3} yields the statement of the lemma.
\end{proof}

The following three lemmas are crucial to obtain a lower bound for the right hand side of \eqref{gL1} in the case $p=2$. 

\begin{lemma}\label{mixed}
Assume that $\mu$ satisfies ($\mu1$) and ($\mu4$). Let $x_0\in\R$ and let $X,\widetilde X\colon [0,1]\times \Omega\to \R$ be strong solutions of the SDE~\eqref{generalsde} on the time-interval $[0,1]$ with initial value $x_0$ and driving Brownian motion $W$ and $\widetilde W$, respectively. Then for all $i\in\{1,\dots,n\}$ we have
\begin{equation}\label{neweqq1}
\EE\Bigl[(X_{t_{i-1}}-\widetilde X_{t_{i-1}})\,\int_{t_{i-1}}^{t_i}(\mu(X_s)-\mu(\widetilde X_s)) \, ds\Bigr] \ge 0.
\end{equation}
\end{lemma}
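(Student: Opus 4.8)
The plan is to deduce the nonnegativity of each summand $m_i$ from the ordering of the two solutions at the left endpoint $t_{i-1}$ of each subinterval, combined with the monotonicity of $\mu$. The starting observation is that $W$ and $\widetilde W$ are \emph{coupled} at the grid points: by construction $\overline W = \overline{\widetilde W}$ (both have the same piecewise linear interpolation), hence $W_{t_i} = \widetilde W_{t_i}$ for all $i$, see \eqref{Brm}. Consequently, on each subinterval $[t_{i-1},t_i]$ the Brownian increments $W_s - W_{t_{i-1}}$ and $\widetilde W_s - \widetilde W_{t_{i-1}}$ differ only by the bridge parts $B_s$ and $\widetilde B_s$, while the drift increments are controlled pathwise.

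The first step is to fix $i$ and condition on $(\overline W, (B_s)_{s \le t_{i-1}}, (\widetilde B_s)_{s \le t_{i-1}})$, which determines $X_{t_{i-1}}$ and $\widetilde X_{t_{i-1}}$. On the event $\{X_{t_{i-1}} \le \widetilde X_{t_{i-1}}\}$ I would apply the comparison principle for the SDE \eqref{generalsde} on the interval $[t_{i-1},1]$: since pathwise uniqueness holds (Lemma \ref{markov}), and since the process $(X_{t_{i-1}+t})_t$ is, conditionally, a strong solution started from $X_{t_{i-1}}$ driven by a Brownian motion that agrees with the one driving $(\widetilde X_{t_{i-1}+t})_t$ up to the common bridge/increment structure — here one has to be a little careful, because $X$ and $\widetilde X$ are driven by \emph{different} Brownian motions $W$ and $\widetilde W$. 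The honest way is: conditionally on the past, both continuations are measurable images via the common flow map $F$ of Lemma \ref{markov}(iv) of their respective initial values and driving noises; but the ordering $X^x_u \le X^y_u$ for $x \le y$ in Lemma \ref{markov}(iv) is for the \emph{same} driving Brownian motion. So the cleaner route is to compare on each subinterval separately using that $W$ and $\widetilde W$ coincide at the endpoints. Precisely: for $s \in [t_{i-1},t_i]$ write $X_s = X_{t_{i-1}} + \int_{t_{i-1}}^s \mu(X_r)\,dr + (W_s - W_{t_{i-1}})$ and likewise for $\widetilde X$ with $\widetilde W$; since $\mu$ is increasing, a Gronwall/comparison argument applied pathwise on $[t_{i-1},t_i]$ shows that the \emph{sign} of $X_s - \widetilde X_s$ is constant on $[t_{i-1},t_i]$ and equals the sign of $X_{t_{i-1}} - \widetilde X_{t_{i-1}}$ — this is the key monotone-ODE fact. (Formally: if $X_{t_{i-1}} \le \widetilde X_{t_{i-1}}$ one shows $X_s \le \widetilde X_s$ for all $s \in [t_{i-1},t_i]$ by noting that the difference $\Delta_s = X_s - \widetilde X_s$ satisfies $\Delta_s = \Delta_{t_{i-1}} + \int_{t_{i-1}}^s(\mu(X_r)-\mu(\widetilde X_r))\,dr$ with the driving noise cancelling; at a first zero of $\Delta$ one would need the integral to be positive there, contradicting $\mu$ increasing.)

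Granting that, on $\{X_{t_{i-1}} \le \widetilde X_{t_{i-1}}\}$ one has $X_s \le \widetilde X_s$ throughout $[t_{i-1},t_i]$, hence $\mu(X_s) - \mu(\widetilde X_s) \le 0$ for all such $s$ (again $\mu$ increasing), so $(X_{t_{i-1}} - \widetilde X_{t_{i-1}}) \int_{t_{i-1}}^{t_i}(\mu(X_s) - \mu(\widetilde X_s))\,ds \ge 0$ as a product of two nonpositive quantities; on the complementary event $\{X_{t_{i-1}} \ge \widetilde X_{t_{i-1}}\}$ the same argument gives a product of two nonnegative quantities. Either way the integrand in \eqref{neweqq1} is pointwise $\ge 0$, so its expectation is $\ge 0$. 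The main obstacle I anticipate is making the pathwise comparison on $[t_{i-1},t_i]$ fully rigorous in the presence of the discontinuities of $\mu$: a naive Gronwall estimate fails because $\mu$ is only piecewise Lipschitz, so one must instead invoke pathwise uniqueness for \eqref{generalsde} (which holds by Lemma \ref{markov}) together with a comparison theorem valid for such drifts — most cleanly by using the transformation $G$ from the proof of Lemma \ref{markov} to pass to an SDE with Lipschitz coefficients where the standard comparison result \cite[Proposition 5.2.18]{ks91} applies, exactly as in the proof of Lemma \ref{markov}(iv), but run on the subinterval $[t_{i-1},t_i]$ with the two solutions started from the (random) values $X_{t_{i-1}}$ and $\widetilde X_{t_{i-1}}$ and driven by $W$ and $\widetilde W$ respectively, using $W_{t_i} = \widetilde W_{t_i}$ only at the endpoint. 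I would organize the write-up so that this comparison step is isolated as the single technical ingredient, everything else being the elementary sign bookkeeping described above.
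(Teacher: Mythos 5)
Your plan rests on a claim that is false: on the interior of the subinterval the driving noise does \emph{not} cancel. Writing out the difference, one has for $s\in[t_{i-1},t_i]$
\begin{equation*}
X_s-\widetilde X_s=(X_{t_{i-1}}-\widetilde X_{t_{i-1}})+\int_{t_{i-1}}^s(\mu(X_r)-\mu(\widetilde X_r))\,dr+(W_s-W_{t_{i-1}})-(\widetilde W_s-\widetilde W_{t_{i-1}}),
\end{equation*}
and the last two terms equal $B_s-\widetilde B_s$, which vanishes only at $s=t_{i-1}$ and $s=t_i$; for intermediate $s$ the two bridges are independent and generically different. So $\Delta_s$ does not satisfy the monotone integral equation you state, there is no pathwise sign preservation on $[t_{i-1},t_i]$, and in fact the integrand in \eqref{neweqq1} is \emph{not} almost surely nonnegative: if, say, $X_{t_{i-1}}$ slightly exceeds $\widetilde X_{t_{i-1}}$ but $\widetilde B$ runs well above $B$ on the interval, then $\mu(X_s)-\mu(\widetilde X_s)\le 0$ on most of the interval and the product is negative. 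The inequality of the lemma holds only in expectation, so any purely pathwise argument is doomed. (You correctly flagged the different-driving-noise issue, but the proposed fix --- using only that $W$ and $\widetilde W$ agree at the endpoints --- does not repair it.)

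The paper's proof gets around this by a distributional, not pathwise, argument. Set $V_t=W_{t_{i-1}+t}-W_{t_{i-1}}$ and $\widetilde V_t=\widetilde W_{t_{i-1}+t}-\widetilde W_{t_{i-1}}$ and let $F$ be the flow map from Lemma~\ref{markov}. One first checks that $(X_{t_{i-1}},\widetilde X_{t_{i-1}})$ is independent of $(V,\widetilde V)$. Conditioning on $(X_{t_{i-1}},\widetilde X_{t_{i-1}})=(y,\tilde y)$, the conditional expectation becomes $(y-\tilde y)\,\EE\bigl[\int(\mu(F(y,V)(s))-\mu(F(\tilde y,\widetilde V)(s)))\,ds\bigr]$; since $\PP^{V}=\PP^{\widetilde V}$ and the two terms are integrated separately, one may replace $\widetilde V$ by $V$ in the second term. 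Only now, with \emph{both} solutions driven by the \emph{same} Brownian motion $V$, does the comparison theorem of Lemma~\ref{markov}(iv) apply, giving $(y-\tilde y)(\mu(F(y,V)(s))-\mu(F(\tilde y,V)(s)))\ge 0$ pathwise and hence nonnegativity of the conditional expectation. The step you are missing is precisely this replacement of $\widetilde V$ by $V$ under the expectation, which is legitimate only after conditioning and factoring; without it the comparison theorem is simply not applicable.
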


\begin{proof}
Fix $i\in\{1,\dots,n\}$ and choose $F\colon\R\times C([0,t_i-t_{i-1}],\R) \to C([0,t_i-t_{i-1}],\R)$ according to Lemma~\ref{markov}. Put
\[
V=(V_t= W_{t_{i-1}+t}-W_{t_{i-1}})_{t\in[0,t_i-t_{i-1}]}\text{ and }\widetilde V=(\widetilde V_t= \widetilde W_{t_{i-1}+t}-\widetilde W_{t_{i-1}})_{t\in[0,t_i-t_{i-1}]}.
\]
Since the processes $(X_{t_{i-1}+t})_{t\in[0,t_i-t_{i-1}]}$ and $(\widetilde X_{t_{i-1}+t})_{t\in[0,t_i-t_{i-1}]}$ are solutions of the SDE~\eqref{generalsde} 
on the time interval $[t_{i-1}, t_i]$
with initial value $X_{t_{i-1}}$   and driving Brownian motion $V$ and with initial value $\widetilde X_{t_{i-1}}$ and driving Brownian motion $\widetilde V$, respectively, we know by Lemma~\ref{markov}(i) that $\PP$-almost surely
\begin{equation}\label{bbb3}
\begin{aligned}
(X_{t_{i-1}+t})_{t\in[0,t_i-t_{i-1}]} & =F(X_{t_{i-1}},V),\\
(\widetilde X_{t_{i-1}+t})_{t\in[0,t_i-t_{i-1}]} & =F(\widetilde X_{t_{i-1}},\widetilde V).
\end{aligned}
\end{equation}

Note that the random vector $(X_{t_{i-1}},\widetilde X_{t_{i-1}})$ is $\mathcal G/\mathcal B(\R^2)$-measurable, where $\mathcal B(\R^2)$ is the Borel $\sigma$-field in $\R^2$ and $\mathcal G\subset \mathcal F$ is the completion of the $\sigma$-field generated by $( W_t,\widetilde W_t)_{t\in [0,t_{i-1}]}$, i.e.
\[
\mathcal G = \sigma\bigl(\sigma(\{( W_t,\widetilde W_t)\colon t\in [0,t_{i-1}]\})\cup \mathcal N\bigr),
\]
 where $\mathcal N = \{N\in \mathcal F\colon \PP(N)=0\}$. 
 Furthermore, by definition of $\widetilde W$ we have
\begin{align*}
( W_t,\widetilde W_t)_{t\in [0,t_{i-1}]}& = \psi(( W_t,\widetilde B_t)_{ t\in [0,t_{i-1}]}),\\
(V_{t}, \widetilde V_{t})_{t\in [0,t_i-t_{i-1}]} &= 
\varphi( (W_t-W_{t_{i-1}},\widetilde B_t)_{ t\in [t_{i-1},t_i]})
\end{align*}
for some measurable mappings $\psi\colon C([0,t_{i-1}],\R)^2 \to C([0,t_{i-1}],\R)^2$ and $\varphi\colon C([t_{i-1},t_i],\R)^2 \to C([0,t_i-t_{i-1}],\R)^2$,
which implies that $( W_t,\widetilde W_t)_{t\in [0,t_{i-1}]}$ and $(V_{t}, \widetilde V_{t})_{t\in [0,t_i-t_{i-1}]}$ are independent. As a consequence, the $\sigma$-fields $\mathcal G$ and $\sigma(V,\widetilde V)$ are independent, which in turn implies the independence of $(X_{t_{i-1}},\widetilde X_{t_{i-1}})$ and $(V,\widetilde V)$.

Using~\eqref{bbb3} we thus have for $\PP^{(X_{t_{i-1}},\widetilde X_{t_{i-1}})}$-almost all $(y,\tilde y)\in \R^2$ that
\begin{equation}\label{ccc1}
\begin{aligned}
& \EE\Bigl[(X_{t_{i-1}}-\widetilde X_{t_{i-1}})\,\int_{t_{i-1}}^{t_i}(\mu(X_s)-\mu(\widetilde X_s)) \, ds\Bigl|(X_{t_{i-1}},\widetilde X_{t_{i-1}})=(y,\tilde y)\Bigr]\\
& \qquad\qquad = (y-\tilde y)\, \EE\Bigl[\int_{t_{i-1}}^{t_i} \bigl(\mu(F(y,V)(s)) - \mu(F(\tilde y,\widetilde V)(s)) \bigr) \, ds \Bigr]\\
& \qquad\qquad = (y-\tilde y)\, \EE\Bigl[\int_{t_{i-1}}^{t_i} \bigl(\mu(F(y,V)(s)) - \mu(F(\tilde y,V)(s)) \bigr) \, ds \Bigr].
\end{aligned}
\end{equation}

By Lemma~\ref{markov}(ii) we know that $F(y,V)$ and $F(\tilde y,V)$ are strong solutions of the SDE~\eqref{generalsde} on the time-interval $[0,t_i-t_{i-1}]$ with driving Brownian motion $V$ and initial value $y$ and $\tilde y$, respectively. Using Lemma~\ref{markov}(iv) and the assumption that $\mu$ is increasing we conclude that $\PP$-almost surely
\[
\forall\, s\in [t_{i-1},t_i]\colon \quad (y-\tilde y)\,  \bigl(\mu(F(y,V)(s)) - \mu(F(\tilde y,V)(s))\bigr) \ge 0.
\]
Combining the latter fact with~\eqref{ccc1} we conclude that  for $\PP^{(X_{t_{i-1}},\widetilde X_{t_{i-1}})}$-almost all $(y,\tilde y)\in \R^2$
\begin{equation}\label{cvcv1}
\EE\Bigl[(X_{t_{i-1}}-\widetilde X_{t_{i-1}})\,\int_{t_{i-1}}^{t_i}(\mu(X_s)-\mu(\widetilde X_s)) \, ds\Bigl|(X_{t_{i-1}},\widetilde X_{t_{i-1}})=(y,\tilde y)\Bigr] \ge 0,
\end{equation}
which clearly implies~\eqref{neweqq1}. 
\end{proof}

\begin{lemma}\label{diagonal1} Assume that $\mu$ satisfies ($\mu1$) and ($\mu5$). Let $x_0\in\R$ and let $X,\widetilde X\colon [0,1]\times \Omega\to \R$ be strong solutions of the SDE~\eqref{generalsde} on the time-interval $[0,1]$ with initial value $x_0$ and driving Brownian motion $W$ and $\widetilde W$, respectively. Then 
there exists $c\in (0,\infty)$ such that for all $i\in\{1,\dots,n\}$ with $t_{i}> 1/2$ it holds  
\begin{equation}\label{i3}
\begin{aligned}
& \EE\Bigl[\Bigl(\int_{t_{i-1}}^{t_i} \bigl(\mu(X_s)-\mu(\widetilde X_s)\bigr)\, ds\Bigr)^2\Bigr]\\ & \qquad\qquad \ge \frac{1}{4}
\EE\Bigl[\Bigr(\int_{t_{i-1}}^{t_i} \bigl(\mu(X_{t_{i-1}} + W_s-W_{t_{i-1}})-\mu( X_{t_{i-1}} + \widetilde W_s-\widetilde W_{t_{i-1}})\bigr)\, ds\Bigr)^2\Bigr] \\
& \qquad\qquad\qquad\qquad  -\frac{c}{n^{5/2+1/16}}.
\end{aligned}
\end{equation}
\end{lemma}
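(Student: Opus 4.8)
The plan is to compare, on each sub-interval $[t_{i-1},t_i]$ with $t_i>1/2$, the integrand $\mu(X_s)-\mu(\widetilde X_s)$ with the ``frozen-drift'' integrand $\mu(X_{t_{i-1}}+W_s-W_{t_{i-1}})-\mu(X_{t_{i-1}}+\widetilde W_s-\widetilde W_{t_{i-1}})$, and to show that the $L_2$-error committed in this replacement is of order $n^{-5/4-1/32}$, which after squaring gives the claimed $n^{-5/2-1/16}$ correction term. First I would abbreviate, for $s\in[t_{i-1},t_i]$,
\[
\Delta_s = \mu(X_s)-\mu(\widetilde X_s),\qquad
\Delta_s^{\mathrm{fr}} = \mu(X_{t_{i-1}}+W_s-W_{t_{i-1}})-\mu(X_{t_{i-1}}+\widetilde W_s-\widetilde W_{t_{i-1}}),
\]
so that, writing $E_i=\int_{t_{i-1}}^{t_i}\Delta_s\,ds$ and $R_i^{1/2}=\int_{t_{i-1}}^{t_i}\Delta_s^{\mathrm{fr}}\,ds$, the elementary inequality $a^2\ge \tfrac14 b^2 - (b-a)^2$ (valid since $a^2 = (b-(b-a))^2 \ge \tfrac12 b^2 - (b-a)^2 \ge \tfrac14 b^2-(b-a)^2$) reduces the claim to the bound
\[
\EE\Bigl[\Bigl(\int_{t_{i-1}}^{t_i}(\Delta_s-\Delta_s^{\mathrm{fr}})\,ds\Bigr)^2\Bigr]\le \frac{c}{n^{5/2+1/16}}
\]
uniformly in $i$ with $t_i>1/2$. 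The factor $n^{5/2}$ comes for free from two integrations over an interval of length $\le 2/n$ (Cauchy--Schwarz gives $(\int_{t_{i-1}}^{t_i}\cdots\,ds)^2\le (2/n)\int_{t_{i-1}}^{t_i}(\Delta_s-\Delta_s^{\mathrm{fr}})^2\,ds$), so the real task is to show $\EE[(\Delta_s-\Delta_s^{\mathrm{fr}})^2]\le c\,n^{-3/2+something}$, actually $\le c\,n^{-3/2-1/16}$, i.e. to beat the trivial rate $n^{-3/2}$ by a factor $n^{-1/16}$.

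The key step is to control $\Delta_s-\Delta_s^{\mathrm{fr}}$ using the structural bound~\eqref{basics1} from Lemma~\ref{basics}. Writing $Y_s = X_{t_{i-1}}+W_s-W_{t_{i-1}}$ and $\widetilde Y_s = X_{t_{i-1}}+\widetilde W_s-\widetilde W_{t_{i-1}}$, I would estimate
\[
|\Delta_s-\Delta_s^{\mathrm{fr}}| \le |\mu(X_s)-\mu(Y_s)| + |\mu(\widetilde X_s)-\mu(\widetilde Y_s)|,
\]
and then~\eqref{basics1} bounds each difference by $c(|X_s-Y_s| + \sum_{\ell=1}^k 1_{D_\ell}(X_s,Y_s))$, respectively $c(|\widetilde X_s-\widetilde Y_s| + \sum_{\ell=1}^k 1_{D_\ell}(\widetilde X_s,\widetilde Y_s))$. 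For the Lipschitz part, $|X_s-Y_s| = |\int_{t_{i-1}}^s \mu(X_u)\,du|\le \|\mu\|_\infty\, (s-t_{i-1})\le 2\|\mu\|_\infty/n$ by ($\mu5$) (and the analogous bound for the tilde-process), which contributes a term of order $n^{-2}$ to $\EE[(\Delta_s-\Delta_s^{\mathrm{fr}})^2]$, hence of order $n^{-4}\le c\,n^{-5/2-1/16}$ to the double integral — negligible. The indicator part is where Lemma~\ref{Yproc} enters: by Cauchy--Schwarz,
\[
\EE\Bigl[\Bigl(\int_{t_{i-1}}^{t_i}\sum_{\ell=1}^k 1_{D_\ell}(X_s,Y_s)\,ds\Bigr)^2\Bigr]
\le k\sum_{\ell=1}^k \EE\Bigl[\Bigl(\int_{t_{i-1}}^{t_i} 1_{\{(X_s-\xi_\ell)(X_{t_{i-1}}+W_s-W_{t_{i-1}}-\xi_\ell)\le 0\}}\,ds\Bigr)^2\Bigr]
\le c\,(t_i-t_{i-1})^3\le \frac{c}{n^3},
\]
applying Lemma~\ref{Yproc} with $x$ replaced by (the conditionally deterministic) $X_{t_{i-1}}$ — more precisely, conditioning on $\mathcal F_{t_{i-1}}$, using the Markov property Lemma~\ref{markov}(iii) to reduce to a solution started at a deterministic point, and then invoking Lemma~\ref{Yproc}. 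The same works verbatim for the tilde-process since $(\widetilde X_{t_{i-1}+t})_t$ solves~\eqref{generalsde} driven by the Brownian motion $(\widetilde W_{t_{i-1}+t}-\widetilde W_{t_{i-1}})_t$ with initial value $\widetilde X_{t_{i-1}} = X_{t_{i-1}}$ by~\eqref{Brm}.

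Collecting: the double integral $\EE[(\int(\Delta_s-\Delta_s^{\mathrm{fr}})\,ds)^2]$ is bounded by $c/n^3$, which is even better than the required $c/n^{5/2+1/16}$, so in fact the proof is somewhat more robust than the statement demands and the explicit exponent $1/16$ in~\eqref{i3} is chosen merely for later convenience in Subsection~\ref{sub24}. I expect the main obstacle to be purely bookkeeping: making the conditioning argument rigorous — i.e. verifying that $X_{t_{i-1}}$ and the increments $(W_{t_{i-1}+t}-W_{t_{i-1}},\widetilde W_{t_{i-1}+t}-\widetilde W_{t_{i-1}})_{t}$ are independent (which follows as in the proof of Lemma~\ref{mixed} from the construction of $\widetilde W$), so that one may legitimately condition on $X_{t_{i-1}}=\widetilde X_{t_{i-1}}=y$, replace the processes by $X^y$ and apply Lemma~\ref{Yproc} with the deterministic initial value $y$, and then integrate back over $\PP^{X_{t_{i-1}}}$. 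The restriction $t_i>1/2$ is not actually needed for this estimate but is carried along because it is the regime in which the companion lower bound~\eqref{i4} for $\EE[R_i]$ is available.
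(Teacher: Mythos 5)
Your reduction via $a^2\ge\tfrac14 b^2-(b-a)^2$ and your treatment of the terms $|\mu(X_s)-\mu(X_{t_{i-1}}+W_s-W_{t_{i-1}})|$ (Lipschitz part of order $1/n$ by ($\mu$5), indicator part handled by conditioning, Lemma~\ref{markov}(iii) and Lemma~\ref{Yproc}) match the paper's treatment of its terms $B_i$ and $D_i$. But there is a genuine gap in the tilde half of your decomposition: you assert that $(\widetilde X_{t_{i-1}+t})_t$ has ``initial value $\widetilde X_{t_{i-1}}=X_{t_{i-1}}$ by~\eqref{Brm}''. Equation~\eqref{Brm} says $W_{t_i}=\widetilde W_{t_i}$, i.e.\ the driving Brownian motions agree at the grid points; it does \emph{not} imply $X_{t_i}=\widetilde X_{t_i}$ — indeed the entire paper is about lower-bounding $\EE[|X_1-\widetilde X_1|]$, which would vanish if that were true. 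Consequently
\[
\widetilde X_s-\bigl(X_{t_{i-1}}+\widetilde W_s-\widetilde W_{t_{i-1}}\bigr)=\bigl(\widetilde X_{t_{i-1}}-X_{t_{i-1}}\bigr)+\int_{t_{i-1}}^s\mu(\widetilde X_u)\,du,
\]
and the first summand is only of order $n^{-3/4}$ (Lemma~\ref{lab2}), not $1/n$, so your ``verbatim'' argument for the tilde process fails. The paper isolates exactly this mismatch as a separate term
\[
E_i=\int_{t_{i-1}}^{t_i}\bigl(\mu(\widetilde X_{t_{i-1}}+\widetilde W_t-\widetilde W_{t_{i-1}})-\mu(X_{t_{i-1}}+\widetilde W_t-\widetilde W_{t_{i-1}})\bigr)\,dt,
\]
and bounding $\EE[E_i^2]$ is the real work of the lemma: one combines the bound~\eqref{basics1}, the estimate $\EE[|X_{t_{i-1}}-\widetilde X_{t_{i-1}}|^p]^{1/p}\le c\,n^{-3/4}$ from Lemma~\ref{lab2}, the density bound of Lemma~\ref{Xpg} (which is where $t_i>1/2$ is genuinely needed, contrary to your closing remark), and a threshold $\gamma=n^{-9/16}$ balancing $\PP(|X_{t_{i-1}}+u-\xi_j|\le\gamma)\le c\gamma$ against $\PP(|X_{t_{i-1}}-\widetilde X_{t_{i-1}}|>\gamma)\le c\gamma^{-3}n^{-9/4}$. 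This yields $\EE[E_i^2]\le c\,n^{-5/2-1/16}$, which is the source of the exponent $1/16$ in~\eqref{i3}; it is not a convenience choice, and your claimed overall error $c/n^3$ for the replacement is not attainable by this route.
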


\begin{proof} 
For all $i\in  \{1,\dots,n\}$  put
\begin{align*}
A_i & = \int_{t_{i-1}}^{t_i}\bigl( \mu(X_{t_{i-1}} + W_t-W_{t_{i-1}})-\mu( X_{t_{i-1}} + \widetilde W_t-\widetilde W_{t_{i-1}})\bigr)\, dt,\\
B_i & =  \int_{t_{i-1}}^{t_i}\bigl(\mu(X_{t_{i-1}} + W_t-W_{t_{i-1}})-  \mu(X_t)\bigr)\, dt,\\
C_i & =  \int_{t_{i-1}}^{t_i}\bigl(\mu(X_t) -\mu(\widetilde X_t)\bigr)\, dt,\\
D_i & =   \int_{t_{i-1}}^{t_i}\bigl(\mu(\widetilde X_t) - \mu( \widetilde X_{t_{i-1}} + \widetilde W_t-\widetilde W_{t_{i-1}})\bigr)\, dt,\\
E_i & = \int_{t_{i-1}}^{t_i} \bigl(\mu( \widetilde X_{t_{i-1}} + \widetilde W_t-\widetilde W_{t_{i-1}}) - \mu( X_{t_{i-1}} + \widetilde W_t-\widetilde W_{t_{i-1}})\bigr)\, dt.
\end{align*} 
Clearly, $A_i = B_i + C_i + D_i + E_i$, which yields 
\begin{equation}\label{nbv2}
\EE[A_i^2] \le 4(\EE[B_i^2] + \EE[C_i^2] + \EE[D_i^2] + \EE[E_i^2]) = 4(2\EE[B_i^2] + \EE[C_i^2]  + \EE[E_i^2]).
\end{equation}

By Lemma~\ref{basics} we see that 
 there exists $c\in(0, \infty)$ such that for all $i\in\{1,\dots,n\}$ and all $t\in [t_{i-1},t_i]$,
\begin{align*}
& |\mu(X_{t_{i-1}}+W_t-W_{t_{i-1}})-\mu(X_t)| \\
& \qquad\qquad \leq  c\, \bigl(|X_{t_{i-1}}+W_t-W_{t_{i-1}}-X_t|+\sum_{j=1}^k 1_{\{(X_{t_{i-1}}+W_t-W_{t_{i-1}}-\xi_j)\, (X_t-\xi_j)\leq 0\}}\bigr).
\end{align*}
For all $i\in\{1,\dots,n\}$ and all $t\in [t_{i-1},t_i]$ we furthermore have
\[
|X_{t_{i-1}}+W_t-W_{t_{i-1}}-X_t|=\Bigl|\int_{t_{i-1}}^{t} \mu(X_s)\, ds\Bigr|\leq \|\mu\|_{\infty} \, (t-t_{i-1}).
\]
Employing ($\mu5$), Lemma~\ref{Yproc} and~\eqref{b5i} we thus obtain that  there exist $c_1, c_2,c_3\in(0, \infty)$ such that
for all $i\in\{1,\dots,n\}$,
\begin{equation}\label{uno}
\begin{aligned}
\EE[B_i^2]&\leq c_1 \,  \EE\Bigl[\Bigl((t_i-t_{i-1})^2+\sum_{j=1}^k \int_{t_{i-1}}^{t_i}1_{\{(X_{t_{i-1}}+W_t-W_{t_{i-1}}-\xi_j)\, (X_t-\xi_j)\leq 0\}} \, du\Bigr)^2\Bigr]\\
&\leq c_2\, \Bigl((t_i-t_{i-1})^4+\sum_{j=1}^k \EE\Bigl[\Bigl(\int_{t_{i-1}}^{t_i}1_{\{(X_{t_{i-1}}+W_t-W_{t_{i-1}}-\xi_j)\, (X_t-\xi_j)\leq 0\}} \, du\Bigr)^2\Bigr]\Bigr)\\
&\leq c_3\, \bigl((t_i-t_{i-1})^4+(t_i-t_{i-1})^3) \le \frac{16c_3}{n^3}.
\end{aligned}
\end{equation}

Clearly, for all $i\in  \{1,\dots,n\}$, 
\[
E_i^2 \le 2\|\mu\|_\infty\,(t_i-t_{i-1})\,\int_{t_{i-1}}^{t_i} \bigl|\mu( \widetilde X_{t_{i-1}} + \widetilde W_s-\widetilde W_{t_{i-1}}) - \mu( X_{t_{i-1}} + \widetilde W_s-\widetilde W_{t_{i-1}})\bigr|\, dt,
\] 
and therefore
\begin{equation}\label{b1}
\EE[E_i^2] 
  \leq 2\|\mu\|_{\infty}\, (t_i-t_{i-1})\,\int_{t_{i-1}}^{t_i}\EE[|\mu(X_{t_{i-1}}+\widetilde W_t-\widetilde W_{t_{i-1}})-\mu(\widetilde X_{t_{i-1}}+\widetilde W_t-\widetilde W_{t_{i-1}})|] \, dt.
 \end{equation}
Recall from the proof of Lemma~\ref{mixed} that for all $i\in  \{1,\dots,n\}$, $(X_{t_{i-1}},\widetilde X_{t_{i-1}})$ and $(\widetilde W_t - \widetilde W_{t_{i-1}})_{t\in [t_{i-1},t_i]}$ are independent. Hence, for all $i\in  \{1,\dots,n\}$,
\begin{equation}\label{tmg44}
\begin{aligned}
& \int_{t_{i-1}}^{t_i}\EE[|\mu(X_{t_{i-1}}+\widetilde W_t-\widetilde W_{t_{i-1}})-\mu(\widetilde X_{t_{i-1}}+\widetilde W_t-\widetilde W_{t_{i-1}})|] \, dt \\
& \qquad\qquad =  \int_{t_{i-1}}^{t_i}\int_{\R}\EE[|\mu(X_{t_{i-1}}+u)-\mu(\widetilde X_{t_{i-1}}+u)|]\, \PP^{\widetilde W_t-\widetilde W_{t_{i-1}}} (du)\, dt. 
\end{aligned}
\end{equation}
By Lemma~\ref{basics} we know that there exists $c\in(0, \infty)$ such that for
$i\in\{1, \ldots, n\}$ and 
all $u\in\R$,
\begin{equation}\label{b2}
\begin{aligned}
& |\mu(X_{t_{i-1}}+u)-\mu(\widetilde X_{t_{i-1}}+u)|\\
 &\qquad\qquad\leq c\, \bigl(|X_{t_{i-1}}-\widetilde X_{t_{i-1}}|+\sum_{j=1}^k 1_{\{(X_{t_{i-1}}+u-\xi_j)\, (\widetilde X_{t_{i-1}}+u-\xi_j)\leq 0\}}\bigr).
\end{aligned}
\end{equation}
Lemma~\ref{lab2} implies that there exists $c\in (0, \infty)$ such that 
\begin{equation}\label{b3}
\max_{i\in\{1,\dots,n\}}\,\EE[|X_{t_{i-1}}-\widetilde X_{t_{i-1}}|]\leq \frac{c}{n^{3/4}}.
\end{equation}
Moreover, for 
 all $i\in\{1, \ldots, n\}$, 
all $u\in\R$, all $j\in\{1, \ldots, k\}$ and all $\gamma\in(0, \infty)$ we have
\begin{equation}\label{f1}
\begin{aligned}
\EE[1_{\{(X_{t_{i-1}}+u-\xi_j)\, (\widetilde X_{t_{i-1}}+u-\xi_j)\leq 0\}}]&=\PP((X_{t_{i-1}}+u-\xi_j)\, (\widetilde X_{t_{i-1}}+u-\xi_j)\leq 0)\\
&\leq \PP(|X_{t_{i-1}}+u-\xi_j|\leq |X_{t_{i-1}}-\widetilde X_{t_{i-1}}|)\\
&\leq \PP(|X_{t_{i-1}}+u-\xi_j|\leq \gamma)+\PP(|X_{t_{i-1}}-\widetilde X_{t_{i-1}}|>\gamma).
\end{aligned}
\end{equation}
Due to the assumptions $n\in2\N$ and~\eqref{b5i} there exists 
$r\in\{1,\dots,n\}$ with $t_r= 1/2$.    Using Lemma \ref{Xpg} with $\tau =1/2$ we obtain that 
there exists $c\in(0, \infty)$ such that for
all $j\in\{1, \ldots, k\}$,
all $u\in\R$
 and all $\gamma\in(0, \infty)$,
\begin{equation}\label{f2}
\max_{i\in\{r+1,\dots,n\}}\,\PP(|X_{t_{i-1}}+u-\xi_j|\leq \gamma)\leq c\, \gamma.
\end{equation}
Furthermore, by 
Markov's inequality and Lemma \ref{lab2} there exists $c\in(0, \infty)$ such that for 
all $\gamma\in(0, \infty)$,
\begin{equation}\label{f3}
\max_{i\in\{1,\dots,n\}}\,\PP(|X_{t_{i-1}}-\widetilde X_{t_{i-1}}|>\gamma)\leq \max_{i\in\{1,\dots,n\}}\,\frac{\EE[|X_{t_{i-1}}-\widetilde X_{t_{i-1}}|^3]}{\gamma^3}\leq \frac{c}{\gamma^3\cdot n^{9/4}}.
\end{equation}
Choosing
\[
\gamma=n^{-\frac{9}{16}}
\]
we conclude from~\eqref{f1} to~\eqref{f3} that 
there exists $c\in(0, \infty)$ such that for 
all $u\in\R$ and all $j\in\{1, \ldots, k\}$, 
\begin{equation}\label{b4}
\max_{i\in\{r+1,\dots,n\}}\,\EE[1_{\{(X_{t_{i-1}}+u-\xi_j)\, (\widetilde X_{t_{i-1}}+u-\xi_j)\leq 0\}}]\leq c\, n^{-\frac{9}{16}}.
\end{equation}
Combining~\eqref{b1} to~\ \eqref{b3} and \eqref{b4} and observing \eqref{b5i} 
we conclude that 
there exist $c_1, c_2\in(0, \infty)$ such that 
for all $i\in\{r+1, \ldots, n\}$,
\begin{equation}\label{b9}
\EE[E_i^2]\leq c_1\, (t_i-t_{i-1})^2 \, \frac{1}{n^{9/16}}\leq \frac{c_2}{n^{5/2+1/16}}.
\end{equation}

Inserting the estimates~\eqref{uno} and~\eqref{b9} into~\eqref{nbv2} we conclude that there exists $c\in(0, \infty)$ such that for all $i\in\{r+1, \ldots, n\}$,
\[
\EE[A_i^2] \le 4\EE[C_i^2] + \frac{c}{n^{5/2+1/16}}
\]
which completes the proof of the lemma.
\end{proof}

\begin{lemma}\label{rest} Assume that $\mu$ satisfies ($\mu1$), ($\mu4$) and  ($\mu5$). Let $x_0\in\R$ and let $X,\widetilde X\colon [0,1]\times \Omega\to \R$ be strong solutions of the SDE~\eqref{generalsde} on the time-interval $[0,1]$ with initial value $x_0$ and driving Brownian motion $W$ and $\widetilde W$, respectively. Then 
there exists $c\in (0,\infty)$ such that for all $\ell\in\{1,\dots,k\}$ and all $i\in\{1,\dots,n\}$ with $t_{i}> 1/2$ it holds  
\begin{equation}\label{i3x}
\begin{aligned}
& \EE\Bigl[\Bigr(\int_{t_{i-1}}^{t_i} \bigl(\mu(X_{t_{i-1}} + W_s-W_{t_{i-1}})-\mu( X_{t_{i-1}} + \widetilde W_s-\widetilde W_{t_{i-1}})\bigr)\, ds\Bigr)^2\Bigr]\\
& \qquad\qquad\qquad \ge c\, (\mu(\xi_\ell+)-\mu(\xi_\ell-))^2\, (t_i-t_{i-1})^{5/2}.
\end{aligned}
\end{equation}
\end{lemma}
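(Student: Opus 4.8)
The plan is to recognise the left-hand side of~\eqref{i3x} as an instance of the quantity estimated in Lemma~\ref{BrBr} and then to bound from below the two probabilities appearing in that estimate. Fix $\ell\in\{1,\dots,k\}$ and $i\in\{1,\dots,n\}$ with $t_i>1/2$, and put $t=t_i-t_{i-1}>0$.

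First I would rewrite the integrand as a coupled Brownian-bridge integral. On $[t_{i-1},t_i]$ the construction in Subsection~\ref{sub22} gives $\overline W_s-W_{t_{i-1}}=\tfrac{s-t_{i-1}}{t}(W_{t_i}-W_{t_{i-1}})$, and since $\widetilde W=\overline W+\widetilde B$ and $W,\widetilde W$ agree at the grid points, the substitution $s=t_{i-1}+u$ yields
\[
\int_{t_{i-1}}^{t_i}\bigl(\mu(X_{t_{i-1}}+W_s-W_{t_{i-1}})-\mu(X_{t_{i-1}}+\widetilde W_s-\widetilde W_{t_{i-1}})\bigr)\,ds=\int_0^{t}\bigl(\mu(U+u\widehat V+\beta_u)-\mu(U+u\widehat V+\widetilde\beta_u)\bigr)\,du,
\]
where $U=X_{t_{i-1}}$, $\widehat V=\tfrac1t(W_{t_i}-W_{t_{i-1}})$, $\beta=(B_{t_{i-1}+u})_{u\in[0,t]}$ and $\widetilde\beta=(\widetilde B_{t_{i-1}+u})_{u\in[0,t]}$; the latter two are Brownian bridges on $[0,t]$ by the construction in Subsection~\ref{sub22}. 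Next I would check that $U,\widehat V,\beta,\widetilde\beta$ are independent: as shown in the proof of Lemma~\ref{mixed}, $X_{t_{i-1}}$ is independent of the Brownian motion $(W_{t_{i-1}+u}-W_{t_{i-1}})_{u\in[0,t]}$; writing the latter as $u\widehat V+\beta_u$ exhibits $\widehat V$ and $\beta$ as measurable functions of it, and the terminal value and the bridge of a Brownian motion are independent, so $U,\widehat V,\beta$ are independent; finally $\widetilde\beta$ is independent of $W$, hence of $(U,\widehat V,\beta)$, by the independence properties recorded in Subsection~\ref{sub22}.

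Then I would apply Lemma~\ref{BrBr} with this $t$, the bridges $\beta,\widetilde\beta$, the random variables $U,\widehat V$, the points $\xi_0<\dots<\xi_{k+1}$ and $h=\mu$, which is increasing by~($\mu4$) and Lipschitz on each $(\xi_{j-1},\xi_j)$ by~($\mu1$), so that hypotheses (i) and (ii) of Lemma~\ref{BrBr} are met; applying its conclusion with the index $\ell$, the identity of the previous paragraph shows that the left-hand side of~\eqref{i3x} is at least
\[
\kappa\,(\mu(\xi_\ell+)-\mu(\xi_\ell-))^2\,t^{2}\,\PP\bigl(U\in[\xi_\ell,\xi_\ell+\sqrt t]\bigr)\,\PP\bigl(\widehat V\in[0,1/\sqrt t]\bigr).
\]
For the second probability, $\sqrt t\,\widehat V=(W_{t_i}-W_{t_{i-1}})/\sqrt t$ is standard normal, so $\PP(\widehat V\in[0,1/\sqrt t])=\tfrac1{\sqrt{2\pi}}\int_0^1 e^{-x^2/2}\,dx\ge\tfrac1{\sqrt{2\pi e}}$, exactly as in~\eqref{h1}. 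For the first, I would use that~\eqref{b5i} together with $n\in2\N$ forces $t_r=1/2$ for some $r\in\{1,\dots,n\}$ (as noted in the proof of Lemma~\ref{diagonal1}); then $t_i>1/2=t_r$ gives $i>r$, hence $t_{i-1}\ge t_r=1/2$ and $t=t_i-t_{i-1}\le 1/2$, so that $[\xi_\ell,\xi_\ell+\sqrt t]\subseteq[-M,M]$ with $M:=1+\max_{1\le j\le k}|\xi_j|$. Since $\mu$ is measurable and bounded by~($\mu1$) and~($\mu5$), Lemma~\ref{Xpg} applied with $\tau=1/2$ and this $M$ provides $c_0\in(0,\infty)$, depending only on $\mu$ and $x_0$ and hence not on $n$, $i$ or $\ell$, with $\PP(X_{t_{i-1}}\in[\xi_\ell,\xi_\ell+\sqrt t])\ge c_0\sqrt t$. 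Multiplying the three lower bounds and using $t^{2}\sqrt t=(t_i-t_{i-1})^{5/2}$ then yields~\eqref{i3x} with $c=\kappa\,c_0/\sqrt{2\pi e}$.

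There is no genuine obstacle here: the analytic content is entirely contained in Lemma~\ref{BrBr} (which itself rests on the quantitative positive-association estimate of Lemma~\ref{TongIn}) and in the two-sided Gaussian density bound underlying Lemma~\ref{Xpg}, both already available, so the proof is essentially a matter of assembly. The only step calling for care is the independence bookkeeping in the second paragraph --- splitting the increment Brownian motion $(W_{t_{i-1}+u}-W_{t_{i-1}})_u$ into the mutually independent pieces $\widehat V$ and $\beta$ and verifying their joint independence with $U=X_{t_{i-1}}$ and with $\widetilde\beta$ --- but this merely repeats reasoning already carried out in the proof of Lemma~\ref{mixed} and in the setup of Subsection~\ref{sub22}.
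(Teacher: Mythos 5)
Your proposal is correct and follows essentially the same route as the paper: rewrite the increments via the piecewise linear interpolation to expose the coupled Brownian bridges, verify the independence of $X_{t_{i-1}}$, $W_{t_i}-W_{t_{i-1}}$ and the two bridges (as in the proof of Lemma~\ref{mixed}), apply Lemma~\ref{BrBr}, and bound the two resulting probabilities by the standard normal computation~\eqref{h1} and by Lemma~\ref{Xpg} with $\tau=1/2$. The only cosmetic difference is that you spell out the bridge/endpoint independence directly rather than citing the independence statement from Lemma~\ref{mixed} wholesale; the substance is identical.
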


\begin{proof}
We have for all $i\in\{1,\dots,n\}$
 and all $s\in[t_{i-1}, t_i]$,
\begin{equation}\label{tmg33}
W_s-W_{t_{i-1}}=\tfrac{s-t_{i-1}}{t_i-t_{i-1}}\,(W_{t_i}-W_{t_{i-1}})+B_s, \quad \widetilde W_s-\widetilde W_{t_{i-1}}=\tfrac{s-t_{i-1}}{t_i-t_{i-1}}\,(W_{t_i}-W_{t_{i-1}})+\widetilde B_s.
\end{equation}
Hence, for all $i\in\{1,\dots,n\}$,
\begin{align*}
&\int_{t_{i-1}}^{t_i}(\mu(X_{t_{i-1}}+W_s- W_{t_{i-1}})-\mu(X_{t_{i-1}}+\widetilde W_s-\widetilde W_{t_{i-1}})) \, ds\\
&\qquad\qquad=\int_{0}^{t_i-t_{i-1}}(\mmu(X_{t_{i-1}}+\tfrac{s}{t_i-t_{i-1}}\,(W_{t_i}-W_{t_{i-1}})+B_{t_{i-1}+s}) \\
 & \qquad\qquad \qquad\qquad \qquad\qquad -\mmu(X_{t_{i-1}}+\tfrac{s}{t_i-t_{i-1}}\,(W_{t_i}-W_{t_{i-1}})+\widetilde B_{t_{i-1}+s})) \, ds.
\end{align*}

Recall from the proof of Lemma~\ref{mixed} that for all $i\in\{1,\dots,n\}$,  $(X_{t_{i-1}},\widetilde X_{t_{i-1}})$ and $(( W_s- W_{t_{i-1}},\widetilde W_s-\widetilde W_{t_{i-1}})_{s\in [t_{i-1},t_i]}$ are independent. Hence, for all $i\in\{1,\dots,n\}$, 
 $X_{t_{i-1}}$, $ W_{t_i}-W_{t_{i-1}}$, $(B_{t_{i-1}+s})_{s\in[0, t_i-t_{i-1}]}$, $(\widetilde B_{t_{i-1}+s})_{s\in[0, t_i-t_{i-1}]}$ are independent.
We may thus apply Lemma \ref{BrBr} to obtain  that for all $\ell\in\{1,\dots,k\}$ and for all $i\in\{1,\dots,n\}$,
\begin{equation}\label{tmg24}  
\begin{aligned}
& \EE\Bigl[\Bigr(\int_{t_{i-1}}^{t_i} \bigl(\mu(X_{t_{i-1}} + W_s-W_{t_{i-1}})-\mu( X_{t_{i-1}} + \widetilde W_s-\widetilde W_{t_{i-1}})\bigr)\, ds\Bigr)^2\Bigr]\\ & \qquad \qquad\geq \kappa\, (\mmu(\xi_\ell+)-\mmu(\xi_\ell-))^2 \, (t_i-t_{i-1})^2 \\
& \qquad\qquad\qquad\qquad \times \PP(X_{t_{i-1}}\in [\xi_\ell, \xi_\ell+\sqrt {t_i-t_{i-1}}])\, \PP(W_{t_i}-W_{t_{i-1}}\in [0, \sqrt {t_i-t_{i-1}}]),
\end{aligned}
\end{equation}
where
$\kappa$ is given by \eqref{kappa}.

Due to the assumptions $n\in2\N$ and~\eqref{b5i} there exists $r\in\{1,\dots,n\}$ with $t_{r}= 1/2$. Using Lemma~\ref{Xpg} with $\tau=1/2$, $M=\max_{\ell=1,\dots,k}|\xi_\ell|+1$ 
we see that 
there exists $c\in(0, \infty)$ such that for all $\ell\in\{1,\dots,k\}$ and   all   $i\in\{r+1, \ldots, n\}$,
\begin{equation}\label{tmg26}
\PP(X_{t_{i-1}}\in [\xi_\ell, \xi_\ell+\sqrt {t_i-t_{i-1}}])\geq c\, (t_i-t_{i-1})^{1/2}.
\end{equation}

   Combining~\eqref{tmg24} with~\eqref{h1} and~\eqref{tmg26} completes the proof of the lemma.
\end{proof}

We are ready to provide the appropriate  lower bound for the right hand side of~\eqref{gL1}.

\begin{lemma}\label{llbb}
Assume that $\mu$ satisfies ($\mu1$) to ($\mu5$). Let $x_0\in\R$ and let $X,\widetilde X\colon [0,1]\times \Omega\to \R$ be strong solutions of the SDE~\eqref{generalsde} on the time-interval $[0,1]$ with initial value $x_0$ and driving Brownian motion $W$ and $\widetilde W$, respectively.
Then 
there exist $c_1,c_2\in (0,\infty)$ such that 
\begin{equation}\label{aux11}
\EE[|X_1-\widetilde X_1|]\ge \frac{c_1}{n^{3/4}}\,(\max(0,(1-c_2/n^{1/16})))^{3/2}.
\end{equation}
\end{lemma}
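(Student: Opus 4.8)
The plan is to first obtain the second moment bound $\EE[|X_1-\widetilde X_1|^2]\ge \tfrac{c_3}{n^{3/2}}\,\max(0,1-c_4/n^{1/16})$ and then to upgrade it to the first moment by interpolating against the fourth moment, which is controlled via Lemma~\ref{lab2}. To this end I would first reduce the second moment to a sum of diagonal terms. Using \eqref{Brm} and $X_0=\widetilde X_0=x_0$ one has, for every $j\in\{0,\dots,n\}$, that $X_{t_j}-\widetilde X_{t_j}=\int_0^{t_j}(\mu(X_s)-\mu(\widetilde X_s))\,ds=\sum_{i=1}^j G_i$ with $G_i:=\int_{t_{i-1}}^{t_i}(\mu(X_s)-\mu(\widetilde X_s))\,ds$; in particular $X_1-\widetilde X_1=\sum_{i=1}^n G_i$ and $X_{t_{i-1}}-\widetilde X_{t_{i-1}}=\sum_{j<i}G_j$. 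Expanding the square,
\[
\EE[|X_1-\widetilde X_1|^2]=\sum_{i=1}^n\EE[G_i^2]+2\sum_{i=1}^n\EE\Bigl[(X_{t_{i-1}}-\widetilde X_{t_{i-1}})\int_{t_{i-1}}^{t_i}(\mu(X_s)-\mu(\widetilde X_s))\,ds\Bigr],
\]
and by Lemma~\ref{mixed} (the only place where ($\mu4$) is used here) every summand in the second sum is nonnegative, so that $\EE[|X_1-\widetilde X_1|^2]\ge\sum_{i=1}^n\EE[G_i^2]\ge\sum_{i:\,t_i>1/2}\EE[G_i^2]$.

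Next I would bound the diagonal terms from below. Choose $\ell\in\{1,\dots,k\}$ with $\mu(\xi_\ell+)\ne\mu(\xi_\ell-)$, which exists by ($\mu3$), and set $\lambda:=(\mu(\xi_\ell+)-\mu(\xi_\ell-))^2>0$. For every $i$ with $t_i>1/2$, combining Lemma~\ref{diagonal1} and Lemma~\ref{rest} yields a constant $c$ independent of $n$ and of $t_1,\dots,t_n$ such that $\EE[G_i^2]\ge\tfrac14\,c\,\lambda\,(t_i-t_{i-1})^{5/2}-\tfrac{c}{n^{5/2+1/16}}$. By \eqref{b5i} (and $n\in2\N$) there is $r$ with $t_r=1/2$, so $\{i:t_i>1/2\}=\{r+1,\dots,n\}$ has at most $n$ elements and $\sum_{i=r+1}^n(t_i-t_{i-1})=1/2$; the power‑mean (Hölder) inequality as in \eqref{oo33} gives $\sum_{i=r+1}^n(t_i-t_{i-1})^{5/2}\ge 2^{-5/2}n^{-3/2}$, while the subtracted terms total at most $n\cdot\tfrac{c}{n^{5/2+1/16}}=\tfrac{c}{n^{3/2+1/16}}$. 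Hence there are $c_3,c_4\in(0,\infty)$ with $\EE[|X_1-\widetilde X_1|^2]\ge\tfrac{c_3}{n^{3/2}}-\tfrac{c_3c_4}{n^{3/2+1/16}}=\tfrac{c_3}{n^{3/2}}(1-c_4/n^{1/16})$, and since the left‑hand side is nonnegative, $\EE[|X_1-\widetilde X_1|^2]\ge\tfrac{c_3}{n^{3/2}}\max(0,1-c_4/n^{1/16})$.

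Finally I would interpolate. Since $t_n=1$, Lemma~\ref{lab2} with $p=4$ gives $\EE[|X_1-\widetilde X_1|^4]\le c\,n^{-3}$, and Hölder's inequality, applied exactly as in \eqref{star1}, yields $\EE[|X_1-\widetilde X_1|^2]\le\EE[|X_1-\widetilde X_1|]^{2/3}\,\EE[|X_1-\widetilde X_1|^4]^{1/3}$, whence $\EE[|X_1-\widetilde X_1|]\ge\EE[|X_1-\widetilde X_1|^2]^{3/2}/\EE[|X_1-\widetilde X_1|^4]^{1/2}$. Substituting the bounds from the previous two paragraphs gives \eqref{aux11} with $c_1=c_3^{3/2}/c^{1/2}$ and $c_2=c_4$.

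There is no genuinely new estimate in this lemma: the substance is entirely in Lemmas~\ref{mixed}, \ref{diagonal1}, \ref{rest} and \ref{lab2}. The step that needs the most care is the bookkeeping of the error term of Lemma~\ref{diagonal1}: it is of order $n^{-5/2-1/16}$ per subinterval and there are of order $n$ relevant subintervals, so after summation it eats only a factor $1-c_4n^{-1/16}$ out of the main term of order $n^{-3/2}$ — this is precisely why the truncation $\max(0,\cdot)$ appears and, after the $L^2\to L^1$ interpolation, why it is raised to the power $3/2$. One must also be careful to discard the cross terms with the correct sign (Lemma~\ref{mixed}, where ($\mu4$) is essential) and to note that $\{i:t_i>1/2\}$ is exactly $\{r+1,\dots,n\}$.
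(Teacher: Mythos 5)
Your proposal is correct and follows essentially the same route as the paper: the same decomposition of $\EE[|X_1-\widetilde X_1|^2]$ into nonnegative cross terms (Lemma~\ref{mixed}) plus diagonal terms bounded below via Lemmas~\ref{diagonal1} and~\ref{rest}, the same H\"older step~\eqref{oo33} for $\sum(t_i-t_{i-1})^{5/2}$, and the same $L^2$-to-$L^1$ interpolation via Lemma~\ref{lab2} with $p=4$. The only cosmetic difference is that you expand the square of $\sum_i G_i$ directly rather than writing the telescoping recursion $\Delta_i=\Delta_{i-1}+2m_i+d_i$; these are the same computation.
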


\begin{proof} 
Put
\[
\Delta_i=\EE[|X_{t_i}-\widetilde X_{t_i}|^2]
\]
for $i\in\{0, \ldots, n\}$ and note
that 
\begin{equation}\label{LL5}
\EE[|X_1-\widetilde X_1|^2]=\Delta_n.
\end{equation}
Observing~\eqref{Brm} we see that  
for all $i\in\{1, \ldots, n\}$,
\begin{equation}\label{De}
\Delta_i=\EE\Bigl[\Bigl|X_{t_{i-1}}-\widetilde X_{t_{i-1}}+\int_{t_{i-1}}^{t_i}(\mu(X_s)-\mu(\widetilde X_s)) \, ds\Bigr|^2\Bigr]=\Delta_{i-1} +2m_i+d_i,
\end{equation}
where 
\[
m_i=\EE\Bigl[(X_{t_{i-1}}-\widetilde X_{t_{i-1}})\,\int_{t_{i-1}}^{t_i}(\mu(X_s)-\mu(\widetilde X_s)) \, ds\Bigr] \,\text{ and }\, d_i=\EE\Bigl[\Bigl|\int_{t_{i-1}}^{t_i}(\mu(X_s)-\mu(\widetilde X_s)) \, ds\Bigr|^2\Bigr].
\]

Using Lemma~\ref{mixed} we have for all $i\in\{1,\dots,n\}$ that
$m_i\ge 0$. By~\eqref{b5i} there exists $r\in\{0, \ldots, n\}$ such that $t_r=1/2$. Combining Lemma~\ref{diagonal1} and Lemma~\ref{rest} and observing property ($\mu3$) we conclude that there exist $c_1,c_2\in (0,\infty)$ such that for $i\in\{r+1,\dots,n\}$, 
\[
d_i \ge c_1\, (t_i-t_{i-1})^{5/2}  - c_2\,\frac{1}{n^{5/2+1/16}}.
\]
Hence, observing \eqref{oo33} we obtain that there exists $c_3\in(0, \infty)$ such that
\begin{equation}\label{uzt1}
\begin{aligned}
\EE[|X_1-\widetilde X_1|^2] &= 2\sum_{i=1}^n m_i+\sum_{i=1}^n d_i \ge \sum_{i=1}^n d_i\ge \sum_{i=r+1}^n d_i \\
& \ge c_1\sum_{i=r+1}^n(t_i-t_{i-1})^{5/2}  - \frac{c_2}{n^{3/2+1/16}}\ge \frac{c_3}{n^{3/2}} - \frac{c_2}{n^{3/2+1/16}}\\
&= \frac{c_3}{n^{3/2}}\cdot \Bigl(1-\frac{c_2}{c_3}\cdot \frac{1}{n^{1/16}}\Bigr).
\end{aligned}
\end{equation}

Employing  Lemma~\ref{lab2} with $p=4$ we may proceed similar to the end of the proof of Theorem~\ref{Thm2}, see~\eqref{star1} and~\eqref{star2}, to conclude with the help of 
H\"older's inequality that 
 there exist $c_1,c_2,c_3\in (0,\infty)$ such that
\[
\frac{c_1}{n^{3/2}}\cdot \max(0,(1-c_2/n^{1/16}))\le \EE[|X_1-\widetilde X_1|^2]\le \EE[|X_1-\widetilde X_1|]^{2/3}\,\frac{c_3}{n},
\] 
which implies that  
there exist $c_1,c_2\in (0,\infty)$ such that
\begin{equation}\label{aux12}
\EE[|X_1-\widetilde X_1|] \ge \frac{c_1}{n^{3/4 }}\,(\max(0,(1-c_2/n^{1/16})))^{3/2}
\end{equation}
and hereby finishes the proof of the lemma.
\end{proof}

Combining Lemma~\ref{lemmanew02} with $p=1$ and Lemma~\ref{llbb} yields Proposition~\ref{unprop} and hereby finishes the proof of Theorem~\ref{Thm1}.

\section*{Appendix}\label{app}

It is well-known that the components of a bivariate 
normal random variable $(Z,Y)$ with  $\text{Cov}(Z,Y)\ge 0$ are positively associated, whence, in particular, $\text{Cov}(f(Z),g(Y))\ge 0$ holds for all 
increasing 
$f,g\colon\R\to\R$ such that $\text{Cov}(f(Z),g(Y))$
 exists. See, e.g.~\cite[Theorem 5.1.1]{To90}. The following lemma strengthens this result in the case when $f$ and $g$ are piecewise Lipschitz continuous.

\begin{lemma}\label{TongIn}
Let $\rho\in[0,1]$ and $
(Z,Y)\sim N\Bigl(0, \Bigl({\small 
\begin{array}{rr}1 & \rho \\ \rho & 1 \\\end{array}}\Bigr)\Bigr)
$.  Moreover, let $k,l\in\N$ and $-\infty=a_0<a_1<\ldots<a_k<a_{k+1}=\infty$ and $-\infty=b_0<b_1<\ldots<b_l<b_{l+1}=\infty$, and let $f,g\colon\R\to\R$ satisfy 
\begin{itemize}
\item[(i)] $f,g$ are both increasing or both decreasing,
\item[(ii)] $f$ is Lipschitz continuous on the interval $(a_{i-1}, a_i)$  for all $i\in\{1, \ldots, k+1\}$ and $g$ is Lipschitz 
continuous on the interval $(b_{j-1}, b_j)$  for all $j\in\{1, \ldots, l+1\}$.
\end{itemize}
Then it holds
\begin{equation}\label{k2}
\begin{aligned}
&\EE[f(Z)\,g(Y)]-\EE[f(Z)]\, \EE[g(Y)]\\
&\quad \geq\sum_{i=1}^{k} \sum_{j=1}^{l} (f(a_i+)-f(a_i-))\, (g(b_j+)-g(b_j-)) \,  \frac{1}{2\pi}\,e^{-\frac{a_i^2}{2}}\, \int_0^{\rho} \frac{1}{\sqrt{1-u^2}}\, e^{-\frac{(b_j-a_iu)^2}{2 (1-u^2)}} du.
\end{aligned}
\end{equation}
\end{lemma}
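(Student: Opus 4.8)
The plan is to peel off the jump parts of $f$ and $g$ and reduce everything to positive association of bivariate normals together with Plackett's differentiation identity for the joint survival function. Replacing $(f,g)$ by $(-f,-g)$ changes neither $\text{Cov}(f(Z),g(Y))$ nor any product $(f(a_i+)-f(a_i-))(g(b_j+)-g(b_j-))$, so by (i) we may assume $f,g$ are increasing; by (ii) and the argument of Lemma~\ref{basics}, $f$ and $g$ then have linear growth, the one-sided limits $f(a_i\pm),g(b_j\pm)$ exist, and all jumps $f(a_i+)-f(a_i-)$, $g(b_j+)-g(b_j-)$ are nonnegative. Write $f=f_c+f_j$, $g=g_c+g_j$ with
\[
f_j=\sum_{i=1}^{k}(f(a_i+)-f(a_i-))\,1_{(a_i,\infty)},\qquad g_j=\sum_{j=1}^{l}(g(b_j+)-g(b_j-))\,1_{(b_j,\infty)}.
\]
A direct check shows that $f_c=f-f_j$ is continuous and increasing (on each interval $(a_{i-1},a_i)$ it differs from $f$ by an additive constant, and its one-sided limits at every $a_i$ coincide) and of linear growth, and likewise for $g_c$. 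Hence $f_c(Z),g_c(Y),f_j(Z),g_j(Y)\in L^2$, so by bilinearity
\[
\text{Cov}(f(Z),g(Y))=\text{Cov}(f_c(Z),g_c(Y))+\text{Cov}(f_c(Z),g_j(Y))+\text{Cov}(f_j(Z),g_c(Y))+\text{Cov}(f_j(Z),g_j(Y)).
\]
Each of the first three terms is the covariance of two increasing functions of the nonnegatively correlated Gaussian pair $(Z,Y)$ and is therefore $\ge 0$ by positive association, see \cite[Theorem 5.1.1]{To90} (for the unbounded factors $f_c,g_c$ one truncates and passes to the limit; alternatively, conditioning on $Z$ reduces each of these three terms to a covariance of two increasing functions of the single variable $Z$, which is $\ge 0$).

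It therefore suffices to show that the jump--jump term alone equals the right-hand side of \eqref{k2}. Since $f_j(Z)=\sum_i(f(a_i+)-f(a_i-))1_{(a_i,\infty)}(Z)$ and similarly for $g_j(Y)$,
\[
\text{Cov}(f_j(Z),g_j(Y))=\sum_{i=1}^{k}\sum_{j=1}^{l}(f(a_i+)-f(a_i-))(g(b_j+)-g(b_j-))\,\bigl[\PP(Z>a_i,\,Y>b_j)-\PP(Z>a_i)\,\PP(Y>b_j)\bigr].
\]
For fixed $a,b\in\R$ and $u\in[0,1]$ let $(Z_u,Y_u)$ be centered bivariate normal with variances $1$ and correlation $u$, let $\phi_u$ be its density, and set $H(u)=\PP(Z_u>a,\,Y_u>b)$. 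Differentiating under the integral sign (legitimate uniformly for $u$ in compact subsets of $[0,1)$) and using the heat-type identity $\partial_u\phi_u(x,y)=\partial_x\partial_y\phi_u(x,y)$ for the bivariate normal density, one obtains Plackett's identity $H'(u)=\phi_u(a,b)$ for $u\in[0,1)$; since $H$ is continuous on $[0,1]$ this persists at $u=1$. As $H(0)=\PP(Z>a)\,\PP(Y>b)$, integrating from $0$ to $\rho$ gives
\[
\PP(Z>a,\,Y>b)-\PP(Z>a)\,\PP(Y>b)=\int_0^{\rho}\phi_u(a,b)\,du\ \ge\ 0 .
\]

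Finally, the algebraic identity $\dfrac{a^2-2uab+b^2}{1-u^2}=a^2+\dfrac{(b-au)^2}{1-u^2}$ yields
\[
\phi_u(a,b)=\frac{1}{2\pi\sqrt{1-u^2}}\,e^{-a^2/2}\,e^{-\frac{(b-au)^2}{2(1-u^2)}},
\]
so that $\int_0^{\rho}\phi_u(a_i,b_j)\,du$ is exactly the $(i,j)$ summand on the right-hand side of \eqref{k2}. Combining this with the displayed expansion of $\text{Cov}(f_j(Z),g_j(Y))$ and the nonnegativity of the remaining three covariance terms proves \eqref{k2}. The only point demanding genuine care is the rigorous justification of Plackett's identity (differentiation under the integral together with the density PDE and the endpoint $u=\rho=1$); everything else is bookkeeping.
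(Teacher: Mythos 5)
Your proof is correct, but it is organized quite differently from the paper's. The paper keeps $f$ and $g$ whole: it sets $\psi(u)=\EE[f(Z_u)g(Y_u)]$, differentiates in the correlation parameter, rewrites $\partial_u h$ of the conditional Gaussian kernel via the identity $\partial_u h=-\tfrac1u(\partial_z^2h-z\,\partial_zh)$, and integrates by parts twice (in $z$, then in $y$); the jumps of $f$ and $g$ appear as boundary terms, which produce exactly the right-hand side of \eqref{k2}, while the absolutely continuous parts contribute the nonnegative remainders $w(u)$ and the $f_i',g_j'$ integrals. You instead split $f=f_c+f_j$, $g=g_c+g_j$ first, dispatch the three covariances involving a continuous factor by positive association (or by the conditioning/one-dimensional Chebyshev argument you sketch, which is fine), and compute the jump--jump term exactly via Plackett's identity $\partial_u\PP(Z_u>a,Y_u>b)=\phi_u(a,b)$; your algebra confirming $\int_0^\rho\phi_u(a_i,b_j)\,du$ equals the $(i,j)$ summand is correct. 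Both arguments rest on the same Gaussian heat-type identity (the paper's kernel relation is $\partial_u\phi_u=\partial_x\partial_y\phi_u$ in disguise), but your version is more modular and makes transparent that the lower bound is precisely $\mathrm{Cov}(f_j(Z),g_j(Y))$, at the cost of importing positive association for unbounded increasing functions as an external ingredient; the paper's version is self-contained. One small point to tidy up: with $f_j=\sum_i(f(a_i+)-f(a_i-))1_{(a_i,\infty)}$, the function $f_c=f-f_j$ need not be increasing or continuous \emph{at} the points $a_i$ themselves (e.g.\ $f=1_{[0,\infty)}$ gives $f_c=1_{\{0\}}$); since $\{a_1,\dots,a_k\}$ is Lebesgue-null and $Z$, $Y$ have densities, you should first redefine $f(a_i):=f(a_i-)$ (and similarly for $g$), which changes neither side of \eqref{k2}, and then your "direct check" goes through. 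With that adjustment, and the standard care you already flag for differentiating under the integral and for the endpoint $\rho=1$ (integrate on $[0,1-\eps]$ and let $\eps\downarrow0$, exactly as the paper does with its auxiliary variable $V_s$), the argument is complete.
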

\begin{proof} 
Clearly, we may assume that $f$ and $g$ are both increasing.
Employing the assumption (ii) and Lemma~\ref{basics} we see that there exists $c\in (0,\infty)$ such that for all $x\in\R$,
\begin{equation}\label{z1}
|f(x)| + |g(x)| \le c\cdot (1+|x|),
\end{equation}
and that all of the limits $f(a_i-), f(a_i+), g(b_j-), g(b_j+)$, $i\in\{1,\dots,k\},j\in\{1,\dots,l\}$ exist and are finite. 
Hence all of the 
expected values
on the left hand side of~\eqref{k2} are well-defined and finite, and the right hand side of~\eqref{k2} is well-defined as well.

 Clearly, \eqref{k2} holds if $\rho=0$. 
Next, assume that $\rho\in(0,1)$. We proceed similar to the proof of~\cite[Theorem 5.1.1]{To90}.
For $u\in[0, \rho]$ let 
\[
(Z_u,Y_u)\sim N\Bigl(0, \Bigl({\small \begin{array}{rr}1 & u  \\ u & 1  \\\end{array}}\Bigr)\Bigr)
\]
and define $\psi\colon[0, \rho]\to[0, \infty)$ by
\[
\psi(u)=\EE[f(Z_u)\, g(Y_u)].
\]
Then
\begin{equation}\label{d11}
\EE[f(Z)\, g(Y)]-\EE[f(Z)]\, \EE[g(Y)]=\psi(\rho)-\psi(0).
\end{equation}

Using the well-known fact that 
for all  $u\in [0,\rho]$
the conditional distribution of $Y_u$ given $Z_u$ satisfies 
$\PP^{Y_u|Z_u=z} = N(zu,1-u^2)$ for $\PP^{Z_u}$-almost all $z\in\R$, 
    we obtain for all $u\in[0, \rho]$ that
\[
\psi(u)=\int_{\R}\int_{\R} f(z)\, g(y)\, h(z,y,u)\,\varphi(z)\, dy\, dz,
\]
where the functions $h\colon\R^2\times [0, \rho]\to\R$ and $\varphi\colon\R\to\R$ are defined by
\[
h(z,y,u)= \frac{1}{\sqrt{2\pi(1-u^2)}}\,e^{-\tfrac{(y-zu)^2}{2(1-u^2)}},\quad \varphi(z)=\frac{1}{\sqrt{2\pi}}\,e^{-\tfrac{z^2}{2}}.
\]
Let $z,y\in\R$. For all $u\in[0, \rho]$,
\[
\frac{\partial}{\partial u}h(z,y,u)=\Bigl(\frac{u}{1-u^2}-\frac{(y-zu)(yu-z)}{(1-u^2)^2}\Bigr)\, h(z,y,u).
\]
Since
\[
\sup_{u\in[0, \rho]} e^{-\tfrac{(y-zu)^2}{2(1-u^2)}}\leq 
\sup_{u\in[0, \rho]}
e^{-\tfrac{(y-zu)^2}{2}}\leq e^{-\tfrac{y^2}{2}+|yz|\rho}\leq e^{-\tfrac{y^2(1-\rho)}{2}+\tfrac{z^2\rho}{2}}
\]
we obtain that
\[
\sup_{u\in[0, \rho]}\Bigl|\frac{\partial}{\partial u}h(z,y,u)\Bigr|\leq \Bigl(\frac{1}{1-\rho^2}+\frac{(|y|+|z|)^2}{(1-\rho^2)^2}\Bigr)\,  \frac{1}{\sqrt{2\pi(1-\rho^2)}}\, e^{-\tfrac{y^2(1-\rho)}{2}+\tfrac{z^2\rho}{2}}.
\]
By the latter fact and~\eqref{z1} we may 
apply the dominated convergence theorem to conclude that $\psi$ is continuously differentiable with
\begin{equation}\label{n3}
\psi'(u)=\int_{\R}\int_{\R} f(z)\, g(y)\, \frac{\partial}{\partial u}h(z,y,u)\,\varphi(z)\, dy\, dz, \quad u\in[0, \rho].
\end{equation}
Below we show that for all $u\in(0, \rho]$,
\begin{equation}\label{n1}
\psi'(u)\geq \sum_{i=1}^{k} \sum_{j=1}^{l} (f(a_i+)-f(a_i-))\, (g(b_j+)-g(b_j-)) \, \varphi(a_i) \, h(a_i,b_j,u).
\end{equation}
The latter estimate, the equality 
\[
\psi(\rho)-\psi(0)=\int_0^\rho \psi'(u)\,du
\]
and \eqref{d11} imply~\eqref{k2}.

It remains to prove \eqref{n1}. Straightforward calculations show that for all $z,y\in\R$ and all $u\in[0, \rho]$,
\[
\frac{\partial}{\partial z}h(z,y,u)=\frac{u(y-zu)}{1-u^2}\, h(z,y,u), \qquad \frac{\partial^2}{\partial z^2}h(z,y,u)=\Bigl(\frac{u^2(y-zu)^2}{(1-u^2)^2}-\frac{u^2}{1-u^2}\Bigr)\, h(z,y,u),
\]
and therefore for all $z,y\in\R$ and all $u\in(0, \rho]$,
\[
\frac{\partial}{\partial u}h(z,y,u)=-\frac{1}{u}\,\Bigl(\frac{\partial^2}{\partial z^2}h(z,y,u)-z \frac{\partial}{\partial z}h(z,y,u)\Bigr).
\]
Observing \eqref{n3} we may thus conclude that for all $u\in(0, \rho]$,
\begin{equation}\label{x}
\psi'(u)=-\frac{1}{u}\,\int_{\R} g(y)\int_{\R} f(z)\,\varphi(z)\, \Bigl(\frac{\partial^2}{\partial z^2}h(z,y,u)-z \frac{\partial}{\partial z}h(z,y,u)\Bigr)\, dz\, dy.
\end{equation}

Let $f_0\colon (-\infty, a_1]\to\R$, $f_1\colon [a_{1}, a_2]\to\R, \ldots, f_{k}\colon [a_k, \infty)\to\R$  denote
 the continuous extensions of $f_{|(-\infty, a_1)}$, $f_{|(a_{1}, a_2)}, \ldots,f_{|(a_k, \infty)}$ on $(-\infty, a_1]$, $[a_{1}, a_2], \ldots, [a_k, \infty)$, respectively.  
The assumption (ii) implies that for every $i\in\{0, \ldots, k\}$ the function $f_i$ is Lipschitz continuous on its domain and therefore has a Lebesgue density $f_i'$.
Applying the integration by parts formula and observing that $\varphi'(z)=-z \varphi(z)$ for all $z\in\R$ 
as well as
\[
\lim_{z\to -\infty} f(z)\,\varphi(z)\,\frac{\partial}{\partial z}h(z,y,u) = \lim_{z\to \infty} f(z)\,\varphi(z)\,\frac{\partial}{\partial z}h(z,y,u) = 0
\]
for all $y\in\R$ and all $u\in [0,\rho]$,
we therefore obtain that for all $y\in\R$ and all $u\in(0, \rho]$,
\begin{equation}\label{xx}
\begin{aligned}
&\int_{\R} f(z)\,\varphi(z)\, \frac{\partial^2}{\partial z^2}h(z,y,u)\, dz\\
&\qquad\quad\quad=\sum_{i=0}^{k}\int_{a_i}^{a_{i+1}} f_i(z)\,\varphi(z)\, \frac{\partial^2}{\partial z^2}h(z,y,u)\, dz\\
&\qquad\quad\quad= \sum_{i=1}^{k}(f(a_i-)-f(a_i+))\, \varphi(a_i)\, \frac{\partial}{\partial z} h(a_i,y,u)
-\sum_{i=0}^{k}\int_{a_i}^{a_{i+1}} (f_i\cdot\varphi)'(z)\,  \frac{\partial}{\partial z}h(z,y,u)\, dz\\
&\qquad\quad\quad=\sum_{i=1}^{k}(f(a_i-)-f(a_i+))\, \varphi(a_i)\, \frac{\partial}{\partial z} h(a_i,y,u)
-\sum_{i=0}^{k}\int_{a_i}^{a_{i+1}} f_i'(z)\,\varphi(z)\,  \frac{\partial}{\partial z}h(z,y,u)\, dz\\
&\qquad\quad\qquad\quad\quad +\int_{\R} f(z)\,\varphi(z)\,  z\frac{\partial}{\partial z}h(z,y,u)\, dz.
\end{aligned}
\end{equation}
Using~\eqref{x} and~\eqref{xx} we see that for  all $u\in(0, \rho]$,
\begin{equation}\label{n8}
\psi'(u)=v(u)+w(u),
\end{equation}
where the functions $v,w\colon (0, \rho]\to\R$ are defined by
\begin{align*}
v(u)&=\sum_{i=1}^{k}(f(a_i+)-f(a_i-))\, \varphi(a_i)\, \frac{1}{u}\int_{\R} g(y)\, \frac{\partial}{\partial z} h(a_i,y,u)\, dy,\\
w(u)&= \sum_{i=0}^{k}\int_{a_i}^{a_{i+1}}f_i'(z)\,\varphi(z)\, \Bigl(\frac{1}{u}\int_{\R} g(y) \frac{\partial}{\partial z}h(z,y,u) \, dy\Bigr) \, dz.
\end{align*}
Note that for all $z,y\in\R$ and $u\in[0, \rho]$,
\begin{equation}\label{z2}
\frac{\partial}{\partial z}h(z,y,u)=-u\, \frac{\partial}{\partial y} h(z,y,u).
\end{equation}
Let $g_0\colon (-\infty, b_1]\to\R$, $g_1\colon [b_{1}, b_2]\to\R, \ldots, g_{l}\colon [b_l, \infty)\to\R$  denote
 the continuous extensions of $g_{|(-\infty, b_1)}$, $g_{|(b_{1}, b_2)}, \ldots,g_{|(b_l, \infty)}$ on $(-\infty, b_1]$, $[b_{1}, b_2], \ldots, [b_l, \infty)$, respectively. 
The assumption (ii) implies that for every $j\in\{0, \ldots, l\}$ the funtion $g_j$ is Lipschitz continuous on its domain and therefore has a Lebesgue density $g_j'$.
   Using~\eqref{z2}, applying the integration by parts formula 
and observing that
\[
\lim_{y\to -\infty} g(y)\,h(z,y,u) = \lim_{y\to \infty} g(y)\,h(z,y,u) = 0
\]
for all $z\in\R$ and all $u\in [0,\rho]$,
we therefore obtain that for all $z\in\R$ and all $u\in(0, \rho]$,
\begin{align*}
\frac{1}{u}\int_{\R} g(y)\, \frac{\partial}{\partial z} h(z,y,u)\, dy&=- \int_{\R} g(y)\, \frac{\partial}{\partial y} h(z,y,u)\, dy=- \sum_{j=0}^{l} \int_{b_j}^{b_{j+1}} g_j(y)\, \frac{\partial}{\partial y} h(z,y,u)\, dy\\
&=\sum_{j=1}^{l} (g(b_j+)-g(b_j-))\, h(z,b_j,u)+\sum_{j=0}^{l} \int_{b_j}^{b_{j+1}} g_j'(y)\, h(z,y,u)\, dy.
\end{align*}
Since $f$ and $g$ are both increasing
we may assume that $f_i',g_j'\geq 0$ for all $i\in\{0, \ldots, k\}$ and $j\in\{0,\dots,l\}$ and  we conclude 
that for all $u\in(0, \rho]$,
\[
v(u)\geq \sum_{i=1}^{k} \sum_{j=1}^{l} (f(a_i+)-f(a_i-))\, (g(b_j+)-g(b_j-)) \, \varphi(a_i) \, h(a_i,b_j,u)
\]
and
\begin{align*}
w(u)&\geq \sum_{j=1}^{l} (g(b_j+)-g(b_j-)) \sum_{i=0}^{k}\int_{a_i}^{a_{i+1}}f_i'(z)\,\varphi(z) \, h(z,b_j,u)\, dz\geq 0.
\end{align*}
The latter two estimates together with \eqref{n8} yield \eqref{n1} and complete the proof of the lemma in the case $\rho\in(0,1)$.

Finally, assume that $\rho=1$. Then $Z=Y$ $\PP$-a.s. Let $U\sim N(0, 1)$ be independent of $Z$ and for $s\in[0,1)$ put
\[
V_s= s\, Z+\sqrt{1-s^2} \, U.
\]
 Observe that \[
(Z,V_s)\sim N\Bigl(0, \Bigl({\small \begin{array}{rr}1 & s  \\ s  & 1  \\\end{array}}\Bigr)\Bigr)
\]  for all $s\in[0,1)$ 
and that $g$ has at most finitely many discontinuity points.
Hence, $\PP$-a.s.,
 \[
 \lim_{s\to 1}g(V_s)=g(Y). 
 \]
Observing~\eqref{z1} we may thus 
apply the dominated convergence theorem to conclude
\[
\EE[f(Z)\, g(Y)]-\EE[f(Z)]\, \EE[g(Y)]= \lim_{s\to 1} \bigl(\EE[f(Z)\, g(V_s)]-\EE[f(Z)]\, \EE[g(V_s)]\bigr).
\]
Applying \eqref{k2} with $Y=V_s$ for $s\in[0,1)$ and using the fact that for all $a,b\in\R$,                   
\begin{align*}
\lim_{s\to 1} \int_0^{s} \frac{1}{\sqrt{1-u^2}}\, e^{-\frac{(b-au)^2}{2 (1-u^2)}} du=\int_0^{1} \frac{1}{\sqrt{1-u^2}}\, e^{-\frac{(b-au)^2}{2 (1-u^2)}} du
\end{align*}
 finishes the proof of \eqref{k2} in the case $\rho=1$ and completes the proof of the lemma.
\end{proof}

\bibliographystyle{acm}
\bibliography{bibfile}

\end{document}